\documentclass[11pt,twoside,reqno]{amsart}
\usepackage[utf8]{inputenc}
\usepackage[T1]{fontenc}
\usepackage{import}
\usepackage{newclude}
\usepackage{amsmath,amssymb,amsthm,mathtools}
\usepackage[toc,page]{appendix}
\usepackage{longtable,booktabs,array}
\usepackage{anysize}
\usepackage{amscd}
\usepackage{stmaryrd}
\usepackage{wasysym}
\usepackage{mathrsfs}
\usepackage[scr=boondox]{mathalfa}
\usepackage{enumitem}
\usepackage{accents}
\usepackage{dsfont}
\usepackage{soul}
\usepackage{color}
\usepackage[all]{xy}
\usepackage{float}
\usepackage{bm}
\usepackage{makecell}
\usepackage{thmtools}      
\usepackage{setspace}
\usepackage{comment}
\usepackage{tikz}
\usepackage{tikz-cd}
\usepackage{mathdots}
\usepackage{mathtools}
\usepackage{graphicx}
\usepackage[myheadings]{fullpage}
\usepackage{bbm}

\usepackage[hypertexnames=false,colorlinks,linkcolor=black,citecolor=black]{hyperref}
\usepackage[capitalize,nameinlink]{cleveref}

\crefname{appendix}{Appendix}{Appendices}
\Crefname{appendix}{Appendix}{Appendices}

\numberwithin{equation}{section}
\newcommand\mtop{.95in}
\newcommand\mbottom{.95in}
\newcommand\mleft{1in}
\newcommand\mright{1in}
\usepackage[top = \mtop, bottom = \mbottom, left = \mleft, right=\mright]{geometry}

\DeclareMathOperator{\Mat}{Mat}

\newtheorem{thm}{Theorem}[section]

\newtheorem{prop}[thm]{Proposition}

\newtheorem{lemma}[thm]{Lemma}

\theoremstyle{definition}
\newtheorem{defi}[thm]{Definition}

\newtheorem{rmk}[thm]{Remark}

\newcommand\reallywidehat[1]{%
\savestack{\tmpbox}{\stretchto{%
  \scaleto{%
    \scalerel*[\widthof{\ensuremath{#1}}]{\kern-.6pt\bigwedge\kern-.6pt}%
    {\rule[-\textheight/2]{1ex}{\textheight}}%WIDTH-LIMITED BIG WEDGE
  }{\textheight}% 
}{0.5ex}}%
\stackon[1pt]{#1}{\tmpbox}%
}
\DeclareSymbolFont{bbold}{U}{bbold}{m}{n}
\DeclareSymbolFontAlphabet{\mathbbold}{bbold}

\makeatletter
\def\@tocline#1#2#3#4#5#6#7{\relax
  \ifnum #1>\c@tocdepth % then omit
  \else
    \par \addpenalty\@secpenalty\addvspace{#2}%
    \begingroup \hyphenpenalty\@M
    \@ifempty{#4}{%
      \@tempdima\csname r@tocindent\number#1\endcsname\relax
    }{%
      \@tempdima#4\relax
    }%
    \parindent\z@ \leftskip#3\relax \advance\leftskip\@tempdima\relax
    \rightskip\@pnumwidth plus4em \parfillskip-\@pnumwidth
    #5\leavevmode\hskip-\@tempdima
      \ifcase #1
       \or\or \hskip 1em \or \hskip 2em \else \hskip 3em \fi%
      #6\nobreak\relax
    \hfill\hbox to\@pnumwidth{\@tocpagenum{#7}}\par% <---- \dotfill -> \hfill
    \nobreak
    \endgroup
  \fi}
\makeatother

\newcommand{\R}{\mathbb{R}}

\newcommand{\Z}{\mathbb{Z}}

\renewcommand{\S}{\mathfrak{S}}

\newcommand{\C}{\mathbb{C}}

\newcommand{\E}{\mathbb{E}}

\newcommand{\mc}{\mathcal}

\newcommand{\bbone}{\mathbbold{1}}
\renewcommand{\l}{\lambda}

\DeclareMathOperator{\Tr}{Tr}

\DeclareMathOperator{\Id}{Id}

\DeclareMathOperator{\Ind}{Ind}
\DeclareMathOperator{\Res}{Res}

\DeclareMathOperator{\Var}{Var}

\DeclareMathOperator{\proj}{proj}

\DeclareMathOperator{\Pois}{Pois}

\DeclareMathOperator{\diag}{diag}

\title{The Cutoff Profile for Random Transpositions on Repeated Cards in the Full Range of Parameters}
\author{Jiahe Shen}

\date{\today}

\begin{document}

\thanks{The author thanks Evita Nestoridi and Dominik Schmid for helpful discussions. The author also thanks Roger Van Peski for reading the draft and providing comments. The author acknowledges support from Ivan Corwin's NSF grant DMS-2246576 and Simons Investigator grant 929852.}

\maketitle

\begin{abstract}
The random transposition shuffle on repeated cards induces a Markov chain on the quotient space of arrangements with multiplicities, and is equivalent to the many-urn mean-field Bernoulli-Laplace model introduced by Scarabotti \cite{scarabotti1997time}. Writing \(n=ml\), where there are \(m\) card types and each type appears \(l\) times, we determine the limiting profile for the total variation distance to stationarity at times
\[
t=\frac n2\left(\log n-\frac12\log l+c\right),
\]
under the assumption \(l=\omega(1)\). Scarabotti  previously established that this process exhibits cutoff at time \(\frac n2(\log n-\frac12\log l)\); our result refines this by identifying the precise asymptotic shape of convergence inside the cutoff window. We show that the limiting profile is asymptotically Gaussian, with different explicit forms in the regimes \(m\) fixed and \(m=\omega(1)\). Together with our previous work on the fixed-\(l\) regime, where the limiting profile is of Poisson type, this yields the cutoff profile for the random transposition shuffle on \(n=ml\) repeated cards for the full range of parameters \(m\) and \(l\).

Our argument has two main steps. First, we combine Scarabotti’s Fourier-analytic framework for the many-urn Bernoulli-Laplace model with the approximation method of Jain-Sawhney \cite{jain2024hitting}. More precisely, we compare the original shuffling measure with an explicitly tractable auxiliary measure directly on the repeated card quotient space, rather than passing through an intermediate comparison on the full symmetric group; this step relies in particular on our new estimates for Kostka numbers. Second, we reduce the limiting-profile problem to quotient fixed-point statistics and analyze them via Hoeffding-type combinatorial central limit theorems.
\end{abstract}

\textbf{Keywords: }\keywords{Card shuffling, Random walk, Symmetric group, Representation theory}

\textbf{Mathematics Subject Classification (2020): }\subjclass{60J10 (primary); 60B15, 20C30 (secondary)}

\tableofcontents

\section{Introduction}\label{sec: Intro}

\subsection{Main results}

Card shuffling has long served as a central source of examples and problems in the study of mixing times for Markov chains. Among the many models that have been studied, the random transposition shuffle occupies a particularly distinguished position. In their seminal work, Diaconis-Shahshahani \cite{diaconis1981generating} proved that the random transposition walk on \(S_n\) exhibits cutoff at time \(\frac12 n\log n\), thereby providing one of the earliest and most influential examples of the cutoff phenomenon for a natural Markov chain. Much more recently, Teyssier \cite{teyssier2020limit} refined this result by determining the limiting profile inside the cutoff window. Thus, for the random transposition shuffle on the symmetric group, both the cutoff location and the precise asymptotic shape of convergence in total variation are now understood.

Since then, a rich literature has developed around related shuffling models on permutations. These include, among many others, riffle shuffling \cite{bayer1992riffle}, star transpositions \cite{nestoridi2024comparing}, random \(k\)-cycles \cite{berestycki2011mixing,hough2016random, nestoridi2022limit}, and adjacent transpositions \cite{lacoin2016adjacent}, each of which exhibits its own characteristic mixing behavior and often requires rather different techniques. In the present paper, we study the random transposition shuffle in the setting of repeated cards, where the state space is no longer the full symmetric group itself but the space of repeated card arrangements. Our goal is to determine the cutoff profile in this setting for the full range of parameters.

We begin by introducing the quotient-space description of repeated card arrangements. For all $n\ge 1$, let $\S_n$ be the symmetric group over $n$ symbols, and $U_{\S_n}$ be the uniform distribution over $\S_n$. 

\begin{defi}\label{defi: quotient of space}
Suppose $n=ml$ where $m,l\ge 1$. Then we abbreviate $\sim_l\backslash\S_n$ for the set of left cosets
$$(\underbrace{\S_l\times\cdots\times \S_l}_{m\text{ times}})\backslash\S_n:=\{(\underbrace{\S_l\times\cdots\times \S_l}_{m\text{ times}})\circ\sigma:\sigma\in\S_n\},$$
where 
$$\underbrace{\S_l\times\cdots\times \S_l}_{m\text{ times}}:=\{\sigma\in\S_n:\sigma(i)\equiv i\pmod m\quad\forall1\le i\le n\}$$
is a subgroup of $\S_n$. 
Denote by $\sim_l\backslash U_{\S_n}$ the uniform distribution over $\sim_l\backslash\S_n$.
\end{defi}

In other words, $\sim_l\backslash\S_n$ is exactly the state space of repeated card arrangements with $n=ml$ cards, where there are $m$ card types and each type appears $l$ times; two permutations are identified whenever they differ only by permuting the $l$ copies of each type.

Let us now recall the definition of the random transposition shuffle. As is standard in the literature on random transpositions (see, for instance, Diaconis-Shahshahani \cite{diaconis1981generating} and Teyssier \cite{teyssier2020limit}), we work with the lazy version of the chain, meaning that the identity permutation is chosen with nonzero probability.

\begin{defi}\label{defi: Pnk and its convolution}
Let $P_n$ be the random permutation given by
\begin{equation}\label{eq: measure of k-cycle}
P_n:=\begin{cases}
(i j) & \text{with probability }\frac{2}{n^2}\\
\Id & \text{with probability }\frac{1}{n}\end{cases}.
\end{equation}
Moreover, for any $t\ge 1$, let $P_{n}^{*t}$ be the $t$-fold convolution of $P_n$, viewed $P_{n}^{*t}$ as a random variable in $\S_n$. Given $n=ml$ with $l\ge 2$, the random variable $\sim_l\backslash P_{n}^{*t}\in\sim_l\backslash\S_n$ is induced from \Cref{defi: quotient of space} in the obvious way.
\end{defi}

We now state our main theorem, which determines the cutoff profile for the random transposition shuffle on repeated cards in the regime \(l=\omega(1)\). 

\begin{thm}[\Cref{thm: restate of limiting profile} in the text]\label{thm: limiting profile_main thm}
Let $n=ml$, such that $m,l\ge 2$, and $l=\omega(1)$. Suppose 
$$t=\frac n2\left(\log n-\frac 12\log l+c\right),$$ 
where $c\in \R$ is an absolute constant. Then, we have 
$$d_{TV}(\sim_l\backslash P_{n}^{*t},\sim_l\backslash U_{\S_n})=\begin{cases}
2\Phi\left(\frac{\exp(-c)}{2}\right)-1+o(1) & m=\omega(1)\\
2\Phi\left(\sqrt{1-\frac 1m}\frac{\exp(-c)}{2}\right)-1+o(1) & m\ge 2 \text{ fixed}
\end{cases}.$$
Here, $\Phi(x):=\frac{1}{\sqrt{2\pi}}\int_{-\infty}^x\exp(-y^2/2)dy$ denotes the cumulative distribution function of the normal distribution $\mathcal{N}(0,1)$.
\end{thm}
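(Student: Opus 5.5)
The proof has two stages: replace the walk by a tractable auxiliary measure on the quotient, then compute the total variation distance of that measure to uniform through a central limit theorem for a fixed-point statistic.

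\textbf{Stage 1: the auxiliary measure.} Following the abstract, I would compare \(\sim_l\backslash P_n^{*t}\) with an auxiliary measure \(Q_t\) built directly on \(\sim_l\backslash\S_n\), in the spirit of Jain--Sawhney.
\begin{itemize}
\item The functions on \(\sim_l\backslash\S_n\) decompose, by Young's rule, into irreducibles \(S^\lambda\) with \(\lambda\trianglerighteq (l^m)\), each appearing with multiplicity the Kostka number \(K_{\lambda,(l^m)}\).
\item The Fourier coefficient of \(P_n\) on \(S^\lambda\) is \(\frac1n+\frac{n-1}{n}\frac{\chi_\lambda(\tau)}{d_\lambda}\), where \(\tau\) is a transposition.
\item At the given \(t\), the dominant modes are \(\lambda=(n-k,k)\) and the shapes close to it, those with a first row of length \(n-k\) and \(k=O(\sqrt l)\) boxes below. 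Since \(\chi_\lambda(\tau)/d_\lambda\approx 1-2k/n\), these modes decay like \(\exp(-2kt/n)\approx (\sqrt l/n)^k e^{-ck}\).
\item I would pick \(Q_t\) to be the law of a uniform arrangement perturbed by a Poisson-type number of random transpositions. A natural choice is a uniform permutation tilted by \(\exp(-c)\)-weighted fixed-point statistics, chosen so that its Fourier coefficients on these low modes match \((1-2k/n)^t\) to leading order.
\end{itemize}
The comparison \(d_{TV}(\sim_l\backslash P_n^{*t},Q_t)=o(1)\) would then follow from an \(\ell^2\) (Cauchy--Schwarz) bound on the quotient:
\[
4\,d_{TV}^2\le \sum_{\lambda\neq(n)} d_\lambda\, K_{\lambda,(l^m)}\,\bigl|\widehat{P}^t(\lambda)-\widehat{Q}_t(\lambda)\bigr|^2 .
\]
The bound is applied to the difference measure. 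It uses the fact that the spherical component of \(S^\lambda\) has dimension \(K_{\lambda,(l^m)}\).

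\textbf{Main obstacle.} The hardest step is controlling the terms where \(\lambda\) has many boxes outside the first row. This needs the new upper bounds on \(K_{\lambda,(l^m)}\), which should be much smaller than \(d_\lambda\) (roughly \(\binom{m+k}{k}\)-type rather than \(n^k/k!\)). Only with such bounds do the sums \(\sum_k d_\lambda K_\lambda e^{-4kt/n}\) converge and tend to \(0\) once one restricts to the mismatch between \(P^t\) and \(Q_t\). I would first split into \(k\le K_0\), where Taylor expansion of the characters gives matching up to \(o(1)\) factors, and \(k>K_0\), where crude bounds on the character ratio together with the Kostka bounds make the tail small.

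\textbf{Stage 2: the limiting profile.} Having reduced to \(Q_t\), I would express \(dQ_t/d(\sim_l\backslash U_{\S_n})\) as a function of a quotient fixed-point statistic \(F(\sigma)=\#\{i:\sigma(i)\equiv i \pmod m\}\). This is a sum of \(n\) indicators with mean \(l\) and variance approximately \(l(1-\frac1m)\).

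I expect the likelihood ratio to be asymptotically \(\exp\bigl(aX-\frac{a^2}{2}\bigr)\), where \(X=(F-l)/\sqrt{l}\) and \(a\) is proportional to \(\exp(-c)\). Hoeffding's combinatorial CLT applies to \(F\) because \(l\to\infty\), giving \(X\Rightarrow\mathcal N\bigl(0,1-\frac1m\bigr)\) for fixed \(m\), and \(X\Rightarrow\mathcal N(0,1)\) when \(m\to\infty\).

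The total variation distance between \(\mathcal N(0,v)\) and its exponential tilt, which is \(\mathcal N(\mu,v)\) with shift \(\mu=e^{-c}\sqrt v\), equals \(2\Phi\bigl(\mu/(2\sqrt v)\bigr)-1\). With the correct normalization this gives \(2\Phi\bigl(\sqrt{1-1/m}\,e^{-c}/2\bigr)-1\), and \(1-\frac1m\to1\) recovers the \(m=\omega(1)\) case.

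Uniform integrability of the likelihood ratio is needed to pass the CLT to total variation. I would get it from the second-moment bound on the tilt that was already computed in Stage 1.
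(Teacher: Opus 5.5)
There is a genuine gap in Stage 1: you never actually specify the auxiliary measure, and neither candidate you name fits the argument. As literally stated, perturbing a uniform arrangement by random transpositions leaves it uniform. Tilting the quotient-uniform measure by a function of the quotient statistic $F$ gives an $(\S_l)^m$-bi-invariant function. Its Fourier transform on $V_\lambda^{(\S_l)^m}$ is a $K_{\lambda,(l^m)}\times K_{\lambda,(l^m)}$ block. The bound you display, with a scalar $\widehat{Q}_t(\lambda)$, is the paper's quotient $L^2$ formula, and it is available when $Q_t$ comes from a class function on $\S_n$. You give no way to compute these blocks; doing so would mean working in the Hecke algebra of $(\S_n,(\S_l)^m)$, which is non-commutative once $m\ge 3$.

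The missing idea is the Jain--Sawhney measure $\nu_n^t$: draw $M\sim\Pois(\gamma_t)$ with $\gamma_t=e^{-c}\sqrt l$, fix a uniformly random $M$-subset, and permute its complement uniformly.
\begin{itemize}
\item It is a class function with explicit coefficients, $\widehat{\xi_M}(\lambda)=\binom{M}{r}\frac{d_{\lambda^*}}{d_\lambda}\Id_{d_\lambda}$ with $r=n-\lambda_1$.
\item Hence for $r\le\log n$ one gets $f_\nu(\lambda)=(1-\frac rn+O(n^{-2+o(1)}))e^{-2rt/n}$, which matches $f_P(\lambda)$ to relative error $n^{-1+o(1)}$.
\item The Kostka input then reduces to $K_{\lambda,(l^m)}\le m^r$ and $\sum_{\lambda_1=n-r}d_\lambda K_{\lambda,(l^m)}\le\binom nr^2m^r$.
\end{itemize}
After normalization, $\nu_n^t$ is exactly the uniform measure tilted by $(1+\gamma_t)^{\#\{i:\sigma(i)=i\}}$. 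So a fixed-point tilt does work, but only with \emph{ordinary} fixed points and weight $1+e^{-c}\sqrt l$, not a weight $e^{-c}$ on $F$.

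With this measure the premise of Stage 2 fails: the quotient likelihood ratio is not a function of $F$, as the paper's closing remark illustrates for $m=l=3$. What replaces it has three parts:
\begin{itemize}
\item an exact reduction to $\diag(\sigma)=(X_{11},\dots,X_{mm})$, on whose fibers both laws are uniform;
\item the explicit conditional ratio $k!\sum_{r_1+\dots+r_m=k}\prod_i(l)_{r_i}^{-1}\binom{a_{ii}}{r_i}$ given $|S_t|=k$;
\item hypergeometric concentration of the $r_i$ together with Vandermonde's identity, showing that on the typical set this ratio equals $(1+o(1))\,l^{-k}(F)_k\,e^{k^2/(2ml)}$.
\end{itemize}
Hence the ratio is above or below $1$ according as $F$ exceeds or falls short of $l+\frac12(1-\frac1m)e^{-c}\sqrt l$ by a margin of order $c_0\sqrt l$. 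Hoeffding's CLT for $|F_{n,l}|$ under both laws then finishes, with no uniform integrability needed.

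The factor $e^{k^2/(2ml)}$ reflects that the $k$ forced fixed points displace about $k/m$ accidental same-type matches. It is exactly what turns the mean shift into $(1-\frac1m)e^{-c}\sqrt l$, and your heuristic loses it. First, $\exp(aX-a^2/2)$ is not normalized when $X\sim\mathcal N(0,1-\frac1m)$. Second, inserting $\mu=e^{-c}\sqrt v$ into $2\Phi(\mu/(2\sqrt v))-1$ gives $2\Phi(e^{-c}/2)-1$, the wrong constant for fixed $m$. The correct limiting ratio is $\exp(aX-a^2v/2)$ with $a=e^{-c}$ and $v=1-\frac1m$. This gives shift $av$ and distance $2\Phi(a\sqrt v/2)-1$.
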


In particular, \Cref{thm: limiting profile_main thm} shows that the random transposition shuffle on repeated cards exhibits cutoff at time
\[
\frac n2\left(\log n-\frac12\log l\right)
\]
with window of order \(n\). Together with the author’s previous work \cite{shen2026k} on the fixed-\(l\) regime \footnote{Strictly speaking, the author’s previous paper \cite{shen2026k} treated the non-lazy version of the shuffle, in which the identity permutation has probability zero at each step. However, Jain and Sawhney’s approximation theorem \cite[Theorem 1.3]{jain2024hitting} is stated for the standard lazy random transposition walk, where the identity is chosen with nonzero probability. Combining that result directly with the counting argument in \cite[Section 3]{shen2026k}, one can derive the same limiting profile for the lazy repeated card shuffle by essentially the same method.}, this yields the cutoff profile for the random transposition shuffle on \(n=ml\) repeated cards over the full range of parameters \(m\) and \(l\). In the fixed-\(l\) regime, the limiting total variation profile is of Poisson type, whereas \Cref{thm: limiting profile_main thm} shows that once \(l=\omega(1)\), the profile becomes Gaussian. Thus, taken together, these results give a complete description of the limiting profile for Scarabotti’s many-urn mean-field Bernoulli-Laplace model, which is equivalent to the repeated card random transposition shuffle after passing to the natural quotient that identifies repeated copies of the same card type.

From the viewpoint of the Bernoulli-Laplace diffusion model with many urns, this identifies the limiting profile at the cutoff scale for a Markov process whose cutoff location was already established by Scarabotti \cite{scarabotti1997time}. In this sense, the present theorem complements Scarabotti’s result by passing from the cutoff phenomenon to the precise asymptotic shape of convergence inside the cutoff window. Moreover, for the open problem of Goenka-Hermon-Schmid \cite[Question 1.9]{goenka2025cutoff} concerning the limiting profile of the balanced chain, the present theorem resolves the mean-field many-urn special case, namely Scarabotti's Bernoulli-Laplace model on the complete graph, or equivalently the repeated card random transposition shuffle on the quotient space. To the best of our knowledge, this is the first result that determines the cutoff profile for this repeated card random transposition shuffle (or equivalently, this many-urn Bernoulli-Laplace process) over the full range of parameters.

Our result also parallels a broader line of work on mixing and cutoff for random walks on homogeneous quotient spaces. In particular, Nestoridi and Olesker-Taylor \cite{nestoridi2022limit} developed a general framework for limit profiles of reversible Markov chains, and later studied limit profiles for projections of random walks on groups in a systematic way. Repeated card shuffling models have also been studied previously in the context of riffle shuffles: for instance, Conger-Viswanath \cite{conger2006riffle} analyzed riffle shuffles of decks with repeated cards, and further investigated related questions motivated by card games such as blackjack and bridge; see also the work of Assaf-Diaconis-Soundararajan \cite{assaf2009riffle} on riffle shuffles of a deck with repeated cards. Broadly speaking, this line of work relies on explicit combinatorial descriptions of the induced transition probabilities and on the study of descent-type statistics for multiset permutations.

Our approach in this paper is, however, rather different from the directions described above. To study the repeated card model, we work within the Fourier-analytic framework underlying Scarabotti’s analysis of the many-urn Bernoulli-Laplace process. At the same time, we import into this setting the approximation method of Jain-Sawhney \cite{jain2024hitting}, which was originally developed for the classical random transposition shuffle without repeated copies. Roughly speaking, their method replaces the original shuffling measure by an explicitly tractable auxiliary measure that remains asymptotically close to the original walk in the relevant time regime, while being much easier to analyze, as given by the following. 

\begin{defi}\label{defi: construction of nu}
Suppose $t\in\Z$. Set $t':=t-\lfloor\frac{n\log n}{2}\rfloor$, and $\gamma_t:=\exp(-2t'/n)$. Let $\nu_n^t$ denote the measure on $\S_n$ defined by the following sampling process: first, sample $M_t\in\{0,1,\ldots,n\}$ according to the distribution 
$$\mathbf{P}[M_t=k]=\mathbf{P}(\Pois(\gamma_t)=k)/\mathbf{P}(\Pois(\gamma_t)\le n),$$ 
then sample a uniformly random subset $S_t$ of size $M_t$ in $[n]$. Finally, sample a uniformly random element of $\S_{[n]\backslash S_t}$ and view it as an element of $\S_n$ by fixing all of the elements in $S_t$.
\end{defi}

In the setting of \Cref{thm: limiting profile_main thm}, we will always take
\[
t=\frac n2\left(\log n-\frac 12\log l+c\right),
\]
so that
\[
\gamma_t=\exp(-c)\sqrt l.
\]
The role of \Cref{defi: construction of nu} is to introduce an explicitly tractable auxiliary measure at exactly this timescale. The next theorem shows that, on the repeated card quotient space, this measure approximates the original random transposition shuffle asymptotically well, and thus provides the main input toward the proof of \Cref{thm: limiting profile_main thm}.

\begin{thm}[\Cref{thm: restate of pnkt and nu} in the text]\label{thm: pnkt and nu}
Let $n=ml$, such that $m,l\ge 2$, and $n=\omega(1)$. Suppose 
$$t=\frac n2\left(\log n-\frac 12\log l+c\right),$$ 
where $c\in \R$ is an absolute constant. Let $\nu_n^t$ be the probability measure over $\S_n$ defined the same way as in \Cref{defi: construction of nu}, and let $\sim_l\backslash \nu_n^t$ be the probability measure over $\sim_l\backslash\S_n$ induced by $\nu_n^t$. Then, we have 
$$d_{TV}(\sim_l\backslash P_n^{*t},\sim_l\backslash\nu_n^t)\le n^{-1+o(1)}.$$
\end{thm}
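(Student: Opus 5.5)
We bound the total variation distance via Cauchy--Schwarz in the $L^2$ sense on the quotient, using the Fourier structure of the Bernoulli--Laplace model. Both $P_n^{*t}$ and $\nu_n^t$ are class functions on $\S_n$, since $\nu_n^t$ is a mixture of uniform measures on $\S_{[n]\setminus S}$ over uniformly random $S$. Hence their projections to $\sim_l\backslash\S_n$ are bi-$\S_l^{\times m}$-invariant after averaging, and the Fourier transform on the permutation module $M^{(l^m)}=\Ind_{\S_l^{m}}^{\S_n}\bbone$ is diagonal. Each irreducible $S^\lambda$ appears with multiplicity equal to the Kostka number $K_{\lambda,(l^m)}$. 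The upper bound lemma then gives
\[
4\,d_{TV}(\sim_l\backslash P_n^{*t},\sim_l\backslash\nu_n^t)^2\le \sum_{\lambda\ne (n)} d_\lambda K_{\lambda,(l^m)}\,\bigl|\widehat{P_n}(\lambda)^t-\widehat{\nu_n^t}(\lambda)\bigr|^2 ,
\]
where $\widehat{P_n}(\lambda)=\frac1n+\frac{n-1}{n}\frac{\chi_\lambda(\tau)}{d_\lambda}$ and $\widehat{\nu_n^t}(\lambda)=\E[\chi_\lambda(\sigma)]/d_\lambda$ for $\sigma\sim\nu_n^t$. Only $\lambda$ with $K_{\lambda,(l^m)}\neq 0$ contribute, that is, $\lambda\trianglerighteq(l^m)$; in particular $\lambda$ has at most $m$ rows. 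This replaces the factor $d_\lambda^2$ from the full group by $d_\lambda K_{\lambda,(l^m)}$, which is exactly what moves the cutoff from $\frac n2\log n$ to $\frac n2(\log n-\frac12\log l)$.

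\textbf{Step 1: computing $\widehat{\nu_n^t}(\lambda)$.} The uniform measure on $\S_{[n]\setminus S}$ with $|S|=k$ has Fourier coefficient equal to the projection onto $\S_{n-k}$-invariants. By the branching rule this gives $\widehat{\nu}(\lambda)=\E\bigl[f^{\lambda/(n-M_t)}\bigr]/d_\lambda$, which is nonzero only when $\lambda_1\ge n-M_t$. Writing $\lambda=(n-k,\mu)$, this is $\mathbf P(\Pois(\gamma_t)\ge k\mid\cdot)$ weighted by $d_\mu\binom{M_t}{k}/d_\lambda$. Asymptotically it equals $e^{-k\cdot 2t'/n}$ times a correction, which mimics $\widehat{P_n}(\lambda)^t\approx\exp(-2tk/n)$ for $\lambda_1=n-k$ with $k$ small. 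This is Jain--Sawhney's computation \cite{jain2024hitting}, and I will import it directly.

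\textbf{Step 2: splitting the sum.} For small $k=n-\lambda_1\le n^{\delta}$, I will show that the two coefficients agree up to a relative error of order $(k^2+|\mu|\text{-terms})/n$ times $\gamma_t^{k}/k!$. Then $d_\lambda\le n^k/\sqrt{k!}\cdot d_\mu$ and $K_{\lambda,(l^m)}\le \binom{k+m-1}{m-1}$-type bounds apply. Since $\gamma_t^2=e^{-2c}l$ and the multiplicity sum over $\mu\vdash k$ supplies roughly $(n/l)^{k}$-compensating factors, the weighted sum converges and carries the extra $n^{-1+o(1)}$ factor. For large $k$, each term is individually tiny: $\widehat{\nu}$ is exponentially small because $\mathbf P(\Pois(\gamma_t)\ge k)$ decays superexponentially, and $\widehat{P_n}^t$ is small by the Diaconis--Shahshahani character bounds. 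I will bound the two separately and use $\sum_\lambda d_\lambda K_{\lambda,(l^m)}=|\sim_l\backslash\S_n|$ restricted to the appropriate range.

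\textbf{Main obstacle.} The hard part is the new Kostka estimate needed in Step 2. I need $K_{\lambda,(l^m)}$ for $\lambda=(n-k,\mu)$ with $m$ parts of size $l$, uniformly in $m$ and $l$. I would aim for
\[
K_{(n-k,\mu),(l^m)}\le d_\mu\cdot \min\Bigl\{\tbinom{m+k}{k},\ \tfrac{(Cm)^k}{\cdots}\Bigr\}.
\]
To get this, I would count SSYT of content $(l^m)$ by observing that the entries outside the first row form an SSYT of shape $\mu$ with content a composition of $k$ into at most $m$ parts, each part at most $l$. This gives $K\le\sum_{\alpha}K_{\mu,\alpha}\le m^k d_\mu/k!\cdot$const. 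Combining this with $d_\lambda\approx\binom nk d_\mu$ yields a weight of about $(nm)^k d_\mu^2/(k!)^2$ against $e^{-2tk/n}=(e^{-c}\sqrt l/n)^{k}$ squared, giving $(m l e^{-2c}/n)^k\cdot$ a bounded factor. This is summable, and the $O(k^2/n)$ relative error yields $n^{-1+o(1)}$. Making the Kostka bound sharp enough uniformly, particularly in the intermediate range $n^\delta<k<n/2$ where I must pass from the first-row picture to general dominance, is where I expect most of the work.
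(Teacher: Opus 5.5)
Your overall architecture matches the paper's. It consists of the quotient Plancherel bound with weights $d_\lambda K_{\lambda,(l^m)}$, the Jain--Sawhney formula $\widehat{\nu_n^t}(\lambda)=\E\bigl[\binom{M_t}{r}\bigr]d_{\lambda^*}/d_\lambda$ with $r=n-\lambda_1$, and a split at long first row. In that range both coefficients are $(1+O(n^{-1+o(1)}))e^{-2rt/n}$, and the trivial bounds $K_{\lambda,(l^m)}\le m^r$ and $d_\lambda\le n^r/\sqrt{r!}$ already make the weighted sum converge.

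The gap is the tail. This is precisely the Kostka estimate you defer as the ``main obstacle'', and it is the paper's new ingredient; what you do write toward it is not correct. The per-shape bound $K_{(n-k,\mu),(l^m)}\le C\,m^k d_\mu/k!$ is false. For $m=2$, $\mu=(k)$, $k\le l$, one has $K_{(n-k,k),(l,l)}=1$, while $2^k/k!\to0$. What is true is what your own first-row-stripping count gives once you sum over $\mu$ rather than bounding shape by shape:
\[
\sum_{\lambda_1=n-k}d_\lambda K_{\lambda,(l^m)}\le\binom nk\sum_{\mu\vdash k}d_\mu\,s_\mu(1^m)=\binom nk m^k .
\]
This follows from $d_{(n-k,\mu)}\le\binom nk d_\mu$, $K_{(n-k,\mu),(l^m)}\le\sum_\alpha K_{\mu,\alpha}\le s_\mu(1^m)$, and $\sum_{\mu\vdash k}d_\mu s_\mu(1^m)=m^k$. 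It holds for every $k$, so no separate ``passage to general dominance'' in an intermediate range is needed. The paper proves the weaker $\binom nk^2m^k$ by replacing each content with the balanced one via dominance and counting contents by $\binom nk$.

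Even with such a level bound, the rest of your tail plan needs repair in three places.

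\textbf{(i) The total mass is too large.} You cannot use $\sum_\lambda d_\lambda K_{\lambda,(l^m)}=n!/(l!)^m$ ``restricted to a range''. For bounded $l$ this is $e^{(1-o(1))n\log n}$. Already for two-row $\lambda=(n-xn,xn)$ with $0<x<\tfrac12$, one has $|f_P(\lambda)|^2=\exp\bigl(-(1+o(1))\,n\log n\cdot\log\tfrac{1}{1-2x+2x^2}\bigr)$, and $\log\tfrac1{1-2x+2x^2}\le\log2<1$. You must instead pair the level bound with the bounds on the base of $f_P$. For $\lambda_1\ge\lambda_1'$ and $x:=k/n\ge 0.001$, use that the base is at most $\lambda_1/n=1-x$ in absolute value; then $(1-x)^{2t}\le e^{-2tx-tx^2}$, and the factor $e^{-tx^2}$ absorbs $\binom nk e^{O(n)}$. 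Below that range, use the bound $1-2k/n+2k^2/n^2$.

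\textbf{(ii) The $\nu$ side needs the dimension ratio.} The Poisson tail alone is too weak. Since $\binom nk m^k\bigl(\gamma_t^k/k!\bigr)^2\ge(n^2e^{-2c}/k^3)^k$, this blows up for $k\ll n^{2/3}$. You must keep the factor $d_{\lambda^*}/d_\lambda\approx\binom nk^{-1}$, as the paper's bound $|f_\nu(\lambda)|\le\prod_{i<r}(1-i/n)^{-1}e^{-2rt/n}$ does.

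\textbf{(iii) Partitions with $\lambda_1'>\lambda_1$ are never treated.} Since $K_{\lambda,(l^m)}\ne K_{\lambda',(l^m)}$, the Diaconis--Shahshahani $\lambda\leftrightarrow\lambda'$ symmetry is unavailable. The row-level bound is useless there when $l$ is small. For $l=2$ and $\lambda=(2^{n/2})$, one has $|f_P(\lambda)|^2\approx2^{-2t}=e^{-(\log2+o(1))n\log n}$, while the level-$\{\lambda_1=2\}$ bound is $\binom n2(n/2)^{n-2}=e^{(1-o(1))n\log n}$. The paper handles this side in two steps. It uses $K_{\lambda,(l^m)}=0$ for $\lambda_1'>m$. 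It then applies the first-column analogue $\sum_{\lambda_1'=n-r'}d_\lambda K_{\lambda,(l^m)}\le\binom n{r'}^2m^{r'}$, combined with the base bound $\lambda_1'/n$.
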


In order to prove \Cref{thm: pnkt and nu}, we still adopt a Fourier-analytic approach, as in the work of Jain-Sawhney. More precisely, our goal is to compare the original shuffling measure with the explicitly tractable auxiliary measure \(\nu_n^t\) through their Fourier transforms. However, one should note that if one works on the full symmetric group \(\mathfrak S_n\), namely without passing to the repeated card quotient space, then the approximation between \(P_n^{\ast t}\) and \(\nu_n^t\) is accurate only when \(t\) lies very close to \(\frac{n\log n}{2}\), which is the mixing-time scale of the classical random transposition walk. In the repeated card setting considered here, the relevant time scale is instead
\[
t=\frac n2\left(\log n-\frac12\log l+c\right),
\]
which may be substantially smaller.

For this reason, we must carry out the comparison directly on the quotient space, rather than first comparing \(P_n^{\ast t}\) and \(\nu_n^t\) on \(\S_n\). From this point of view, our argument remains within the same Fourier-analytic framework as Scarabotti’s study of the many-urn Bernoulli-Laplace model. In particular, after passing to the quotient by \(H=(\S_l)^m\), the relevant \(L^2\)-comparison may be expressed as a spectral sum over irreducible representations of \(\S_n\); see \Cref{prop: quotient L2 formula}. The contribution of a partition \(\lambda\vdash n\), viewed as indexing the corresponding irreducible representation \(V_\lambda\) of \(\S_n\), is weighted by
\[
d_\lambda\,\dim(V_\lambda^H),
\]
where \(d_\lambda=\dim(V_\lambda)\), and \(V_\lambda^H\) denotes the \(H\)-fixed subspace of \(V_\lambda\), namely the subspace consisting of vectors invariant under the action of every element of \(H\). A key observation is that, by Frobenius reciprocity, the multiplicity \(\dim(V_\lambda^H)\) can be identified with the Kostka number \(K_{\lambda,(l^m)}\). Thus the quotient-level comparison naturally leads to spectral sums weighted by \(d_\lambda K_{\lambda,(l^m)}\).

This reduction brings in a new combinatorial input. In the classical random transposition shuffle on \(\S_n\), one only needs to control the dimensions \(d_\lambda=K_{\lambda,(1^n)}\), as in the work of Diaconis-Shahshahani \cite{diaconis1981generating}. In the repeated card setting, however, one must instead bound the more general Kostka numbers \(K_{\lambda,(l^m)}\). We obtain new estimates for these Kostka numbers, and combine them with the relevant character bounds to control the quotient-level spectral sum, which in turn yields \Cref{thm: pnkt and nu}.

We now explain how to deduce \Cref{thm: limiting profile_main thm} from \Cref{thm: pnkt and nu}. By \Cref{thm: pnkt and nu}, it suffices to replace the original random transposition shuffle by the auxiliary measure \(\nu_n^t\), at the cost of an asymptotically negligible error in total variation distance on the quotient space. Therefore, the proof of \Cref{thm: limiting profile_main thm} is reduced to analyzing the total variation distance between \(\sim_l\backslash \nu_n^t\) and the uniform measure on \(\sim_l\backslash \S_n\). The advantage of passing to \(\nu_n^t\) is that it admits an explicit probabilistic description in terms of a random fixed set together with a uniformly random permutation on its complement. As a result, the problem can be further reduced to understanding the corresponding quotient fixed-point statistics. The advantage of this reformulation is that these statistics are amenable to probabilistic analysis: in \Cref{sec: From tractable measure to total variation distance}, we use Hoeffding-type combinatorial central limit theorems to derive the asymptotic Gaussian behavior of the quotient fixed-point statistics, together with several auxiliary tail bounds established by probabilistic methods. This ultimately leads to the proof of \Cref{thm: limiting profile_main thm}.

It is worth mentioning that the quotient-space method developed above is not restricted to the balanced repeated-card setting in which every card type appears with the same multiplicity. For example, the classical Bernoulli-Laplace urn model studied in Olesker-Taylor and Schmid \cite{olesker2024limit} corresponds, in the present language, to the repeated-card quotient with two card types, one appearing \(n-k\) times and the other appearing \(k\) times, where \(k\leq n/2\); equivalently, the relevant subgroup is
\[
\S_{n-k}\times \S_k\leq \S_n .
\]
In this case the quotient-level Fourier expansion is governed by the Kostka number
\[
K_{\lambda,(n-k,k)}.
\]
This Kostka number has a particularly simple form: it is equal to \(1\) when \(\lambda=(n)\), or when \(\lambda\) has two rows and its first row has length at least \(n-k\), and it is equal to \(0\) otherwise. Consequently, the spectral sums arising from our quotient-space comparison become completely explicit in this two-type setting. In particular, combining this explicit Kostka-number description with the approximation method of Jain-Sawhney should give an alternative representation-theoretic proof of \cite[Theorem A]{olesker2024limit}, which determines the limiting profile for the Bernoulli-Laplace urn. This is related to a point made in the first version of \cite{olesker2024limit}: the authors noted that, although a representation-theoretic proof of the limiting profile would be desirable, they were not able to obtain one at that time. From the perspective of the present paper, the missing ingredient is precisely the approximation method of Jain-Sawhney, which allows one to replace the original random transposition walk by an explicitly tractable auxiliary measure before carrying out the quotient-level spectral analysis.

\subsection{What's next?}

A natural direction for future work is to extend the present quotient-space approach to the random \(k\)-cycle shuffle on repeated cards. At the representation-theoretic level, the relevant background should come from the existing theory of random \(k\)-cycles on \(\S_n\), notably the character estimates from Hough \cite{hough2016random} and the later limit-profile framework of Nestoridi and Olesker-Taylor \cite{nestoridi2022limit}. In addition, the recent sharp character bounds of Olesker-Taylor, Teyssier, and Thévenin \cite{olesker2025sharp} suggest that one may hope to treat substantially larger values of \(k\) than were previously accessible. This raises the question of whether the quotient-level methods developed in the present paper can be combined with these representation-theoretic inputs to study cutoff and cutoff profiles for \(k\)-cycle shuffling with repeated cards. At least heuristically, once one is able to establish an approximation by a suitably tractable auxiliary measure, in the spirit of \Cref{thm: pnkt and nu}, the remaining task should be to determine the limiting profile by analyzing the corresponding quotient fixed-point statistics, much as in \Cref{sec: From tractable measure to total variation distance}. The main difficulty in extending our argument to random \(k\)-cycles is therefore expected to lie in the approximation step. Compared with the random transposition case, this would require a more delicate analysis of the relevant representations and, in particular, character estimates that are sufficiently fine for the quotient-level spectral bounds needed here.

Another natural direction is to study other shuffling mechanisms on repeated cards. For instance, one may consider the star transposition shuffle, for which the comparison philosophy developed by Nestoridi \cite{nestoridi2024comparing} in her work on comparing limit profiles of reversible Markov chains seems especially relevant. It is therefore natural to ask whether one can combine the comparison method from that line of work with the quotient-level analysis developed in the present paper in order to determine the limiting profile for star transposition shuffling with repeated cards. 

A further direction is suggested by the \(k\)-particle interchange process on the complete graph, whose state space can be viewed as the homogeneous space \(H\backslash G\) with \(G=\S_n\) and \(H=\S_k\times \S_1\times\cdots\times \S_1\). Equivalently, this is the quotient space obtained by identifying permutations that differ only by relabeling the \(k\) marked particles among themselves. In \cite{nestoridi2022limit_projectionsofrandomwalk}, Nestoridi and Olesker-Taylor proved a Poisson limit profile when \(k/n\to\alpha\in(0,1]\), and moreover formulated a conjecture predicting that in the sparse regime \(1\ll k\ll n\), the total variation profile should instead be Gumbel at time \(\frac n2(\log k+c)\). From the perspective of the present paper, it seems reasonable to investigate whether one can approach this conjecture through suitable estimates for the relevant Kostka numbers.

\subsection{Notation}

Throughout the paper, \(d_{TV}(\cdot,\cdot)\) denotes total variation distance, \(\Pois(\cdot)\) denotes the Poisson distribution, and \(\Phi(\cdot)\) denotes the cumulative distribution function of the standard normal distribution \(\mathcal N(0,1)\).

We write \([n]:=\{1,2,\ldots,n\}\), and \(2^{[n]}\) for the collection of all subsets of \([n]\). When \(n=ml\), for each \(1\le i\le m\) we write
\[
T_{i,l}:=\{i,i+m,\ldots,i+(l-1)m\},
\]
so that \([n]=\bigsqcup_{i=1}^m T_{i,l}\). Thus \(T_{i,l}\) is the set of positions occupied by the \(i\)-th card type. We use \(|\cdot|\) to denote either the cardinality of a finite set or the absolute value of a complex number.

We write \(f=O(g)\) if \(|f|\le C|g|\) for some absolute constant \(C>0\), \(f=o(g)\) if \(f/g\to0\), and \(f=\omega(g)\) if \(f/g\to+\infty\). All asymptotic notation is understood in absolute value. Moreover, the implicit constants are taken along the given sequence of positive integer inputs; in particular, they are not intended uniformly over all such inputs, but only along the sequence under consideration.

We write \(\xrightarrow{d}\) for weak convergence, \(\mathbf P[\cdot\mid\cdot]\) for conditional probability, and \(\mathbf E[\cdot\mid\cdot]\) for conditional expectation. We also continue to use the notation \(\sim_l\backslash\) from \Cref{defi: quotient of space} to denote the repeated card quotient space and the corresponding induced measures.

\subsection{Outline of the paper}

In \Cref{sec: Approximating the shuffling with an explicitly tractable measure}, we first introduce the nonabelian Fourier transform and other necessary representation-theoretic background, and then use these tools to prove \Cref{thm: pnkt and nu}.

In \Cref{sec: From tractable measure to total variation distance}, based on the approximation provided by \Cref{thm: pnkt and nu}, we prove \Cref{thm: limiting profile_main thm}. We treat separately the two regimes where \(m\) is fixed and where \(m=\omega(1)\), and work out the details in each case.

\section{Approximating the shuffling with an explicitly tractable measure}\label{sec: Approximating the shuffling with an explicitly tractable measure}

The purpose of this section is to prove the following theorem, which gives a detailed formulation of \Cref{thm: pnkt and nu}.

\begin{thm}\label{thm: restate of pnkt and nu}
Let $c\in\R$ be an absolute constant. Let $(n_N)_{N\ge 1},(m_N)_{N\ge 1}$ and $(l_N)_{N\ge 1}$ be sequences of positive integers, such that 
\begin{enumerate}
\item $m_N,l_N\ge 2$ for all $N\ge 1$.
\item $n_N=m_Nl_N$ for all $N\ge 1$, and $\lim_{N\rightarrow\infty}n_N=\infty$.
\end{enumerate}
For all $N\ge 1$, let $t_N=\frac{n_N}{2}(\log n_N-\frac 12\log l_N+c)$. Let $P_{n_N}^{*t_N}$ be defined as in \Cref{defi: Pnk and its convolution}, and $\nu_{n_N}^{t_N}$ be defined as in \Cref{defi: construction of nu}. Let $\sim_{l_N}\backslash P_{n_N}^{*t_N},\sim_{l_N}\backslash \nu_{n_N}^{t_N}$ be their induced probability measures over $\sim_{l_N}\backslash\S_{n_N}$. Then, we have
$$\limsup_{N\rightarrow\infty}\log n_N(d_{TV}(\sim_{l_N}\backslash P_{n_N}^{*t_N},\sim_{l_N}\backslash\nu_{n_N}^{t_N}))\le -1.$$
\end{thm}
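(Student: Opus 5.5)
The plan is to bound total variation by an $L^2$ norm on the quotient and evaluate that norm spectrally, in the Diaconis--Shahshahani/Jain--Sawhney style, but with weights $d_\lambda K_{\lambda,(l^m)}$ in place of $d_\lambda^2$. Write $H=(\S_l)^m$ and $X=\sim_l\backslash\S_n$. By Cauchy--Schwarz,
\[
4\,d_{TV}(\sim_l\backslash P_n^{*t},\sim_l\backslash\nu_n^t)^2\le |X|\sum_{x\in X}\bigl(p(x)-q(x)\bigr)^2 .
\]
By \Cref{prop: quotient L2 formula}, the right-hand side becomes a sum over $\lambda\vdash n$ of $d_\lambda K_{\lambda,(l^m)}$ times the squared Fourier discrepancy on $V_\lambda^H$. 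Both $P_n$ and $\nu_n^t$ are conjugation-invariant, so their Fourier transforms act on each $V_\lambda$ as scalars.

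The first step is to compute these scalars. For $P_n$ the scalar is
\[
r_\lambda=\frac1n+\frac{n-1}{n}\,\frac{\chi_\lambda(\tau)}{d_\lambda},
\]
where $\tau$ is a transposition and the character ratio is given by the usual content formula. For $\nu_n^t$, conditioning on $M_t=k$ and averaging over the random set $S_t$ turns the uniform measure on $\S_{[n]\setminus S_t}$ into the scalar $\dim V_\lambda^{\S_{n-k}}/d_\lambda$. This equals $f^{\lambda/(n-k)}/d_\lambda$, the number of standard Young tableaux of the skew shape divided by $d_\lambda$, and it vanishes unless $\lambda_1\ge n-k$. Averaging over the truncated Poisson law of $M_t$ gives
\[
\hat\nu_\lambda=\mathbf E\bigl[f^{\lambda/(n-M_t)}\bigr]/d_\lambda .
\]
The $L^2$ bound is therefore
\[
\sum_{\lambda\ne(n)} d_\lambda K_{\lambda,(l^m)}\,|r_\lambda^t-\hat\nu_\lambda|^2 .
\]
We need this to be $n^{-2+o(1)}$. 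Let $j=n-\lambda_1$ and let $\bar\lambda\vdash j$ be $\lambda$ with its first row removed.

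\emph{Small $j$, say $j\le n^{\delta}$.} Here I would expand both scalars to first order. The content formula gives $r_\lambda=1-2j(n-j+1)/n^2+O(j^2/n^2)$, so
\[
r_\lambda^t=\gamma_t^{-j}\cdot\frac{\text{(explicit factor in }\bar\lambda)}{(\ldots)}\cdot\bigl(1+O(j^2\log n/n)\bigr).
\]
In parallel, $f^{\lambda/(n-k)}/d_\lambda\approx (k)_j\,d_{\bar\lambda}/\bigl((n)_j\, d_{\bar\lambda}\ldots\bigr)$, and taking the Poisson expectation $\mathbf E[(M_t)_j]\approx\gamma_t^j$ matches the main term. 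This mirrors the Jain--Sawhney comparison. The result is a relative error of order $j^2\log n/n$, which squared gives $n^{-2+o(1)}$ times the main term.

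The main term per level is $\sum_{\bar\lambda\vdash j}d_\lambda K_{\lambda,(l^m)}\,r_\lambda^{2t}$. Since $r_\lambda^{2t}\approx (n^2/l)^{-j}e^{-2cj}$, we need a Kostka estimate of the shape
\[
d_\lambda K_{\lambda,(l^m)}\le \frac{(nm)^j}{j!}\,d_{\bar\lambda}^2\,e^{O(j)}\qquad\text{for } j\le n^{\delta}.
\]
Note that $nm=n^2/l$. I would prove it in two parts: $d_\lambda\le\binom nj d_{\bar\lambda}$, and $K_{\lambda,(l^m)}\le m^j d_{\bar\lambda}/j!$ up to $e^{O(j)}$. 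The second part comes from encoding a semistandard tableau of content $(l^m)$ by the entries in rows $2,3,\ldots$, which form a semistandard filling of $\bar\lambda$ with alphabet $[m]$; the number of such fillings is at most $m^j d_{\bar\lambda}/j!\cdot e^{O(j)}$. With these bounds the small-$j$ sum is a convergent geometric-type series multiplied by $n^{-2+o(1)}$.

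\emph{Large $j$.} Here I would bound $|r_\lambda^t|^2$ and $|\hat\nu_\lambda|^2$ separately. For $r_\lambda^t$ I would use the standard Diaconis--Shahshahani monotonicity bounds for $\chi_\lambda(\tau)/d_\lambda$, which give super-polynomial decay in $j$ once $j\ge n^\delta$. For $\hat\nu_\lambda$, it is supported on $j\le M_t$, and $M_t$ has Poisson tails at level $\gamma_t=e^{-c}\sqrt l$. I would also treat the case where $\lambda'_1$ is large, i.e.\ near-column shapes including sign-type representations, by the usual conjugation symmetry together with laziness. Against these decay bounds I would use the crude estimates $d_\lambda\le\sqrt{n!}$-type bounds and $K_{\lambda,(l^m)}\le K_{\lambda,(1^n)}=d_\lambda$ restricted appropriately.

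The main obstacle is the uniform Kostka estimate in the intermediate range, where $j$ is comparable to $\sqrt l$ up to $n^{\delta}$ and $\gamma_t^{j}$ is no longer small relative to the weights. There the first-order expansions lose accuracy, while the crude large-$j$ bounds are not yet strong enough. My first attempt would be to sharpen the bound $K_{\lambda,(l^m)}\le m^j d_{\bar\lambda}e^{O(j)}/j!$ so that it holds uniformly for $j\le n/2$. I would then feed it into the Diaconis--Shahshahani bound $|r_\lambda|^{t}\le\exp(-2tj(n-j)/n^2)$. This should leave a sum of the form $\sum_j\binom nj m^j(n^2/l)^{-j(1-j/n)}$, which I would control directly.
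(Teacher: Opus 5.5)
Your overall architecture matches the paper's: Cauchy--Schwarz, the quotient Plancherel identity with weights $d_\lambda K_{\lambda,(l^m)}$, and first-order matching of $r_\lambda^t$ with $\hat\nu_\lambda$ for small $j$. The small-$j$ comparison itself is fine. But the Kostka estimate you build on, $K_{\lambda,(l^m)}\le m^j d_{\bar\lambda}e^{O(j)}/j!$, is false once $j\gg m$. For $m=2$ and $\lambda=(n-j,j)$ with $j\le l$ there is exactly one semistandard tableau, so $K_{\lambda,(l^m)}=1$, whereas $2^je^{O(j)}/j!\to0$. Your encoding only gives $s_{\bar\lambda}(1^m)=d_{\bar\lambda}\prod_{x\in\bar\lambda}(m+c(x))/j!$, and the content product is $m^je^{O(j)}$ only when $j=O(m)$. (Also, your displayed target $(nm)^jd_{\bar\lambda}^2e^{O(j)}/j!$ would only give level sums of size $e^{O(j)}$, which are not summable; the $1/j!$ from $\binom{n}{j}\le n^j/j!$ got lost.) For small $j$ this is easily repaired as in the paper: $K_{\lambda,(l^m)}\le m^j$ and $d_\lambda\le n^j/\sqrt{j!}$ bound level $j$ by $p(j)e^{-2cj}/\sqrt{j!}$, with $p(j)$ the number of partitions of $j$. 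The same crude pair, against $|r_\lambda|\le 1-2j/n+2j^2/n^2$, also handles $\log n\le j\le 0.001n$, because $1/\sqrt{j!}$ absorbs the loss $\exp(4tj^2/n^2)$. So the \emph{intermediate range} you single out is not where the difficulty lies.

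The genuine gap is the range where $j_*:=n-\max(\lambda_1,\lambda_1')$ is of order $n$, and your plan there does not close. Even granting the (false) uniform Kostka bound, the sum $\sum_j\binom{n}{j} m^j(n^2/l)^{-j(1-j/n)}$ has terms of size about $2^nl^{-n/4}$ at $j=n/2$. That is exponentially large for $l<16$ (e.g.\ $l=2$, which the statement allows), because the relaxation $\exp(-2tj(n-j)/n^2)$ is too lossy once $j/n$ is of order one. The crude weights $K_{\lambda,(l^m)}\le d_\lambda\le\sqrt{n!}$ cannot beat $|r_\lambda|^{2t}\le 2^{-2t}=n^{-(\log 2+o(1))n}$ for shapes with $\lambda_1,\lambda_1'\le n/2$ and bounded $l$. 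Conjugation symmetry is unavailable, because $d_\lambda K_{\lambda,(l^m)}$ is not invariant under $\lambda\mapsto\lambda'$. With $K_{\lambda,(l^m)}\le d_\lambda$, the resulting bound for near-column shapes is about $\sum_{j'}(e^{-2c}l)^{j'+1}/(j'!\,n^2)$, which is not small once $l\gg\log n$. Likewise, Poisson tails of $M_t$ alone do not control $\sum_\lambda d_\lambda K_{\lambda,(l^m)}|\hat\nu_\lambda|^2$. You need the factor $d_{\bar\lambda}/d_\lambda\approx\binom{n}{j}^{-1}$, i.e.\ $|\hat\nu_\lambda|\le\frac{n^j}{(n)_j}e^{-2jt/n}$.

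The paper closes this range with three ingredients absent from your proposal:
\begin{enumerate}
\item $K_{\lambda,(l^m)}=0$ whenever $\lambda_1'>m$, which kills every near-column shape.
\item The sharper eigenvalue bound $|r_\lambda|\le\max(\lambda_1,\lambda_1')/n=1-j_*/n\le\exp(-(1+\epsilon)j_*/n)$ for $j_*\ge 0.001n$; the extra $\epsilon$ absorbs factors $e^{O(n)}$.
\item The new weight estimate $\sum_{\lambda_1=n-j}d_\lambda K_{\lambda,(l^m)}\le\binom{n}{j}^2m^j$, and its column analogue. It is proved by deleting the first row, noting that the remaining tableau has at most $\binom{n}{j}$ possible weights, and using monotonicity of Kostka numbers in the dominance order of the weight to reduce to the balanced weight $\alpha_0$. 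For $\alpha_0$, $\sum_{\mu\vdash j}d_\mu K_{\mu,\alpha_0}$ is a multinomial coefficient at most $m^j$.
\end{enumerate}
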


For the rest of the section, for notational ease, we drop off the subscripts and simply write $n,m,l$ for $n_N,m_N,l_N$. Now, let us recall necessary background from representation theory.

\subsection{Nonabelian Fourier transform and related notations}

Given a finite group $G$, we let $\widehat G$ denote the set of irreducible representations. We define convolution $f_1,f_2:G\rightarrow\C$ to be
$$(f_1*f_2)(z):=\sum_{x\in G}f_1(x)f_2(x^{-1}z).$$
For $\rho\in\widehat G$, let $V_\rho$ be the representation space of $\rho$, and let $d_\rho=\dim V_\rho$ denote the dimension of $V_\rho$. The nonabelian Fourier transform for a function $f:G\rightarrow\C$ is the map $\widehat f$ given by
$$\widehat f(\rho):=\sum_{g\in G}f(g)\rho(g)\in\Mat_{d_\rho}(\C),\quad\forall \rho\in\widehat G.$$
In this way, the mapping $f\mapsto(\widehat f(\rho))_{\rho\in\hat G}$ gives an isomorphism between $\C$-algebras $\C[G]$ and $\prod_{\rho\in\hat G}\Mat_{d_\rho}(\C)$. 

We benefit from the above Fourier transform in several ways. First, it transfers convolution into multiplication, i.e., for all $f_1,f_2:G\rightarrow\C$ and $\rho\in\widehat G$, we have
\begin{equation}\label{eq: convolution to multiplication}
\widehat{f_1*f_2}(\rho)=\widehat f_1(\rho)\cdot\widehat f_2(\rho).
\end{equation}
Second, ensured by Parseval's theorem, it preserves the $L^2$ norm on both sides. Explicitly, let $\{e_g\}_{g\in G}$ be the natural $\C$-basis of $\C[G]$. Then, for $f=\sum_{g\in G}a_g e_g\in\C[G]$, we have
$$\sum_{g\in G}|a_g|^2=\frac{1}{|G|}\sum_{\rho\in\hat G}d_\rho\Tr\left(\hat f(\rho)\hat f(\rho)^\dagger\right).$$
Here, the dagger superscript refers to the conjugate transpose matrix.

Now, suppose $H$ is a subgroup of $G$. For all $\rho\in \hat G$, let
$$
V_\rho^H:=\{v\in V_\rho:\rho(h)v=v\ \text{for all }h\in H\}
$$
be the $H$-fixed subspace, and
$$(V_\rho^H)^\perp:=\{v\in V_\rho:\sum_{h\in H}\rho(h)v=0\},$$
which gives natural decomposition of $H$-modules $V_\rho=V_\rho^H\oplus(V_\rho^H)^\perp$. We denote by $\proj_{V_\rho^H}\in\Mat_{d_\rho}(\C)$ the projection map onto $V_\rho^H$. 

\begin{prop}\label{prop: quotient L2 formula}
Let \(G\) be a finite group and \(H\leq G\) be a subgroup. Let
\(\mu_1,\mu_2:G\to\C\) be two class functions which are probability measures on \(G\).
For each irreducible representation \(\rho\) of \(G\), let
\(a_\rho^{(1)},a_\rho^{(2)}\in\C\) be the scalars such that
\begin{equation}
\sum_{g\in G}\mu_i(g)\rho(g)=a_\rho^{(i)}I_{d_\rho},
\qquad i=1,2.
\end{equation}
Denote by \(\sim_H\backslash\mu_1,\sim_H\backslash\mu_2\) the probability measures on
the left coset space \(H\backslash G\) induced by \(\mu_1,\mu_2\), respectively. Then, we have
\begin{equation}\label{eq: L2 quotient formula}
D_{L^2}(\sim_H\backslash\mu_1,\sim_H\backslash\mu_2)^2
=
\frac{|H|}{|G|}
\sum_{\rho}
d_\rho\,\dim(V_\rho^H)\,
\left|a_\rho^{(1)}-a_\rho^{(2)}\right|^2.
\end{equation}
Here, \(D_{L^2}\) denotes the \(L^2\)-distance between measures on the quotient space
\(H\backslash G\), while on the right-hand side, we sum over all nontrivial irreducible representations $\rho$ of $G$.
\end{prop}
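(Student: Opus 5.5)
We will prove \Cref{prop: quotient L2 formula} by lifting functions from the coset space to $G$, applying Parseval on $G$, and then evaluating the Fourier transforms of the lifted functions using the class-function hypothesis. First we fix the normalization. We take $D_{L^2}(\alpha,\beta)^2:=|H\backslash G|\sum_{x\in H\backslash G}|\alpha(x)-\beta(x)|^2$, the $L^2$ norm of the density difference with respect to the uniform measure. The factor $|H|/|G|$ is consistent with this choice, as the computation below shows.

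The induced measure is $(\sim_H\backslash\mu_i)(Hg)=\sum_{h\in H}\mu_i(hg)$. Define the lift $f_i:=\frac{1}{|H|}\bbone_H*\mu_i$, i.e. $f_i(g)=\frac{1}{|H|}\sum_{h\in H}\mu_i(h^{-1}g)$. This function is constant on each coset $Hg$, where it equals $\frac{1}{|H|}(\sim_H\backslash\mu_i)(Hg)$. Consequently
$$\sum_{g\in G}|f_1(g)-f_2(g)|^2=\frac{1}{|H|}\sum_{x\in H\backslash G}|(\sim_H\backslash\mu_1)(x)-(\sim_H\backslash\mu_2)(x)|^2.$$
Parseval on $G$ turns the left side into $\frac{1}{|G|}\sum_\rho d_\rho\Tr(AA^\dagger)$, where $A=\widehat{f_1-f_2}(\rho)$. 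By \eqref{eq: convolution to multiplication},
$$\widehat{f_i}(\rho)=\Big(\tfrac{1}{|H|}\sum_{h\in H}\rho(h)\Big)\widehat{\mu_i}(\rho)=\proj_{V_\rho^H}\cdot a^{(i)}_\rho I_{d_\rho}.$$
Here we use that $\frac1{|H|}\sum_h\rho(h)$ is exactly the projection onto $V_\rho^H$ along $(V_\rho^H)^\perp$. We take $\rho$ unitary, so this projection is orthogonal. Its trace is then $\dim V_\rho^H$, so
$$\Tr(AA^\dagger)=|a^{(1)}_\rho-a^{(2)}_\rho|^2\Tr(\proj_{V_\rho^H})=|a^{(1)}_\rho-a^{(2)}_\rho|^2\dim V_\rho^H.$$

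Combining the two displays gives
$$\sum_{x}|\Delta(x)|^2=\frac{|H|}{|G|}\sum_\rho d_\rho\dim(V_\rho^H)|a^{(1)}_\rho-a^{(2)}_\rho|^2,$$
where $\Delta$ denotes the difference of the two induced measures. For the trivial representation both $a_\rho^{(i)}=1$, since the $\mu_i$ are probability measures, so the trivial term vanishes and the sum may be restricted to nontrivial $\rho$.

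The one delicate point is reconciling the prefactor with the normalization of $D_{L^2}$. With the raw sum $\sum_x|\Delta(x)|^2$ the prefactor is $|H|/|G|$ as computed. With the density normalization above one gains an extra factor $|G|/|H|$, which would change the prefactor. We will therefore check the convention against a trivial example (say $H=G$) and define $D_{L^2}$ on $H\backslash G$ as the plain $\ell^2$ norm $\big(\sum_x|\Delta(x)|^2\big)^{1/2}$, which matches \eqref{eq: L2 quotient formula} exactly. Everything else in the proof is routine.
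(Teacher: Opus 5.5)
Your proof is correct and is essentially the paper's argument: you lift the coset-level difference to \(G\) by convolving with the indicator of \(H\) (the paper uses \(e_H*(\mu_1-\mu_2)\) without your harmless \(1/|H|\) factor), identify \(\frac{1}{|H|}\sum_{h\in H}\rho(h)\) with the orthogonal projection onto \(V_\rho^H\) for unitary \(\rho\), apply Plancherel on \(G\), and note that the trivial term vanishes.

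Your final convention, \(D_{L^2}\) as the plain \(\ell^2\) norm on \(H\backslash G\), is the one the paper uses; it is what makes the later Cauchy--Schwarz step with the factor \(n!/(l!)^m\) work. You should delete your opening claim that the density normalization is consistent with \(|H|/|G|\), since you later show yourself that it is not. Also, \(H=G\) is a degenerate check because both sides vanish; \(H=\{e\}\) is the case that actually pins down the prefactor.
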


\begin{proof}%[Proof of \Cref{prop: quotient L2 formula}]
Let $u:=\mu_1-\mu_2$. Since \(\mu_1,\mu_2\) are class functions, so is \(u\). Moreover, subtracting the two scalar
identities gives
\begin{equation}\label{eq: scalar difference quotient}
\sum_{g\in G}u(g)\rho(g)
=
\bigl(a_\rho^{(1)}-a_\rho^{(2)}\bigr)I_{d_\rho}
\end{equation}
for every irreducible representation \(\rho\) of \(G\).

Since \(\mu_1,\mu_2\) are probability measures on \(G\), we have
\[
\sum_{g\in G}u(g)=0.
\]
In particular, if \(\rho\) is the trivial representation, then
\[
a_\rho^{(1)}-a_\rho^{(2)}=0.
\]

Now define
\[
F:=e_H*u,
\qquad\text{where}\qquad
e_H:=\sum_{h\in H}e_h.
\]
Then for every \(x\in G\),
\[
F(x)=\sum_{h\in H}u(h^{-1}x).
\]
Since \(h\mapsto h^{-1}\) is a bijection on \(H\), this becomes
\[
F(x)=\sum_{h\in H}\mu_1(hx)-\sum_{h\in H}\mu_2(hx).
\]
Thus \(F\) is constant on each left coset \(Hx\in H\backslash G\), and therefore
\begin{equation}\label{eq: quotient norm compare}
\|F\|_{L^2(G)}^2
=
|H|\,D_{L^2}(\sim_H\backslash\mu_1,\sim_H\backslash\mu_2)^2.
\end{equation}

We now compute the Fourier transform of \(F\). For each irreducible representation \(\rho\),
using \eqref{eq: scalar difference quotient}, we obtain
\[
\sum_{x\in G}F(x)\rho(x)
=
\left(\sum_{h\in H}\rho(h)\right)
\left(\sum_{g\in G}u(g)\rho(g)\right)
=
\left(\sum_{h\in H}\rho(h)\right)
\bigl(a_\rho^{(1)}-a_\rho^{(2)}\bigr)I_{d_\rho}.
\]
By the decomposition
\[
V_\rho=V_\rho^H\oplus (V_\rho^H)^\perp,
\]
the operator \(\sum_{h\in H}\rho(h)\) acts by \(|H|\) on \(V_\rho^H\) and by \(0\) on
\((V_\rho^H)^\perp\). Hence
\[
\sum_{h\in H}\rho(h)=|H|\,\mathrm{proj}_{V_\rho^H},
\]
and so
\[
\sum_{x\in G}F(x)\rho(x)
=
|H|\bigl(a_\rho^{(1)}-a_\rho^{(2)}\bigr)\mathrm{proj}_{V_\rho^H}.
\]
Taking Hilbert--Schmidt norms, we get
\[
\left\|
\sum_{x\in G}F(x)\rho(x)
\right\|_{HS}^2
=
|H|^2\left|a_\rho^{(1)}-a_\rho^{(2)}\right|^2
\|\mathrm{proj}_{V_\rho^H}\|_{HS}^2.
\]
Since \(\mathrm{proj}_{V_\rho^H}\) is an orthogonal projection,
\[
\|\mathrm{proj}_{V_\rho^H}\|_{HS}^2=\dim(V_\rho^H).
\]
Therefore
\[
\left\|
\sum_{x\in G}F(x)\rho(x)
\right\|_{HS}^2
=
|H|^2\dim(V_\rho^H)\left|a_\rho^{(1)}-a_\rho^{(2)}\right|^2.
\]

Applying Plancherel's formula on \(G\), we obtain
\[
\|F\|_{L^2(G)}^2
=
\frac1{|G|}
\sum_{\rho}
d_\rho
\left\|
\sum_{x\in G}F(x)\rho(x)
\right\|_{HS}^2
=
\frac{|H|^2}{|G|}
\sum_{\rho}
d_\rho\,\dim(V_\rho^H)\,
\left|a_\rho^{(1)}-a_\rho^{(2)}\right|^2.
\]
Since the trivial representation contributes \(0\), the sum may be restricted to all nontrivial
irreducible representations of \(G\). Comparing this with \eqref{eq: quotient norm compare},
we conclude that
\[
D_{L^2}(\sim_H\backslash\mu_1,\sim_H\backslash\mu_2)^2
=
\frac{|H|}{|G|}
\sum_{\rho}
d_\rho\,\dim(V_\rho^H)\,
\left|a_\rho^{(1)}-a_\rho^{(2)}\right|^2,
\]
where the sum ranges over all nontrivial irreducible representations \(\rho\) of \(G\).
This proves the proposition.
\end{proof}

In this paper, we always take $G=\mathfrak{S}_n$ and $H=(\S_l)^m$. Following the standard construction of Specht modules, the irreducible representations of $\mathfrak{S}_n$ are indexed by partitions of $n$. In particular, the trivial irreducible representation corresponds to the partition $(n)$. Due to Frobenius reciprocity, the dimension of $V_\lambda^{(\S_l)^m}$ is given by
$$\dim V_\lambda^{(\S_l)^m}=\left\langle\bbone_{(\S_l)^m},\Res_{(\S_l)^m}^{\S_n}\chi_\lambda\right\rangle_{(\S_l)^m}=\left\langle\Ind_{(\S_l)^m}^{\S_n}\bbone_{(\S_l)^m},\chi_\lambda\right\rangle_{\S_n}=K_{\lambda,(l^m)}.$$
Here, $K_{\lambda,(l^m)}$ denotes the Kostka number. From now on, we use $\l\vdash n$ to denote that $\l$ is a partition of $n$. In particular, we have $d_\lambda=d_{\lambda'}=K_{\lambda,(1^n)}$ when $\l$ has size $n$. Moreover, the fact $\sum_{\l\vdash n}d_\l^2=n!$ implies the bound $d_\l\le \sqrt {n!}$ for all $\l\vdash n$.

Given a partition $\l=(\l_1,\ldots,\l_j)$ with $\l_1\ge\cdots\ge\l_j$, we let $\l'=(\l_1',\l_2',\ldots)$ denote the conjugate partition, $\l^*=(\l_2,\ldots,\l_j)$ be the partition obtained from $\l$ by deleting its first row, and
\[
\lambda^\bullet:=\bigl((\lambda')^*\bigr)'\,,
\]
which is obtained from $\lambda$ by deleting its first column.

Let 
\begin{equation}\label{eq: set of long first row partitions}
\mathcal L \ :=\ \Bigl\{\lambda\vdash n:\ \lambda_1\ge n-\log n\Bigr\}
\end{equation}
be the set of partitions with long first row. For the rest of this section, we  usually use the letter $r$ for $n-\l_1$.

Probability measures on $\mathfrak{S}_n$ shall be viewed as real-valued functions. In our applications, we will furthermore restrict attention to functions $f$ which are class functions, i.e., functions that are constants on conjugacy classes. 
\begin{lemma}[Schur's lemma]
Suppose $f:\mathfrak{S}_n\rightarrow \C$ is a class function. Then, we have
$$\widehat f(\lambda)=\frac{\sum_{g\in G}f(g)\chi_\lambda(g)}{d_\lambda}\Id_{d_\lambda},$$
where $\chi_\lambda$ is the character corresponding to $\lambda\vdash n$.
\end{lemma}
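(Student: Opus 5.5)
The plan is to prove the formula directly from the definition of the nonabelian Fourier transform, using that the class sums are central in \(\C[G]\). For each \(\lambda\vdash n\), write \(\rho_\lambda\) for the corresponding irreducible representation. For each conjugacy class \(C\subseteq G\), set
\[
z_C:=\sum_{g\in C}e_g .
\]
I would carry out three steps.

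\textbf{Step 1: decompose \(f\) along conjugacy classes.} Since \(f\) is a class function, it takes a single value \(f(C)\) on each class \(C\), and so
\[
\widehat f(\lambda)=\sum_{C}f(C)\,\rho_\lambda(z_C).
\]
It therefore suffices to identify each \(\rho_\lambda(z_C)\).

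\textbf{Step 2: each \(\rho_\lambda(z_C)\) is a scalar.} For any \(h\in G\), conjugation by \(h\) permutes \(C\). Hence
\[
\rho_\lambda(h)\,\rho_\lambda(z_C)\,\rho_\lambda(h)^{-1}=\rho_\lambda(z_C),
\]
so \(\rho_\lambda(z_C)\) commutes with the whole of \(\rho_\lambda(G)\). Because \(\rho_\lambda\) is irreducible over \(\C\), the classical Schur lemma says any such intertwiner is a scalar. Thus \(\rho_\lambda(z_C)=c_C\Id_{d_\lambda}\) for some \(c_C\in\C\).

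\textbf{Step 3: determine the scalars by taking traces.} Summing over classes, \(\widehat f(\lambda)=c\,\Id_{d_\lambda}\) for a single scalar \(c\). Taking the trace of both sides gives
\[
c\,d_\lambda=\Tr\widehat f(\lambda)=\sum_{g}f(g)\Tr\rho_\lambda(g)=\sum_g f(g)\chi_\lambda(g).
\]
Dividing by \(d_\lambda\) yields exactly the stated formula.

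There is no real obstacle. The only point needing care is the appeal to the classical Schur lemma, which requires the ground field \(\C\) to be algebraically closed so that the intertwiner has an eigenvalue; its eigenspace is then \(G\)-stable, hence equal to all of \(V_\lambda\) by irreducibility. If one prefers to avoid the lemma, an alternative route is the averaging trick:
\[
\widehat f(\lambda)=\frac1{|G|}\sum_h \rho_\lambda(h)\,\widehat f(\lambda)\,\rho_\lambda(h)^{-1},
\]
followed by orthogonality of matrix coefficients. I would present the short route above.
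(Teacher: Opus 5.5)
Your proposal is correct and matches the paper's approach: the paper gives no separate proof and simply invokes the classical Schur lemma, which is exactly your argument. You show that $\widehat f(\lambda)$ commutes with $\rho_\lambda(G)$, conclude it is a scalar, and identify the scalar by taking traces. The class-sum decomposition in Step 1 is harmless but optional, since you could conjugate $\widehat f(\lambda)$ by $\rho_\lambda(h)$ directly and use that $f$ is a class function.
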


In particular, let $P_n$ be the probability measure over $\mathfrak{S}_n$ as in \eqref{eq: measure of k-cycle}. Then, its Fourier transform is given by
\begin{equation}\label{eq: fourier transform of Pn}
\widehat P_{n}(\lambda)=\left(\frac 1n+\frac{n-1}{n}\frac{\chi_\lambda(\tau)}{d_\lambda}\right)\Id_{d_\lambda},\quad \lambda\vdash n.
\end{equation}
where $\tau$ is an arbitrary transposition. In particular, by the hook length formula, we have $d_\l=d_{\l'}$.

% $$\widehat{U_{\mathfrak{S}_n}}(\lambda)=\begin{cases}
% \Id_{d_\l} & \lambda=(n)\\
% 0 & \text{else}
% \end{cases},$$
% $$\widehat{U_{\mathfrak{A}_n}}(\lambda)=\begin{cases}
% \Id_{d_\l} & \lambda=(1^n),(n)\\
% 0 & \text{else}
% \end{cases},$$
% $$\widehat{U_{\mathfrak{A}_n^c}}(\lambda)=\begin{cases}
% \Id_{d_\l} & \lambda=(1^n)\\
% -\Id_{d_\l} & \lambda=(n)\\
% 0 & \text{else}
% \end{cases}.$$

\subsection{Character estimates for $P_n^{*t}$}

Following \eqref{eq: convolution to multiplication} and \eqref{eq: fourier transform of Pn}, it is clear that
$$\widehat {P_n^{*t}}(\l)=\left(\frac 1n+\frac{n-1}{n}\frac{\chi_\lambda(\tau)}{d_\lambda}\right)^t\Id_{d_\l},\quad\l\vdash n.$$

\begin{lemma}[{\cite[Lemma 7]{diaconis1981generating}}]\label{lem: explicit expression of character}
Let $\lambda\vdash n$ and $\tau\in\S_n$ be a transposition. Then, we have
$$\chi_\lambda(\tau)=-\chi_{\lambda'}(\tau)=d_\lambda\binom n2^{-1}\left(\sum_{i=1}^n\binom {\lambda_i}{2}-\binom {\lambda_i'}{2}\right).$$
\end{lemma}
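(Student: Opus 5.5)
The plan is to compute the action of the central element $C:=\sum_{1\le i<j\le n}(i\,j)\in\C[\S_n]$ (the class sum of all transpositions) on $V_\lambda$ in two ways, and then read off $\chi_\lambda(\tau)$.

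\textbf{Step 1 (trace computation).} Since $C$ is central in $\C[\S_n]$, Schur's lemma gives that $\lambda(C)$ is a scalar $s_\lambda\Id_{d_\lambda}$. Taking traces, and using that all $\binom n2$ transpositions are conjugate, we get
$$d_\lambda s_\lambda=\binom n2\chi_\lambda(\tau),\qquad\text{so}\qquad \chi_\lambda(\tau)=d_\lambda\binom n2^{-1}s_\lambda.$$

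\textbf{Step 2 (identify $s_\lambda$ as the content sum).} I will use the Jucys--Murphy elements $X_k:=\sum_{i<k}(i\,k)$ for $2\le k\le n$. Grouping transpositions by their larger entry gives $C=X_2+\cdots+X_n$. By the standard Okounkov--Vershik (Young seminormal basis) fact, each $X_k$ acts diagonally on the Gelfand--Tsetlin basis $\{v_T\}$ of $V_\lambda$, indexed by standard Young tableaux $T$ of shape $\lambda$. Its eigenvalue on $v_T$ is the content $c_T(k)=(\text{column})-(\text{row})$ of the box of $T$ containing $k$. Hence
$$s_\lambda=\sum_{\square\in\lambda}c(\square),$$
independently of $T$. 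This is the one genuinely nontrivial input; I would cite it rather than reprove it. If a self-contained route is preferred, Frobenius's character formula evaluated at cycle type $(2,1^{n-2})$ yields the same content sum after a computation with $\Delta(\ell)$, where $\ell_i=\lambda_i+n-i$.

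\textbf{Step 3 (evaluate the content sum).} The boxes in row $i$ have contents $j-i$ for $1\le j\le\lambda_i$. Separating the column index from the row index,
$$\sum_{\square}c(\square)=\sum_i\sum_{j=1}^{\lambda_i}(j-1)-\sum_i(i-1)\lambda_i=\sum_i\binom{\lambda_i}{2}-\sum_i(i-1)\lambda_i.$$
Counting the second sum column by column, column $j$ contributes $0+1+\cdots+(\lambda'_j-1)=\binom{\lambda'_j}2$. Therefore
$$\sum_{\square}c(\square)=\sum_{i=1}^n\left(\binom{\lambda_i}2-\binom{\lambda'_i}2\right),$$
where padding with zero parts up to index $n$ is harmless. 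Combined with Step 1, this gives the explicit formula.

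\textbf{Step 4 (the conjugate identity).} Transposing the diagram negates every content, so $s_{\lambda'}=-s_\lambda$. Since $d_{\lambda'}=d_\lambda$ (hook length formula, as noted above), Step 1 gives $\chi_{\lambda'}(\tau)=-\chi_\lambda(\tau)$. Alternatively, $V_{\lambda'}\cong V_\lambda\otimes\mathrm{sgn}$ and $\mathrm{sgn}(\tau)=-1$.

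The only step I expect to need care is Step 2, namely justifying the content eigenvalues of the Jucys--Murphy elements; the rest is bookkeeping.
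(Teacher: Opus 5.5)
Your proof is correct, but it does not follow the paper's route: the paper gives no argument of its own and simply cites Diaconis--Shahshahani \cite{diaconis1981generating}. There the character ratio is taken from Frobenius's classical evaluation of characters at a transposition, in the form
\[
\frac{\chi_\lambda(\tau)}{d_\lambda}=\frac{1}{n(n-1)}\sum_j\bigl(\lambda_j^2-(2j-1)\lambda_j\bigr).
\]
This agrees with the stated expression, because $\lambda_j^2-(2j-1)\lambda_j=2\binom{\lambda_j}{2}-2(j-1)\lambda_j$ and $\sum_j(j-1)\lambda_j=\sum_j\binom{\lambda'_j}{2}$; that second identity is exactly the column-by-column count in your Step 3.

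Your route is different. You compute the scalar by which the central class sum $C$ of transpositions acts on $V_\lambda$. You decompose $C=X_2+\cdots+X_n$ into Jucys--Murphy elements and read off the scalar from their content eigenvalues on the Gelfand--Tsetlin basis.

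The two arguments are comparably self-contained. Each imports one nontrivial fact: Frobenius's character formula in one case, the Okounkov--Vershik spectrum of the Jucys--Murphy elements in the other.

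What your approach buys is transparency. The numerator $\sum_i\binom{\lambda_i}{2}-\binom{\lambda'_i}{2}$ appears visibly as the content sum of $\lambda$. The antisymmetry $\chi_{\lambda'}(\tau)=-\chi_\lambda(\tau)$ is then immediate, since transposing the diagram negates every content and $d_{\lambda'}=d_\lambda$. Your Step 1 is also useful because it turns any formula for the central character into a character value.

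The Frobenius route is the classical one, and it extends more directly to explicit character values at other cycle types. That is the kind of input the paper's outlook on $k$-cycle shuffles would require.

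Your fallback remark in Step 2 describes precisely the cited source's route, so either choice yields a complete proof.
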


\begin{lemma}
For all $\lambda=(\lambda_1,\lambda_2,\ldots)\vdash n$, we have
\begin{equation}\label{eq: short first row}
-\frac{\lambda_1'}{n}\le\frac 1n+\frac{n-1}{n}\frac{\chi_\l(\tau)}{d_\l}\le\frac{\lambda_1}{n},
\end{equation}
\begin{equation}\label{eq: middle first row}
-1+2\frac{\lambda_1}{n}\le\frac 1n+\frac{n-1}{n}\frac{\chi_\l(\tau)}{d_\l}\le1-2\frac{\lambda_1}{n}+2\left(\frac{\lambda_1}{n}\right)^2.
\end{equation}
In particular, when $\lambda_1\ge n-n^{o(1)}$, set $r:=n-\l_1$. Then, we have
\begin{equation}\label{eq: long first row}
\frac 1n+\frac{n-1}{n}\frac{\chi_\l(\tau)}{d_\l}=1-2\frac{r}{n}+O(n^{-2+o(1)}).
\end{equation}
\end{lemma}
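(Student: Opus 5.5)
Write $f(\lambda):=\frac1n+\frac{n-1}{n}\frac{\chi_\lambda(\tau)}{d_\lambda}$. By \Cref{lem: explicit expression of character},
\[
f(\lambda)=\frac1n+\frac{2}{n^2}\sum_i\left(\binom{\lambda_i}{2}-\binom{\lambda_i'}{2}\right)=\frac1n+\frac1{n^2}\sum_i\bigl(\lambda_i^2-\lambda_i'^2\bigr),
\]
where the second equality uses $\sum_i\lambda_i=\sum_i\lambda_i'=n$, so the linear terms cancel. The plan is to bound $\sum_i\lambda_i^2$ and $\sum_i\lambda_i'^2$ separately, using only the constraints $\lambda_i\le\lambda_1$, $\lambda_i'\le\lambda_1'$ and $\sum_i\lambda_i=\sum_i\lambda_i'=n$.

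For \eqref{eq: short first row}, use $\sum_i\lambda_i^2\le\lambda_1\sum_i\lambda_i=n\lambda_1$ together with $\sum_i\lambda_i'^2\ge\sum_i\lambda_i'=n$. This gives
\[
f(\lambda)\le\frac1n+\frac{n\lambda_1-n}{n^2}=\frac{\lambda_1}{n}.
\]
The lower bound follows symmetrically. Use $\sum_i\lambda_i'^2\le n\lambda_1'$ and $\sum_i\lambda_i^2\ge n$, which give $f\ge\frac1n+\frac{n-n\lambda_1'}{n^2}\ge-\frac{\lambda_1'}{n}$.

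For the upper bound in \eqref{eq: middle first row}, keep the first row and bound the rest: $\sum_{i\ge2}\lambda_i^2\le(n-\lambda_1)^2$, since the remaining rows sum to $n-\lambda_1$. Again $\sum_i\lambda_i'^2\ge n$. Hence
\[
f\le\frac1n+\frac{\lambda_1^2+(n-\lambda_1)^2-n}{n^2}=1-\frac{2\lambda_1}{n}+\frac{2\lambda_1^2}{n^2}.
\]
For the lower bound, note that $\lambda_1'\le n-\lambda_1+1$, since the first column uses one cell of the first row. Then
\[
\sum_i\lambda_i'^2\le\lambda_1'\,n\le n(n-\lambda_1+1),
\]
and with $\sum_i\lambda_i^2\ge\lambda_1^2$ this gives
\[
f\ge\frac1n+\frac{\lambda_1^2-n(n-\lambda_1+1)}{n^2}=\frac{\lambda_1^2}{n^2}+\frac{\lambda_1}{n}-1.
\]
It remains to check that this is at least $-1+\frac{2\lambda_1}{n}$. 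Putting $x=\lambda_1/n\in(0,1]$, the difference is $x^2-x$, which is $\le0$ on $(0,1]$, so this crude bound falls short. I would instead handle the column contribution more carefully, because the crude bound $\lambda_1'\le n-\lambda_1+1$ is lossy.

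There are two cases.
\begin{itemize}
\item If $\lambda_1\ge\lambda_1'$, use $\sum_i\lambda_i'^2\le\sum_i\lambda_i^2$. This holds because $\lambda$ dominates $\lambda'$ in that case (compare $f\ge\frac1n\ge0$ for self-comparable shapes). In the general subcase, $f\ge\frac1n>-1+2x$ whenever $x\le\frac12$.
\item If $x>\frac12$, then the first row exceeds half of $n$, so $\lambda_1'\le n-\lambda_1+1<\lambda_1$. The bound $\sum_i\lambda_i'^2\le(\lambda_1')^2+(n-\lambda_1')\cdot$(max other column) should then give the required $2x-1$.
\end{itemize}
This case analysis is the step I expect to be fiddliest. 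The sign/equality checks such as $(1^n)$ and $(n)$ pin down the correct tight cases.

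For \eqref{eq: long first row}, take $\lambda_1=n-r$ with $r=n^{o(1)}$. Then
\[
\sum_i\lambda_i^2=(n-r)^2+O(r^2),\qquad\sum_i\lambda_i'^2=n-r+O(r^2),
\]
where the second estimate holds because all columns except at most $r$ have length $1$, and the others have length at most $r+1$. Substituting, $f=\frac1n+1-\frac{2r}{n}+\frac{r^2}{n^2}-\frac{n-r}{n^2}+O(r^2/n^2)=1-\frac{2r}{n}+O(n^{-2+o(1)})$. The $\frac1n$ term cancels against $-\frac{n}{n^2}$ exactly.
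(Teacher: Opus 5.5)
Your proofs of \eqref{eq: short first row}, of the upper bound in \eqref{eq: middle first row}, and of \eqref{eq: long first row} are fine. For the last one, the paper instead sandwiches $f$ between the two sides of \eqref{eq: middle first row}, getting $1-2r/n\le f\le 1-2r/n+2r^2/n^2$. Your direct computation does not depend on the middle lower bound, so it stands on its own.

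The gap is the lower bound $f\ge -1+2\lambda_1/n$ in \eqref{eq: middle first row}, which your proposal never establishes. The case analysis you sketch rests on false claims:
\begin{itemize}
\item $\lambda_1\ge\lambda_1'$ forces neither $\lambda\trianglerighteq\lambda'$ nor $\sum_i\lambda_i'^2\le\sum_i\lambda_i^2$. For $\lambda=(6,2,2,2,2)$ one has $\lambda'=(5,5,1,1,1,1)$ and $\lambda_1=6>5=\lambda_1'$. Yet $\sum_i\lambda_i^2=52<54=\sum_i\lambda_i'^2$, so $f=3/49<1/14=1/n$.
\item ``$f\ge 1/n$ whenever $x\le\frac12$'' fails for $\lambda=(1^n)$. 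There $f=-1+2/n$, which is exactly the equality case of the bound you want.
\item $(1^n)$, and indeed every $\lambda$ with $\lambda_1<\lambda_1'$ (which forces $x\le\frac12$), falls into neither of your two bullets.
\item The second bullet is only asserted (``should then give''), never computed.
\end{itemize}

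The missing idea is one you already use for \eqref{eq: long first row}. The $\lambda_1$ columns exceed length $1$ by a total of exactly $n-\lambda_1$ cells, so $\sum_{j=1}^{\lambda_1}(\lambda_j'-1)=n-\lambda_1$. Hence $\sum_j(\lambda_j'-1)^2\le(n-\lambda_1)^2$, which gives
\[
\sum_j\lambda_j'^2\le \lambda_1+2(n-\lambda_1)+(n-\lambda_1)^2 .
\]
Pair this with $\sum_i\lambda_i^2\ge\lambda_1^2+(n-\lambda_1)$. You must keep the linear contribution of the lower rows here; using only $\lambda_1^2$ falls short by $(n-\lambda_1)/n^2$. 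This yields $\sum_i\lambda_i^2-\sum_j\lambda_j'^2\ge 2n\lambda_1-n^2-n$, hence $f\ge -1+2\lambda_1/n$ for every $\lambda$, with no case split. This is exactly the paper's argument, which it writes as $\sum_i\lambda_i'(\lambda_i'-1)\le(n-\lambda_1)+(n-\lambda_1)^2$ together with $\sum_i\lambda_i(\lambda_i-1)\ge\lambda_1(\lambda_1-1)$.
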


\begin{proof}
We first prove \eqref{eq: short first row}. In fact, following the expression in \Cref{lem: explicit expression of character}, we obtain the upper bound
$$\frac 1n+\frac{n-1}{n}\frac{\chi_\l(\tau)}{d_\l}\le\frac 1n+\frac{\sum_{i=1}^n(\lambda_i^2-\lambda_i)}{n^2}=\frac{\sum_{i=1}^n\lambda_i^2}{n^2}\le\frac{\sum_{i=1}^n\lambda_i\lambda_1}{n^2}=\frac{\lambda_1}{n}.$$
The lower bound of \eqref{eq: short first row} follows similarly.

We next prove \eqref{eq: middle first row}. On the one hand, we have the upper bound
$$\frac 1n+\frac{n-1}{n}\frac{\chi_\l(\tau)}{d_\l}\le\frac{\sum_{i=1}^n\lambda_i^2}{n^2}\le\frac{\lambda_1^2+(\sum_{i=2}^n\lambda_i)^2}{n^2}=1-2\frac{\lambda_1}{n}+2\left(\frac{\lambda_1}{n}\right)^2.$$
On the other hand, we have $\sum(\lambda_i'-1)=n-\lambda_1$, where the sum ranges over all positive parts of $\lambda'$. This leads to the lower bound
\begin{align}
\begin{split}
\frac 1n+\frac{n-1}{n}\frac{\chi_\l(\tau)}{d_\l}&\ge\frac 1n+\frac{\lambda_1^2-\lambda_1-\sum_{i=1}^n\lambda_i'(\lambda_i'-1)}{n^2}\\
&\ge\frac 1n+\frac{\lambda_1^2-\lambda_1-(n-\lambda_1)-(n-\lambda_1)^2}{n^2}\\
&=-1+2\frac{\l_1}{n}.
\end{split}
\end{align}
Finally, one can deduce \eqref{eq: long first row} directly from the upper and lower bound given in \eqref{eq: middle first row}.
\end{proof}

\begin{prop}\label{prop: f_P long first row}
Let $\l\in\mathcal{L}$, so that $r:=n-\l_1\le \log n$. Let $t=\frac n2(\log n-\frac 12\log l+c)$ for some absolute constant $c$. Then, we have
$$\left(\frac 1n+\frac{n-1}{n}\frac{\chi_\l(\tau)}{d_\l}\right)^t=(1+O(n^{-1+o(1)}))\exp(-2rt/n).$$
\end{prop}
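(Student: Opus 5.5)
We will prove the statement by taking logarithms and controlling the error term uniformly over $\l\in\mathcal L$. Since $r=n-\l_1\le\log n=n^{o(1)}$, the estimate \eqref{eq: long first row} applies and gives
\[
x_\l:=\frac 1n+\frac{n-1}{n}\frac{\chi_\l(\tau)}{d_\l}=1-\frac{2r}{n}+\varepsilon_\l,\qquad |\varepsilon_\l|=O(n^{-2+o(1)}),
\]
with the $o(1)$ in the exponent uniform over $\l\in\mathcal L$. In fact the bounds in \eqref{eq: middle first row} give $|\varepsilon_\l|\le 2(r/n)^2$ plus the lower-order correction, so $|\varepsilon_\l|\le C(\log n)^2/n^2$. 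In particular $x_\l\ge 1-3\log n/n>0$ for $n$ large, so $\log x_\l$ is well defined.

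\textbf{Step 1: expand the logarithm.} We use $\log(1-y)=-y+O(y^2)$ for $|y|\le 1/2$, applied with $y=2r/n-\varepsilon_\l=O(\log n/n)$. This gives
\[
\log x_\l=-\frac{2r}{n}+\varepsilon_\l+O\!\left(\frac{r^2}{n^2}+\varepsilon_\l^2\right)=-\frac{2r}{n}+O\!\left(\frac{(\log n)^2}{n^2}\right).
\]

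\textbf{Step 2: multiply by $t$.} Since $t=\frac n2(\log n-\frac12\log l+c)$ and $1\le l\le n$, we have $0<t\le n\log n$ for large $n$. Hence
\[
t\log x_\l=-\frac{2rt}{n}+O\!\left(\frac{(\log n)^3}{n}\right).
\]
The error is $n^{-1+o(1)}=o(1)$, so $\exp(O(n^{-1+o(1)}))=1+O(n^{-1+o(1)})$. Exponentiating then yields the claim.

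\textbf{Main obstacle.} The only delicate point is uniformity in $\l$: the $O(n^{-2+o(1)})$ in \eqref{eq: long first row} must hold uniformly over $\mathcal L$, and not just for each fixed $\l$. We handle this by reading the error directly off the explicit two-sided bounds \eqref{eq: middle first row}, rather than citing \eqref{eq: long first row} abstractly. Those bounds give
\[
-1+\frac{2\l_1}{n}=1-\frac{2r}{n}
\qquad\text{and}\qquad
1-\frac{2\l_1}{n}+2\Bigl(\frac{\l_1}{n}\Bigr)^2=1-\frac{2r}{n}+\frac{2r^2}{n^2},
\]
so that $0\le\varepsilon_\l\le 2r^2/n^2\le 2(\log n)^2/n^2$, uniformly over $\mathcal L$.
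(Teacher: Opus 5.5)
Your proof is correct and follows the paper's argument. Like the paper, you write the eigenvalue as $\exp(-2r/n+O(n^{-2+o(1)}))$ via \eqref{eq: long first row}, multiply the exponent by $t=O(n\log n)$, and exponentiate. Your extra step of reading $0\le\varepsilon_\lambda\le 2r^2/n^2$ directly off \eqref{eq: middle first row} makes explicit the uniformity over $\mathcal L$ that the paper leaves implicit.
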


\begin{proof}
Following the estimate given in \eqref{eq: long first row}, we have
$$\frac 1n+\frac{n-1}{n}\frac{\chi_\l(\tau)}{d_\l}=\exp(-2r/n+O(n^{-2+o(1)})).$$
Therefore, we have
$$\left(\frac 1n+\frac{n-1}{n}\frac{\chi_\l(\tau)}{d_\l}\right)^t=\exp(-2rt/n+O(tn^{-2+o(1)}))=(1+O(n^{-1+o(1)}))\exp(-2rt/n).$$
\end{proof}

\begin{lemma}\label{lem: dimension bound}
Let $\l\vdash n$, and $r:=n-\l_1$. Then, we have
\begin{enumerate}
\item $d_\lambda\le\binom{n}{r}\cdot d_{\lambda^*}\le\binom{n}{r}\cdot \sqrt{r!}\le n^r/\sqrt{r!}.$
\item When $r\le \log n$, we have 
$$d_\l=\binom{n}{r}\cdot d_{\l^*}\left(1-\frac{r}{n}+O(n^{-2+o(1)})\right).$$
\end{enumerate}
\end{lemma}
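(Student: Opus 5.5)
The plan is to prove both parts from the standard combinatorial description of $d_\lambda$ as the number of standard Young tableaux of shape $\lambda$: part (1) by an injection, part (2) by an exact computation with the hook length formula.

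\emph{Part (1).} Given a standard Young tableau $T$ of shape $\lambda$, record two things:
\begin{itemize}
\item the set $S\subset[n]$ of the $r$ entries lying outside the first row;
\item the tableau of shape $\lambda^*$ obtained by deleting the first row of $T$ and relabelling the entries of $S$ order-preservingly by $1,\dots,r$.
\end{itemize}
This relabelled filling is standard, because the row and column conditions in rows $\ge2$ of $T$ are preserved by any order-preserving relabelling. The map is injective, since the first row of $T$ is then forced to be $[n]\setminus S$ in increasing order. Hence $d_\lambda\le\binom nr d_{\lambda^*}$.

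Since $\lambda^*\vdash r$, the identity $\sum_{\mu\vdash r}d_\mu^2=r!$ gives $d_{\lambda^*}\le\sqrt{r!}$. Finally $\binom nr\le n^r/r!$, so $\binom nr\sqrt{r!}\le n^r/\sqrt{r!}$.

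\emph{Part (2).} Apply the hook length formula to both $\lambda$ and $\lambda^*$. Every cell of $\lambda$ in rows $\ge2$ has the same hook length in $\lambda$ as the corresponding cell of $\lambda^*$, since its hook lies entirely below the first row. All those factors therefore cancel, and we get the exact identity
\[
\frac{d_\lambda}{\binom nr d_{\lambda^*}}=\frac{(n-r)!}{\prod_{j=1}^{\lambda_1}h_\lambda(1,j)}.
\]
Next compute the first-row hooks:
\begin{itemize}
\item for $j>\lambda_2$ we have $h_\lambda(1,j)=\lambda_1-j+1$, and these factors multiply to $(\lambda_1-\lambda_2)!$;
\item for $j\le\lambda_2$ we have $h_\lambda(1,j)=\lambda_1-j+\lambda'_j$.
\end{itemize}
Since $\lambda_1=n-r$, the ratio becomes
\[
\prod_{j=1}^{\lambda_2}\frac{\lambda_1-j+1}{\lambda_1-j+\lambda_j'}=\prod_{j=1}^{\lambda_2}\left(1-\frac{\lambda'_j-1}{\lambda_1-j+\lambda'_j}\right).
\]

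\emph{Asymptotics.} Only columns $j\le\lambda_2$ reach below the first row, so
\[
\sum_{j\le\lambda_2}(\lambda'_j-1)=r.
\]
Each denominator equals $n+O(r)$, because $j\le\lambda_2\le r$ and $\lambda'_j\le r+1$. Taking logarithms, the sum of the first-order terms is $r/n+O(r^2/n^2)$, and the quadratic error is $O\bigl(\sum_j(\lambda_j'-1)^2/n^2\bigr)=O(r^2/n^2)$. With $r\le\log n$ all errors are $O(n^{-2+o(1)})$, so exponentiating gives the factor $1-r/n+O(n^{-2+o(1)})$.

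The only delicate point is the bookkeeping of first-row hooks (splitting at $\lambda_2$ and identifying $\sum_j(\lambda'_j-1)=r$); the rest is routine.
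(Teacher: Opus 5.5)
Your proof is correct and complete. The first-row-deletion injection and the bound $d_{\lambda^*}\le\sqrt{r!}$ give part (1). The hook-length identity $d_\lambda/\bigl(\binom nr d_{\lambda^*}\bigr)=\prod_{j\le\lambda_2}\bigl(1-\frac{\lambda'_j-1}{\lambda_1-j+\lambda'_j}\bigr)$, combined with $\sum_{j\le\lambda_2}(\lambda'_j-1)=r$ and denominators of order $n+O(r)$, gives part (2).

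The paper gives no argument of its own here; it simply defers to Jain--Sawhney's Lemma 3.3. Your write-up supplies a self-contained proof of that cited lemma, by the usual tableau-counting and hook-length route.
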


\begin{proof}
This is the same as \cite[Lemma 3.3]{jain2024hitting}.
\end{proof}

\begin{lemma}\label{lem: estimate of Kostka}
Let $\l\vdash n$, and $r:=n-\l_1,r':=n-\lambda_1'$. Then, we have
\begin{enumerate}
\item If $\l_1'>m$, then $K_{\lambda,(l^m)}=0$.
\item $K_{\lambda,(l^m)}\le m^r$. 
\item $K_{\lambda,(l^m)}\le\binom{n}{r}K_{\lambda^*,(\lfloor r/m\rfloor+1,\ldots,\lfloor r/m\rfloor+1,\lfloor r/m\rfloor,\ldots,\lfloor r/m\rfloor)}$. Here and from now on, for the partition $(\lfloor r/m\rfloor+1,\ldots,\lfloor r/m\rfloor+1,\lfloor r/m\rfloor,\ldots,\lfloor r/m\rfloor)$, there are $r-m\lfloor r/m\rfloor$ parts of size $\lfloor r/m\rfloor+1$, and $m\lfloor r/m\rfloor+m-r$ parts of size $\lfloor r/m\rfloor$.
\item $K_{\lambda,(l^m)}\le\binom{n}{r'}K_{\lambda^\bullet,(\lfloor r'/m\rfloor+1,\ldots,\lfloor r'/m\rfloor+1,\lfloor r'/m\rfloor,\ldots,\lfloor r'/m\rfloor)}$. Here and from now on, for the partition $(\lfloor r'/m\rfloor+1,\ldots,\lfloor r'/m\rfloor+1,\lfloor r'/m\rfloor,\ldots,\lfloor r'/m\rfloor)$, there are $r'-m\lfloor r'/m\rfloor$ parts of size $\lfloor r'/m\rfloor+1$, and $m\lfloor r'/m\rfloor+m-r'$ parts of size $\lfloor r'/m\rfloor$.
\end{enumerate}
\end{lemma}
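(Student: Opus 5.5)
The plan is to read $K_{\lambda,(l^m)}$ as the number of semistandard Young tableaux (SSYT) of shape $\lambda$ and content $(l^m)$: each value $1,\dots,m$ appears exactly $l$ times. All four parts then come from cutting a tableau into its first row (or first column) and the rest, and counting the pieces. The one external fact I will use is the standard monotonicity of Kostka numbers in the content. If $\mu\trianglelefteq\nu$ in dominance order, then $K_{\kappa,\mu}\ge K_{\kappa,\nu}$. Also, $K_{\kappa,\nu}$ depends on a composition $\nu$ only through its sorted version.

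For (1), the entries of the first column of an SSYT are strictly increasing values in $[m]$. So a column of length $\lambda_1'>m$ is impossible and $K_{\lambda,(l^m)}=0$.

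For (2), observe that the first row of an SSYT is weakly increasing. Hence it is determined by the multiset of its entries, and that multiset is the full content minus the entries in rows $\ge2$. So the tableau is determined by the filling of the $r$ cells outside the first row. Each such cell takes one of $m$ values, giving at most $m^r$ tableaux.

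For (3), refine the argument for (2). Deleting the first row leaves an SSYT of shape $\lambda^*$ whose content is some composition $\nu=(\nu_1,\dots,\nu_m)$ of $r$ with $0\le\nu_i\le l$. Conversely, $\nu$ together with this smaller tableau recovers the original tableau. Therefore
\[
K_{\lambda,(l^m)}\le\sum_{\nu}K_{\lambda^*,\nu}.
\]
The balanced partition $\beta_r:=(\lfloor r/m\rfloor+1,\ldots,\lfloor r/m\rfloor,\ldots)$ is dominated by every partition of $r$ with at most $m$ parts. By the monotonicity fact, each term satisfies $K_{\lambda^*,\nu}\le K_{\lambda^*,\beta_r}$.

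The step I expect to need care is bounding the number of admissible compositions $\nu$ by $\binom nr$. The naive count $\binom{r+m-1}{r}$ need not be $\le\binom nr$ when $m$ is large. Instead I will inject compositions into $r$-subsets of $[n]$ using the blocks $T_{i,l}$: send $\nu$ to the union over $i$ of the first $\nu_i$ elements of $T_{i,l}$. This is possible because $\nu_i\le l=|T_{i,l}|$. The resulting set has size $r$, and the map is clearly injective, which gives (3).

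For (4), delete the first column instead. By (1) we may assume $\lambda_1'\le m$. The first column's entries form a subset $S\subseteq[m]$ of size $\lambda_1'$. What remains is an SSYT of shape $\lambda^\bullet$ with content $\nu_i=l-\mathbbm 1_{i\in S}$, which is a composition of $r'=n-\lambda_1'$. The pair $(S,\text{remaining tableau})$ determines the original tableau. By monotonicity again, each term is at most $K_{\lambda^\bullet,\beta_{r'}}$.

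It then remains to check that the number $\binom{m}{\lambda_1'}$ of choices of $S$ is at most $\binom n{r'}$. I will do this by an injection. Map $S$ to $([m]\setminus S)\cup\{m+1,\dots,n\}$, which has size $(m-\lambda_1')+(n-m)=r'$. This map is injective, so (4) follows.
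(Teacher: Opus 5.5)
Your proposal is correct and follows essentially the same route as the paper. Both arguments count semistandard tableaux, delete the first row or first column, and then use the symmetry and dominance-monotonicity of Kostka numbers in the weight. Your injection $\nu\mapsto\bigcup_i\{\text{first }\nu_i\text{ elements of }T_{i,l}\}$ is a combinatorial version of the paper's coefficientwise bound $[x^r](1+x+\cdots+x^l)^m\le[x^r](1+x)^n=\binom{n}{r}$. Your treatment of (4) also spells out what the paper only calls ``analogous''. In (4) the sorted content $(l-\mathbbm{1}_{i\in S})_i$ in fact coincides exactly with the balanced partition for $r'$, so monotonicity is not even needed there.
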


\begin{proof}
Recall that by definition of Kostka numbers, $K_{\lambda,(l^m)}$ is the number of semistandard Young tableaux of shape $\lambda$ and weight $(l^m)$. The first item holds because there does not exist such a semistandard Young tableau. The second item holds because there are at most $m^r$ ways to fill in the second row and below of the tableau, and after that the first row is fully determined.

For the third item, let $\lambda^*:=(\lambda_2,\lambda_3,\ldots)$, so that $|\lambda^*|=r$.
Given a semistandard Young tableau $T$ of shape $\lambda$ and weight $(l^m)$, delete its first row and denote the resulting tableau by $T^*$. Then $T^*$ has shape $\lambda^*$ and some weight
\[
\alpha=(\alpha_1,\ldots,\alpha_m),\qquad 0\le \alpha_i\le l,\qquad \sum_{i=1}^m \alpha_i=r.
\]
Conversely, once $T^*$ is fixed, the first row of $T$ must contain exactly $l-\alpha_i$ copies of $i$ for each $1\le i\le m$. Since the first row is weakly increasing, there is at most one possible way to fill it. Hence
\[
K_{\lambda,(l^m)}
\le \sum_{\alpha} K_{\lambda^*,\alpha},
\]
where the sum runs over all weak compositions $\alpha=(\alpha_1,\ldots,\alpha_m)$ of $r$ satisfying $0\le \alpha_i\le l$.

Now let $\alpha^+$ be the partition obtained by rearranging the parts of $\alpha$ in weakly decreasing order. By the usual symmetry of Kostka numbers with respect to permuting the parts of the weight, we have
\[
K_{\lambda^*,\alpha}=K_{\lambda^*,\alpha^+}.
\]
Let
\[
\alpha_0:=\bigl(\lfloor r/m\rfloor+1,\ldots,\lfloor r/m\rfloor+1,\lfloor r/m\rfloor,\ldots,\lfloor r/m\rfloor\bigr),
\]
where there are $r-m\lfloor r/m\rfloor$ parts equal to $\lfloor r/m\rfloor+1$.
Then every partition $\alpha^+$ of $r$ with at most $m$ parts dominates $\alpha_0$. By the standard fact (see \cite[Chapter 2]{MR1824028}) that if $\mu\trianglerighteq \alpha_0$, then $h_{\alpha_0}-h_\mu$ is Schur-positive, it follows that
\[
K_{\lambda^*,\alpha^+}\le K_{\lambda^*,\alpha_0}.
\]
Therefore each summand above is bounded by $K_{\lambda^*,\alpha_0}$, and so
\[
K_{\lambda,(l^m)}
\le \left|\Bigl\{\alpha\in \mathbb{Z}_{\ge 0}^m:\ \sum_{i=1}^m \alpha_i=r,\ \alpha_i\le l\Bigr\}\right|\cdot K_{\lambda^*,\alpha_0}.
\]

It remains to bound the number of such $\alpha$. This number is exactly $[x^r](1+x+\cdots+x^l)^m$, i.e., the coefficient of $x^r$ in $(1+x+\cdots+x^l)^m$. Since coefficientwise
\[
1+x+\cdots+x^l \le (1+x)^l,
\]
we get
\[
[x^r](1+x+\cdots+x^l)^m \le [x^r](1+x)^{ml}=[x^r](1+x)^n=\binom{n}{r}.
\]
Hence
\[
K_{\lambda,(l^m)}\le \binom{n}{r}\,K_{\lambda^*,\alpha_0},
\]
which proves the third item.

The fourth item can be proved in an analogous way by considering the partition removing the first column.
\end{proof}

\begin{lemma}\label{lem: upper bound of dK}
We have 
$$\sum_{\l_1=n-r}d_\l K_{\l,(l^m)}\le\binom nr^2\cdot m^r,$$
$$\sum_{\l_1'=n-r'}d_\l K_{\l,(l^m)}\le\binom {n}{r'}^2\cdot m^{r'}.$$
\end{lemma}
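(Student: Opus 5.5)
Since every summand is nonnegative, I will bound the two factors separately. For the $d_\lambda$ factor I use \Cref{lem: dimension bound}(1); for the Kostka factor I use \Cref{lem: estimate of Kostka}(2); and I control the summation over shapes by a generating-function identity.

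\textbf{First inequality.} Fix $r$ and let $\lambda$ range over partitions with $\lambda_1=n-r$. Each such $\lambda$ is determined by $\lambda^*\vdash r$. By \Cref{lem: dimension bound}(1), $d_\lambda\le\binom nr d_{\lambda^*}$, and by \Cref{lem: estimate of Kostka}(2), $K_{\lambda,(l^m)}\le m^r$. Hence
\[
\sum_{\lambda_1=n-r}d_\lambda K_{\lambda,(l^m)}\le \binom nr m^r\sum_{\mu\vdash r}d_\mu .
\]
It then suffices to show $\sum_{\mu\vdash r}d_\mu\le\binom nr$. The left side is the number of involutions of $[r]$, i.e.\ the number of standard Young tableaux with $r$ cells. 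I would compare this with $\binom nr$ through a crude count. The number of involutions is at most the number of set partitions of $[r]$ into blocks of size at most $2$, which is at most $r!$ (or $\sqrt{r!\,p(r)}$ by Cauchy--Schwarz). Also $\binom nr\ge (n/r)^r$. This gives what we need whenever $r\le n/2$ or so, since then $\binom nr\ge 2^r$ and the number of involutions is at most $r!$.

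The cleanest route is to observe that involutions of $[r]$ are at most the subsets-with-matching structures, and to avoid the case split entirely by embedding. Map each involution of $[r]$ injectively to an $r$-subset of $[n]$; this works because $n\ge 2r$ in the relevant range. Note also that $\lambda_1\ge\lambda_2$ forces $r\le n-\lambda_1\le n(1-1/\ell(\lambda))$, and more usefully $\lambda^*$ must fit under the first row, so $\lambda_2\le n-r$. If $r>n/2$ the bound fails only marginally. In that case I would instead bound the sum directly by $\sum_\lambda d_\lambda K_{\lambda,(l^m)}=\#\{\text{words}\}=\binom{n}{l,\dots,l}\le m^n$, which is comparable to $m^r\binom nr^2$ once $r\ge n/2$. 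Here I would check that $\binom{n}{r}^2m^r\ge m^{n}$ fails in general, so I would rather use $\sum_\lambda d_\lambda K_{\lambda,(l^m)}=\binom{n}{l,\ldots,l}\le\binom nr m^{n-r}\cdot(\ldots)$.

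Since the paper uses this lemma only for small $r$, I expect the intended argument to be the two-line bound above. The main obstacle is the identity $\sum_{\mu\vdash r}d_\mu\le\binom nr$ for all $r$, not just small $r$. I would first try to prove it in general via $\sum_{\mu\vdash r}d_\mu\le\sqrt{p(r)\,r!}$ together with the constraint that $\lambda^*$ fits below a first row of length $n-r$.

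\textbf{Second inequality.} This follows by conjugation. We have $d_\lambda=d_{\lambda'}$, and \Cref{lem: estimate of Kostka}(4) handles the column version. Alternatively, \Cref{lem: estimate of Kostka}(1) gives $\lambda_1'\le m$. Then $K_{\lambda,(l^m)}$ is bounded by the fillings of the cells outside the first column, of which there are $r'$, each taking at most $m$ values, and the first column is forced to be $1,\dots,\lambda_1'$ up to the remaining multiset. So $K\le m^{r'}$, and the same computation as before applies with $\lambda^\bullet$ in place of $\lambda^*$.
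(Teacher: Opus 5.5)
Your argument reduces everything to $\sum_{\mu\vdash r}d_\mu\le\binom nr$, and this inequality is false in exactly the range where the lemma matters. The left side is the number $I(r)$ of involutions of $[r]$, which grows like $(r/e)^{r/2}e^{O(\sqrt r)}$, while $\binom nr\le 2^n$. So for $r$ a fixed fraction of $n$ the ratio $I(r)/\binom nr$ is $n^{\Theta(n)}$. Concretely, take $n=60$ and $r=30$, where the fitting constraint $\mu_1\le n-r$ is vacuous. There $I(30)=606917269909048576>\binom{60}{30}=118264581564861424$.

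None of your proposed justifications can close this. Since $r!$ is far larger than $2^r$, the bounds $I(r)\le r!$ and $\binom nr\ge 2^r$ do not combine. There is also no injection from involutions of $[r]$ into $r$-subsets of $[n]$ when $n\ge 2r$; the example above has $n=2r$. The case $r>n/2$ you leave open yourself.

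Your premise that only small $r$ matters is also mistaken. The lemma is invoked in case (1) of \Cref{thm: removal for P}, where $r\ge 0.001n$ (or $r'\ge 0.001n$), and in \Cref{thm: removal for nu} for $r\in[0.9n,n]$. Where the decoupled estimate $d_\lambda\le n^r/\sqrt{r!}$, $K_{\lambda,(l^m)}\le m^r$ suffices (e.g.\ case (3) of \Cref{thm: removal for P}), the paper uses it directly. This lemma exists precisely because that decoupling loses a superexponential factor for large $r$. Your second inequality inherits the same defect.

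The missing idea is to keep $K_{\lambda,(l^m)}$ coupled with $d_\lambda$ rather than bounding it pointwise by $m^r$. The paper uses item (3) of \Cref{lem: estimate of Kostka}, namely $K_{\lambda,(l^m)}\le\binom nr K_{\lambda^*,\alpha_0}$, where $\alpha_0$ is the balanced composition of $r$ into $m$ parts. Together with $d_\lambda\le\binom nr d_{\lambda^*}$, the sum is at most $\binom nr^2\sum_{\mu\vdash r}d_\mu K_{\mu,\alpha_0}$. By RSK, the last sum is the number of words of length $r$ with content $\alpha_0$, i.e.\ $r!/\prod_i\alpha_{0,i}!\le m^r$. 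So the factor $m^r$ comes from counting words of length $r$, and $I(r)$ never appears.

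Alternatively, you can apply RSK for words directly. The left side counts words of content $(l^m)$ whose longest weakly increasing subsequence has length $n-r$. Such a word is determined by the $r$ positions off one such subsequence and the letters placed there, which even gives $\binom nr m^r$. The column version is identical with strictly decreasing subsequences.
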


\begin{proof}
We only prove the first inequality using the third item of \Cref{lem: estimate of Kostka}, since the second inequality follows analogously from the fourth item of \Cref{lem: estimate of Kostka}. Following the third item of \Cref{lem: estimate of Kostka}, we have
\begin{align}
\begin{split}
\sum_{\l_1=n-r}d_\l K_{\l,(l^m)}&\le\sum_{\lambda^*\vdash r}\binom nr^2d_{\lambda*}K_{\lambda^*,(\lfloor r/m\rfloor+1,\ldots,\lfloor r/m\rfloor+1,\lfloor r/m\rfloor,\ldots,\lfloor r/m\rfloor)}\\
&=\binom nr^2\cdot\frac{r!}{((\lfloor r/m\rfloor+1)!)^{r-m\lfloor r/m\rfloor}\cdot(\lfloor r/m\rfloor!)^{m\lfloor r/m\rfloor+m-r}}.\\
\end{split}
\end{align}
Therefore, we only need to prove
$$m^r\cdot((\lfloor r/m\rfloor+1)!)^{r-m\lfloor r/m\rfloor}\cdot(\lfloor r/m\rfloor!)^{m\lfloor r/m\rfloor+m-r}\ge r!.$$
In fact, we have
$$m^{\lfloor r/m\rfloor+1}(\lfloor r/m\rfloor+1)!\ge (r-j)(r-j-m)\cdots(r-j-\lfloor r/m\rfloor m)$$
for all $0\le j\le r-1-m\lfloor r/m\rfloor$, and
$$m^{\lfloor r/m\rfloor}(\lfloor r/m\rfloor)!\ge (m\lfloor r/m\rfloor-j)(m\lfloor r/m\rfloor-j-m)\cdots(\lfloor r/m\rfloor-j)$$
for all $0\le j\le m\lfloor r/m\rfloor+m-r-1$. Multiplying all these inequalities give the proof.
\end{proof}

The following bound generalizes \cite[Lemma 3.4]{jain2024hitting}. It shows that the sum $\sum_{\lambda\vdash n}d_\l K_{\l,(l^m)}\left|\frac 1n+\frac{n-1}{n}\frac{\chi_\lambda(\tau)}{d_\lambda}\right|^{2t}$ is mainly contributed by partitions in $\mathcal{L}$. 

\begin{thm}\label{thm: removal for P}
Let $t=\frac n2(\log n-\frac 12\log l+c)$. Then, we have
$$\sum_{\substack{\lambda\vdash n\\\lambda\notin \mc{L}}}d_\l K_{\lambda,(l^m)}\left|\frac 1n+\frac{n-1}{n}\frac{\chi_\lambda(\tau)}{d_\lambda}\right|^{2t}=n^{-\omega(1)}.$$
\end{thm}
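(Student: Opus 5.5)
The strategy mirrors the proof of \cite[Lemma 3.4]{jain2024hitting}, with $d_\lambda$ replaced by $d_\lambda K_{\lambda,(l^m)}$ and the Kostka factor controlled by \Cref{lem: upper bound of dK}. Write $f(\lambda):=\frac 1n+\frac{n-1}{n}\frac{\chi_\lambda(\tau)}{d_\lambda}$. Since $\chi_{\lambda}(\tau)=-\chi_{\lambda'}(\tau)$, we have $f(\lambda')=\frac2n-f(\lambda)$, so $|f(\lambda)|\le\max(f(\lambda),f(\lambda')+O(1/n))$ up to harmless errors. It therefore suffices to treat partitions according to $r=n-\lambda_1$ and $r'=n-\lambda_1'$, using the row sums and the column sums of \Cref{lem: upper bound of dK} respectively. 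Note $2t=n(\log n-\frac12\log l+c)$, and $n/\sqrt l=m\sqrt l$. The key budget is $\binom nr^2 m^r\le (n^2m)^r/(r!)^2$, to be compared with decay of the form $e^{-2r\cdot 2t/n}\approx (n^2/l)^{-r}e^{-2cr}$. The ratio is roughly $(ml e^{-2c}/r^2)^r\cdot$ wait: $n^2m\cdot l/n^2 = ml=n$, so the ratio is $(n e^{-2c})^r/(r!)^2$.

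\textbf{Step 1: the middle range.} Take $\log n< r\le n/2$ (or more generally $\lambda_1\ge n/2$), with $\lambda_1\ge\lambda_1'$. By \eqref{eq: middle first row}, $|f(\lambda)|\le 1-\frac{2r}{n}+\frac{2r^2}{n^2}\le \exp(-\frac{2r}{n}(1-\frac rn))$. Hence
\[
d_\lambda K|f|^{2t}\le\frac{(n^2m)^r}{(r!)^2}\exp\Bigl(-2r(1-\tfrac rn)(\log n-\tfrac12\log l+c)\Bigr).
\]
For $r\le n/\log n$ the exponent is $-2r\log n+r\log l+O(r)$, so each term is at most $(Cn/r^2)^r$. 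I must handle $r\in[\log n,\sqrt n]$ carefully, since $n/r^2$ can be large there. This is the main obstacle. The bound $m^r$ on the Kostka number is too crude because it doesn't use $K\le$ something like $\binom{r+m}{m}$-type bounds when $r$ is small. So I would instead replace $m^r$ by the sharper count coming from the proof of item 3, namely $\binom nr\cdot r!/\prod(\alpha_0)_i!$. Combining this with $d_\lambda\le \binom nr\sqrt{r!}$ from \Cref{lem: dimension bound} (rather than summing $d_{\lambda^*}^2$), each term becomes about $n^{2r}/\sqrt{r!}$, and there are $p(r)=e^{O(\sqrt r)}$ partitions. The decay $n^{-2r}l^{r}e^{-2cr}$ then leaves $l^r e^{O(r)}/\sqrt{r!}$. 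This is still problematic when $l$ is large, so I must use $d_{\lambda}K_{\lambda,(l^m)}\le\binom nr^2\sqrt{r!}\,\frac{r!}{\prod \alpha_{0,i}!}$ and note that $r!/\prod\alpha_{0,i}!\le m^r$, giving $(nm... )$. Honestly, I expect the decisive estimate to be $\sum_{\lambda_1=n-r}d_\lambda K\le \binom nr^2 \min(m^r, r!)$. Summing with decay gives terms at most $\bigl(\frac{e^{-2c}\,l\min(m,r)}{r^2}\bigr)^r e^{O(r)}$ times corrections. When $m\le r$ this is $(n e^{O(1)}/r^2)^r$, which is fine once $r\ge\sqrt n\cdot$polylog; below that I would retain the second-order term $+2r^2/n$ in the exponent, which gives the extra factor $e^{2r^2\log n/n}$, negligible for $r\le\sqrt n$. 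I expect the real gain there to come from the precise $l$-dependence of $K$, so that case will need further checking. Summing over $r>\log n$, every term is at most $(1/\log n)^{\Omega(r)}$, hence the total is $n^{-\omega(1)}$.

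\textbf{Step 2: large $r$.} When $\lambda_1,\lambda_1'\le n/2$, \eqref{eq: short first row} gives $|f|\le \max(\lambda_1,\lambda_1')/n$. Use the crude bound $\sum_\lambda d_\lambda K_{\lambda,(l^m)}\le \sqrt{n!}\cdot|\sim_l\backslash\S_n|\le n^{n}$, against $(\max(\lambda_1,\lambda_1')/n)^{2t}$. If $\max(\lambda_1,\lambda_1')\le n/10$, this factor is $e^{-\Omega(n\log n)}$, which wins. For the intermediate range $n/10\le\lambda_1\le n/2$, I would use the $\binom nr^2 m^r$ bound with $|f|\le \lambda_1/n\le 1/2$, giving $4^n m^n 2^{-2t}$. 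Since $2t\ge n\log n/2\cdot(1+o(1))$, this is $2^{-\Omega(n\log n)}$, hence negligible. Finally, the case where the first column is long is symmetric, via the second inequality of \Cref{lem: upper bound of dK} together with item 1 of \Cref{lem: estimate of Kostka} (which forces $\lambda_1'\le m$).
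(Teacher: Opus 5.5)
Your proposal has a genuine gap, and it sits exactly where you flag \emph{the main obstacle}: the range $\log n<r\le \epsilon n$ in Step 1 is never closed, so your final claim that every term is at most $(1/\log n)^{\Omega(r)}$ is unsupported. Every bound you try for $d_\lambda K_{\lambda,(l^m)}$ carries a factor $\binom nr^2$. This comes either from the summed estimate of \Cref{lem: upper bound of dK}, or from the $\binom nr$ in item 3 of \Cref{lem: estimate of Kostka}, which is only a crude count of compositions of $r$. Against the decay $|f|^{2t}\approx n^{-2r}l^re^{-2cr}$, such a factor leaves $n^re^{-2cr}/(r!)^2$ or worse. That is enormous for $r\ll\sqrt n$, and it is affordable only when $r\ge 0.001n$, where $\binom nr^2=e^{O(n)}$.

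The missing idea is to bound each term separately, using two crude estimates you already have. For each $\lambda$ with $n-\lambda_1=r$, \Cref{lem: dimension bound}(1) gives $d_\lambda\le n^r/\sqrt{r!}$, and \Cref{lem: estimate of Kostka}(2) gives $K_{\lambda,(l^m)}\le m^r$. There are at most $p(r)\le e^{O(\sqrt r)}$ such $\lambda$. By \eqref{eq: middle first row}, $|f(\lambda)|\le\exp(-2r/n+2r^2/n^2)$, and $(nm)^r\,l^r=n^{2r}$ exactly. Hence
\[
\sum_{\lambda_1=n-r}d_\lambda K_{\lambda,(l^m)}|f(\lambda)|^{2t}\le p(r)\,\frac{(nm)^r}{\sqrt{r!}}\cdot\frac{l^r e^{-2cr}}{n^{2r}}\,e^{\frac{2r^2}{n}(\log n+|c|)}\le\exp\Bigl(-\tfrac12 r\log r+\tfrac{2r^2\log n}{n}+O(r)\Bigr).
\]
For $\log n\le r\le 0.001n$ this is at most $\exp(-\frac13 r\log r)$, and these terms sum to $n^{-\omega(1)}$. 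This is the paper's case 3. No finer $l$-dependence of the Kostka number is needed: the shift $-\frac n4\log l$ in $t$ supplies the factor $l^r$, and it cancels the crude $m^r$ exactly.

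Your large-$r$ estimates also need repair.
\begin{itemize}
\item In Step 2, $4^nm^n2^{-2t}$ is not $2^{-\Omega(n\log n)}$ in general. Since $2t=n(\log m+\frac12\log l+c)$,
\[
\log(4^n m^n2^{-2t})=(1-\log 2)\,n\log m-\tfrac{\log 2}{2}\,n\log l+O(n),
\]
which tends to $+\infty$ when $m$ is much larger than $l$. This happens, for example, when $l$ is bounded (allowed in this section) or grows slowly.
\item At $r=n/2$, your Step 1 display only yields $e^{-\frac n4\log l+O(n)}$. The relaxation $1-2x+2x^2\le e^{-2x(1-x)}$ loses too much against $\binom nr^2\approx 4^n$.
\end{itemize}
Both problems are fixed by keeping $|f|\le\lambda_1/n=1-r/n$ when $\lambda_1\ge\lambda_1'$. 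The paper does this for all $r\ge 0.001n$, using $1-r/n\le e^{-(1+10^{-100})r/n}$. Then $e^{-2rt/n}$ cancels $m^r$ up to a factor $l^{-r/2}e^{-cr}$. The leftover $e^{-10^{-100}\cdot 2rt/n}=e^{-\Omega(n\log n)}$ beats $\binom nr^2=e^{O(n)}$. The remaining case $\lambda_1'>0.999n$ contributes nothing, since $K_{\lambda,(l^m)}=0$ there.
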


\begin{proof}
We divide the sum into several cases and bound them separately. 
\begin{enumerate}
\item Suppose $\lambda_1\le 0.999n$ and $\lambda_1'\le 0.999n$. We first deal with the case $\lambda_1\ge\lambda_1'$. Let $r:=n-\l_1\ge0.001 n$. Following \eqref{eq: short first row}, we have 
$$\left|\frac 1n+\frac{n-1}{n}\frac{\chi_\lambda(\tau)}{d_\lambda}\right|\le1-
\frac rn\le\exp(-(1+10^{-100})r/n).$$
Also, applying \Cref{lem: upper bound of dK}, we have
$$\sum_{\l_1=n-r\ge\l_1'}d_\l K_{\l,(l^m)}\le\binom nr^2\cdot m^r=\exp(O(n)+r\log m).$$
Therefore, we have
\begin{align}
\begin{split}
\sum_{\l_1=n-r\ge\l_1'}d_\l K_{\lambda,(l^m)}\left|\frac 1n+\frac{n-1}{n}\frac{\chi_\lambda(\tau)}{d_\lambda}\right|^{2t}&\le\exp\left(-2(1+10^{-100})rt/n+O(n)+r\log m\right)\\
&\le\exp(-2\cdot10^{-100}rt/n+O(n))\\
&\le\exp(-2^{-1}\cdot10^{-100}r\log n+O(n))\\
&=n^{-\omega(1)}.
\end{split}
\end{align}
Here, on the third line, we are using $t/n=\frac12(\log n-\frac 12\log l+c)\ge \frac 14\log n+O(1)$. Summing over all $r\ge 0.001 n$ gives $n^{-\omega(1)}$. The case $\lambda_1'\ge\lambda_1$ follows similarly from the second inequality in .
\item Suppose $\lambda_1'>0.999n$. Since $m=n/l\le 0.5n$, we have $K_{\l,(l^m)}=0$ from the first item of \Cref{lem: estimate of Kostka}. Therefore, the total sum is zero.
\item Suppose $0.999n<\lambda_1<n-\log n$. Let $r:=n-\l_1\in[\log n,0.001n]$. Following \eqref{eq: middle first row}, we have 
$$\left|\frac 1n+\frac{n-1}{n}\frac{\chi_\lambda(\tau)}{d_\lambda}\right|\le1-2\frac rn+2\frac{r^2}{n^2}\le\exp\left(-2\frac rn+2\frac{r^2}{n^2}\right).$$
Let $p(r)$ be the number of partitions of size $r$. The Hardy-Ramanujan asymptotic gives $p(r)\lesssim\exp(\pi\sqrt{2r/3})$. Therefore, it follows from the first item of \Cref{lem: dimension bound} and the second item of \Cref{lem: estimate of Kostka} that
\begin{align}
\begin{split}
\sum_{\l_1=n-r}d_\l K_{\lambda,(l^m)}\left|\frac 1n+\frac{n-1}{n}\frac{\chi_\lambda(\tau)}{d_\lambda}\right|^{2t}&\le p(r)\frac{n^rm^r}{\sqrt{r!}}\exp\left(n\left(\log n-\frac 12\log l+c\right)\left(-2\frac rn+2\frac{r^2}{n^2}\right)\right)\\
&\le\exp(\pi\sqrt{2r/3})\cdot\exp\left(-\frac 12r\log r+2\frac {r^2}{n}\log n+O(r)\right)\\
&\le\exp\left(-\frac 13 r\log r\right)\\
&=n^{-\omega(1)}.
\end{split}
\end{align}
\end{enumerate}
Here, the third lines holds because $\frac{\log n}{n}\le 0.01\frac{\log r}{r}$. Combining the above cases together gives the proof.
\end{proof}

\subsection{Character estimates for $\nu_n^t$}

Following \cite[Section 3,3]{jain2024hitting}, for all $0\le M\le n/3$, we define a probability measure $\xi_M$ on $\mathfrak{S}_n$ as follows:
choose a subset $S\subset[n]$ uniformly among all subsets of size $M$, then sample a permutation
$\pi$ uniformly from $\S_{[n]\setminus S}$, and extend it to an element of $\mathfrak{S}_n$ 
by fixing every point of $S$. The following lemma gives an expression for the Fourier transform of $\xi_M$.

\begin{thm}[Fourier transform of $\xi_M$ and $\xi_M^c$]
\label{thm :zetaM_fourier_An}
Suppose $0\le M\le n/3$. Then, we have
\[
\widehat{\xi_M}(\lambda)
=
\frac{K_{\lambda,\mu}}{d_\lambda}\,\Id_{d_\lambda},
\]
where
\[
\mu=\mu_M:=(n-M,1,1,\dots,1)\vdash n,
\]
and $K_{\alpha,\mu}$ denotes the Kostka number.

Moreover, if $\l_1<n-M$, then $\widehat{\xi_M}(\lambda)=0$. If $\l_1\ge n-M$, we have
$$\widehat{\xi_M}(\lambda)=\binom{M}{\,n-\lambda_1\,}\frac{
d_{\lambda^*}}{d_\lambda}\Id_{d_\lambda}.$$
\end{thm}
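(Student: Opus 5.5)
Since $\xi_M$ is invariant under conjugation (a uniform $M$-subset and a uniform permutation of its complement), it is a class function. By Schur's lemma, $\widehat{\xi_M}(\lambda)=\frac{1}{d_\lambda}\bigl(\sum_g \xi_M(g)\chi_\lambda(g)\bigr)\Id_{d_\lambda}$, so it suffices to identify the scalar $\mathbf E_{\xi_M}[\chi_\lambda]$ with $K_{\lambda,\mu_M}$.

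To do this, condition on $S$; by class invariance we may take $S=\{1,\dots,M\}$. The permutation is then uniform on the Young subgroup $Y:=\S_1^{M}\times\S_{n-M}$, embedded as the stabilizer of each point of $S$. Hence
\[
\mathbf E_{\xi_M}[\chi_\lambda]=\frac{1}{|Y|}\sum_{y\in Y}\chi_\lambda(y)=\dim V_\lambda^{Y}=\left\langle \Ind_Y^{\S_n}\bbone,\chi_\lambda\right\rangle=K_{\lambda,\mu_M},
\]
which is the same Frobenius reciprocity computation the paper used for $(\S_l)^m$. Since Kostka numbers are invariant under permuting the weight, the order of the parts of $\mu_M$ is irrelevant. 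This proves the first formula; note that the assumption $M\le n/3$ guarantees that $\mu_M=(n-M,1^M)$ is a genuine partition with $n-M\ge 1$.

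The second formula is a combinatorial count of SSYT of shape $\lambda$ and content $(n-M,1^M)$, with the $n-M$ ones and the singleton labels $2,\dots,M+1$. Column strictness forces all the $1$'s into the first row, occupying its leftmost $n-M$ boxes. Thus $K_{\lambda,\mu_M}=0$ when $\lambda_1<n-M$, which gives the vanishing statement.

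When $\lambda_1\ge n-M$, set $r=n-\lambda_1$. The remaining cells form the skew shape $\lambda/(n-M)$, consisting of $\lambda_1-(n-M)=M-r$ boxes at the end of row one together with the shape $\lambda^*$ below it. These cells are filled with $M$ distinct labels so that rows and columns increase, i.e.\ they form a standard filling of this skew shape. Since $\lambda_2\le r\le M\le$ well below $n-M$ (here $M\le n/3$ gives $n-M\ge 2M\ge \lambda_2$), no cell of $\lambda^*$ lies beneath the trailing first-row segment. The skew shape is therefore a disjoint union of a single row of length $M-r$ and the shape $\lambda^*$, with no column constraints linking them.

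The number of standard fillings is then obtained by choosing which $r$ labels go to $\lambda^*$ and filling it standardly, while the first-row segment is forced to be increasing. This gives $\binom{M}{r}d_{\lambda^*}$. The main point to check carefully is this disconnectedness, which is exactly where $M\le n/3$ is used; it can fail when $M$ is large relative to $n-M$. Dividing by $d_\lambda$ gives the stated formula.
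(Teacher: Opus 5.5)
Your proposal is correct and follows essentially the same route as the argument the paper defers to (Jain--Sawhney, Lemma 3.5). You use Schur's lemma to reduce to the scalar $\mathbf E_{\xi_M}[\chi_\lambda]=\dim V_\lambda^{\S_{n-M}}=K_{\lambda,(n-M,1^M)}$ via Frobenius reciprocity and Young's rule, and then count semistandard tableaux: the $1$'s are forced into the first row, and the remaining skew shape splits into a row of length $M-r$ and $\lambda^*$. Your disconnectedness check is the one delicate point; it is handled correctly, and in fact it only needs $M\le n/2$.
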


\begin{proof}
The proof follows the same as \cite[Lemma 3.5]{jain2024hitting}.
\end{proof}

\begin{thm}\label{thm: character estimate for nu}
Let $\lambda\vdash n$, and $r:=n-\l_1$. Suppose $t=\frac n2(\log n-\frac 12\log l+c)$. Let $f_\nu(\l)$ be the scalar of $\widehat{\nu_n^t}$ at $\lambda$, i.e., $\widehat{\nu_n^t}(\l)=f_\nu(\l)\Id_{d_\l}$. Then, we have 
$$\left|f_\nu(\l)\right|\le\left(1-\frac 1n\right)^{-1}\cdots\left(1-\frac{r-1}{n}\right)^{-1}\exp\left(-2rt/n\right).$$
Suppose furthermore that $\lambda\in\mathcal{L}$, so that $r\le \log n$. Then, we have
$$f_\nu(\l)=\left(1-\frac rn+O(n^{-2+o(1)})\right)\exp\left(-2rt/n\right).$$
\end{thm}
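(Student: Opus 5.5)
The plan is to write $\nu_n^t$ as a Poisson mixture of the measures $\xi_M$, use \Cref{thm :zetaM_fourier_An} for each $\xi_M$, and sum the resulting series in closed form. By \Cref{defi: construction of nu}, $\nu_n^t=\sum_{M=0}^n\mathbf P[M_t=M]\,\xi_M$. Each $\xi_M$ is a class function, so
\[
f_\nu(\lambda)=\frac{1}{\mathbf P(\Pois(\gamma_t)\le n)}\sum_{M=0}^{n}\mathbf P(\Pois(\gamma_t)=M)\,\frac{K_{\lambda,(n-M,1^M)}}{d_\lambda}.
\]

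\textbf{Step 1: extending the Kostka formula.} The Kostka formula $K_{\lambda,(n-M,1^M)}=\binom{M}{r}d_{\lambda^*}$ (which vanishes when $M<r$) is only stated for $M\le n/3$. I would reprove it directly for all $M$ with $n-M\ge\lambda_2$. The first row is forced to hold $n-M$ ones followed by $M-r$ of the singleton labels. The remaining $r$ singleton labels, chosen in $\binom{M}{r}$ ways, must fill $\lambda^*$ as a standard tableau. The column condition against the first row is automatic when $n-M\ge\lambda_2$.

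\textbf{Step 2: the large-$M$ tail.} For the values $M>n-\lambda_2$, I only need $K_{\lambda,(n-M,1^M)}\le \binom{M}{r}d_{\lambda^*}$ or a crude bound $\le d_\lambda$. These $M$ are at least of order $n-\log n$ when $\lambda\in\mathcal L$, and the Poisson mass there is $n^{-\omega(1)}$ because $\gamma_t=e^{-c}\sqrt l\le\sqrt n$. The same estimate shows $\mathbf P(\Pois(\gamma_t)\le n)=1-n^{-\omega(1)}$.

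\textbf{Step 3: summing the series.} Using the identity $\sum_{M\ge0}\mathbf P(\Pois(\gamma)=M)\binom{M}{r}=\gamma^r/r!$, I obtain
\[
f_\nu(\lambda)=\bigl(1+n^{-\omega(1)}\bigr)\frac{\gamma_t^r}{r!}\cdot\frac{d_{\lambda^*}}{d_\lambda}.
\]
Next I write $\gamma_t^r=\exp(-2rt'/n)=n^r\exp(-2rt/n)\exp(O(r/n))$. The factor $\exp(O(r/n))$ comes from the floor in $t'$ and must be tracked; it is $1+O(n^{-1+o(1)})$ for $\lambda\in\mathcal L$. Thus everything reduces to estimating the ratio $n^r d_{\lambda^*}/(r!\,d_\lambda)$.

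\textbf{Step 4: the ratio $d_{\lambda^*}/d_\lambda$.} For $\lambda\in\mathcal L$, part (2) of \Cref{lem: dimension bound} gives $d_\lambda=\binom nr d_{\lambda^*}(1-\frac rn+O(n^{-2+o(1)}))$. Hence
\[
\frac{n^r d_{\lambda^*}}{r!\,d_\lambda}=\frac{n^r}{(n)_r}\Bigl(1-\frac rn+O(n^{-2+o(1)})\Bigr)^{-1},
\qquad (n)_r:=n(n-1)\cdots(n-r+1).
\]
Expanding $n^r/(n)_r=\prod_{k<r}(1-k/n)^{-1}$ for $r\le\log n$ gives the second claimed asymptotic. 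The precise power of $r/n$ in the leading correction needs careful bookkeeping, and I would double-check it against the normalization of $t'$.

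\textbf{Step 5: the general upper bound.} For arbitrary $\lambda$ I need a lower bound of the form $d_\lambda\ge\binom{n}{r}d_{\lambda^*}\cdot(\text{correction})$, and I would get it from the hook length formula. It gives
\[
\frac{d_\lambda}{d_{\lambda^*}}=\frac{n!}{r!\prod_{j\le\lambda_1}h_{1j}},\qquad h_{1j}=\lambda_1-j+\lambda'_j.
\]
I would then bound $\prod_j h_{1j}$ by comparing it with $\lambda_1!$, using $\lambda'_j-1\le r$, so that the ratio $n^r d_{\lambda^*}/(r!\,d_\lambda)$ is at most $\prod_{k<r}(1-k/n)^{-1}$. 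The truncation factor $\mathbf P(\Pois\le n)^{-1}$ also has to be absorbed, either by the slack in this hook bound or by noting it is $1+n^{-\omega(1)}$.

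I expect this uniform lower bound on $d_\lambda/d_{\lambda^*}$, valid for all $r$ rather than just $r\le\log n$, to be the main obstacle. It is where the exact constant in the first inequality is decided, so I would try the hook-product comparison first and fall back on an SSYT-counting (injection) argument if the constants do not close.
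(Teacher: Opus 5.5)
Your Steps 1--3 follow the same route as the paper: the truncated-Poisson mixture of the $\xi_M$, the formula $K_{\lambda,(n-M,1^M)}=\binom Mr d_{\lambda^*}$, and the identity $\sum_M\mathbf P(\Pois(\gamma_t)=M)\binom Mr=\gamma_t^r/r!$. They are sound, and in fact more careful than the paper, which applies the $\xi_M$ formula for all $M\le n$ and ignores the floor in $\gamma_t$. The gap is in Steps 4--5, and it cannot be closed, because both displays are false with the stated constants.

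\emph{Step 5.} Your target is equivalent to $d_\lambda\ge\binom nr d_{\lambda^*}$, which is the reverse of \Cref{lem: dimension bound}(1). Your own hook computation gives exactly
\[
\frac{d_\lambda}{\binom nr d_{\lambda^*}}=\frac{\lambda_1!}{\prod_{j\le\lambda_1}h_{1j}}=\prod_{j\le\lambda_2}\frac{\lambda_1-j+1}{\lambda_1-j+\lambda'_j}<1\qquad(\lambda\ne(n)),
\]
so no hook-product or injection argument can deliver it.

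\emph{Step 4.} Write $\theta$ for the fractional part of $\frac{n\log n}{2}$, so that $\gamma_t^r=n^re^{-2rt/n}e^{-2r\theta/n}$. The expansion then yields, on $\mathcal L$,
\[
f_\nu(\lambda)=\bigl(1+\tfrac{r(r+1)}{2n}-\tfrac{2r\theta}{n}+O(n^{-2+o(1)})\bigr)e^{-2rt/n},
\]
not the factor $1-\frac rn$.

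\emph{A concrete counterexample.} For $\lambda=(n-1,1)$ one has $f_\nu(\lambda)=\mathbf E[M_t\wedge(n-1)]/(n-1)$, hence $f_\nu(\lambda)e^{2t/n}=1+\frac{1-2\theta}{n}+O(n^{-2})$. This exceeds $1$ whenever $\theta<\frac12$, violating the first display with $r=1$. It also differs from $1-\frac1n$ by $\frac{2(1-\theta)}{n}+O(n^{-2})$, violating the second.

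The paper's proof reaches the statement only through two incorrect steps. It writes $\frac{d_{\lambda^*}}{d_\lambda}\le\binom nr^{-1}$, which is the wrong direction. It also writes $\frac{d_{\lambda^*}}{d_\lambda}=(1-\frac rn+\cdots)\,r!\,n^{-r}$, whereas the correct form is $(1-\frac rn+\cdots)^{-1}\binom nr^{-1}$.

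What your argument does prove is exactly what is used downstream, so weaken the targets rather than fight Step 5.
\begin{itemize}
\item On $\mathcal L$, Steps 1--4 give $f_\nu(\lambda)=(1+O(n^{-1+o(1)}))e^{-2rt/n}$. Together with \Cref{prop: f_P long first row}, this still yields $|f_P(\lambda)-f_\nu(\lambda)|=n^{-1+o(1)}e^{-2rt/n}$, which is all the proof of \Cref{thm: restate of pnkt and nu} needs.
\item For general $\lambda$, use three facts: $K_{\lambda,(n-M,1^M)}\le\binom Mr d_{\lambda^*}$ for all $M$; $\frac{\lambda_1-j+1}{\lambda_1-j+\lambda'_j}\ge\frac1{\lambda'_j}\ge2^{-(\lambda'_j-1)}$; and $\sum_{j\le\lambda_2}(\lambda'_j-1)=r$. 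These give $d_\lambda\ge2^{-r}\binom nr d_{\lambda^*}$, and therefore
\[
0\le f_\nu(\lambda)\le 2^r\Bigl(1-\frac1n\Bigr)^{-1}\cdots\Bigl(1-\frac{r-1}n\Bigr)^{-1}e^{-2rt/n}.
\]
The resulting extra factor $4^r$ is harmless in both ranges of \Cref{thm: removal for nu}.
\end{itemize}
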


\begin{proof}
Notice that
$$\nu_n^t=
\sum_{k=0}^n\xi_k\cdot\frac{\mathbf{P}[\Pois(\gamma_t)=k]}{\mathbf{P}[\Pois(\gamma_t)\le n]},$$
where $\xi_k$ is the distribution in the statement of \Cref{thm :zetaM_fourier_An}. Following the Fourier transform given in \Cref{thm :zetaM_fourier_An}, we have
\begin{align}
\begin{split}
f_\nu(\l)&=\mathbf{P}[\Pois(\gamma_t)\le n]^{-1}\cdot\sum_{k=0}^n\frac{d_{\lambda^*}\binom{k}{r}}{d_\lambda}\mathbf{P}[\Pois(\gamma_t)=k]\\
&\le\mathbf{P}[\Pois(\gamma_t)\le n]^{-1}\sum_{k=0}^n\binom{k}{r}\binom{n}{r}^{-1}\frac{\gamma_t^k e^{-\gamma_t}}{k !}\\
&=\left(1-\frac 1n\right)^{-1}\cdots\left(1-\frac{r-1}{n}\right)^{-1}\mathbf{P}[\Pois(\gamma_t)\le n]^{-1}\sum_{k=r}^nn^{-r}\cdot\frac{\gamma_t^k e^{-\gamma_t}}{(k-r)!}\\
&=\left(1-\frac 1n\right)^{-1}\cdots\left(1-\frac{r-1}{n}\right)^{-1}n^{-r}\cdot\gamma_t^r\cdot\frac{\mathbf{P}[\Pois
(\gamma_t)\le n-r]}{\mathbf{P}[\Pois(\gamma_t)\le n]}\\
&\le\left(1-\frac 1n\right)^{-1}\cdots\left(1-\frac{r-1}{n}\right)^{-1}\exp\left(-2rt/n\right).
\end{split}
\end{align}
Moreover, when $r\le\log n$, we have $\mathbf{P}[\Pois(\gamma_t)>n-r]\le\mathbf{P}[\Pois(n^{2/3})>n-r]=n^{-\omega(1)}$. Thus, a  refinement of the above gives
\begin{align}
\begin{split}
f_\nu(\l)&=\mathbf{P}[\Pois(\gamma_t)\le n]^{-1}\cdot\sum_{k=0}^n\frac{d_{\lambda^*}\binom{k}{r}}{d_\lambda}\mathbf{P}[\Pois(\gamma_t)=k]\\
&=\left(1-\frac rn+O(n^{-2+o(1)})\right)\sum_{k=0}^n\binom{k}{r}\cdot r!n^{-r}\cdot\frac{\gamma_t^k e^{-\gamma_t}}{k !}\\
&=\left(1-\frac rn+O(n^{-2+o(1)})\right)\sum_{k=r}^nn^{-r}\cdot\frac{\gamma_t^k e^{-\gamma_t}}{(k-r)!}\\
&=\left(1-\frac rn+O(n^{-2+o(1)})\right)n^{-r}\cdot\gamma_t^r\cdot\mathbf{P}[\Pois(\gamma_t)\le n-r]\\
&=\left(1-\frac rn+O(n^{-2+o(1)})\right)\exp\left(-2rt/n\right).
\end{split}
\end{align}
Here, the second line comes from the second item of \Cref{lem: dimension bound}. This completes the proof.
\end{proof}

\begin{thm}\label{thm: removal for nu}
Suppose $t=\frac n2(\log n-\frac 12\log l+c)$, and $\widehat{\nu_n^t}(\l)=f_\nu(\l)\Id_{d_\l}$. Then, we have
$$\sum_{\substack{\lambda\vdash n\\\lambda\notin \mc{L}}}d_\lambda K_{\lambda,(l^m)}|f_\nu(\lambda)|^2=n^{-\omega(1)}.$$
\end{thm}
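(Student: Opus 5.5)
The plan is to use the structure of $\widehat{\nu_n^t}$ given by \Cref{thm :zetaM_fourier_An} together with the first bound of \Cref{thm: character estimate for nu}. The first observation is that $f_\nu(\lambda)$ is a mixture of the scalars $\binom{k}{r}d_{\lambda^*}/d_\lambda$ over $k\le n$, which vanish unless $k\ge r$. For $r=n-\lambda_1>\log n$, the upper bound of \Cref{thm: character estimate for nu} reads
$$|f_\nu(\lambda)|\le \prod_{j=1}^{r-1}\Bigl(1-\frac jn\Bigr)^{-1}\, n^{-r}\gamma_t^r\,\frac{\mathbf P[\Pois(\gamma_t)\le n-r]}{\mathbf P[\Pois(\gamma_t)\le n]}.$$
Here I would keep the factor $\prod(1-j/n)^{-1}=n^r(n-r)!/n!$ in the exact form $\binom{n}{r}^{-1}n^r/r!$. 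This gives $|f_\nu(\lambda)|\le \binom nr^{-1}\gamma_t^r/r!$. Note that $\gamma_t=e^{-c}\sqrt l\le e^{-c}\sqrt n$.

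I would split according to $r$, as in \Cref{thm: removal for P}, and bound $\sum_{\lambda_1=n-r}d_\lambda K_{\lambda,(l^m)}$ by $\binom nr^2 m^r$ via \Cref{lem: upper bound of dK}. Then
$$\sum_{\lambda_1=n-r}d_\lambda K_{\lambda,(l^m)}|f_\nu(\lambda)|^2\le \binom nr^2m^r\binom nr^{-2}\frac{\gamma_t^{2r}}{(r!)^2}=\frac{(m\gamma_t^2)^r}{(r!)^2}\le \frac{(e^{-2c}n)^r}{(r!)^2}.$$
This is only useful when $r$ is large compared with $\sqrt n$, so I need an extra ingredient for moderate $r$.

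For $\log n<r\le n^{0.9}$ (or any range up to $cn$), I would instead use the cruder bounds $d_\lambda\le n^r/\sqrt{r!}$ from \Cref{lem: dimension bound} and $K_{\lambda,(l^m)}\le m^r$ from \Cref{lem: estimate of Kostka}, along with $p(r)\le e^{O(\sqrt r)}$. Combined with $|f_\nu|\le \binom nr^{-1}\gamma_t^r/r!\le (1+o(1))^r\, r!\,n^{-r}\gamma_t^r/r!$ for $r\le n^{0.9}$, this gives a total of roughly
$$p(r)\,\frac{n^r m^r}{\sqrt{r!}}\,\frac{\gamma_t^{2r}}{n^{2r}}e^{O(r^2/n)}=p(r)\frac{(m l e^{-2c}/n)^r}{\sqrt{r!}}e^{O(r^2/n)}=\exp\Bigl(-\tfrac12 r\log r+O(r)+O(r^2/n)\Bigr),$$
using $ml=n$. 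This is $n^{-\omega(1)}$ for $r\ge\log n$. This is the same computation as case 3 of \Cref{thm: removal for P}, with $\exp(-2rt/n)=\gamma_t^r n^{-r}$.

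For $r\ge n^{0.9}$, the displayed $\binom nr^2$-bound gives $(e^{-2c}n)^r/(r!)^2\le \exp(r\log n-2r\log r+O(r))\le\exp(-0.7r\log n)$, which is negligible. Summing over at most $n$ values of $r$ preserves $n^{-\omega(1)}$. Partitions with long first column need no separate treatment, since $f_\nu$ depends only on $r=n-\lambda_1$ and vanishes once $r>n$. Moreover, $K_{\lambda,(l^m)}=0$ whenever $\lambda_1'>m$, and in any case the bounds above were uniform in the shape.

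The main obstacle is the boundary between the two ranges. One has to check that the factor $\prod(1-j/n)^{-1}=e^{O(r^2/n)}$ is dominated by $r!^{1/2}$ for $r$ up to $n^{0.9}$: here $r^2/n\le r\cdot n^{-0.1}\ll r\log r$, which is fine. One also needs the regimes to overlap. If the overlap needs adjusting, I would move the threshold to $r\le n/(\log n)^2$ in the first range, where $e^{O(r^2/n)}$ is still $e^{o(r)}$.
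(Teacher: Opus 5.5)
Your argument is correct and is essentially the paper's proof. You split by $r=n-\lambda_1$. For moderate $r$ you use the crude bounds $d_\lambda\le n^r/\sqrt{r!}$, $K_{\lambda,(l^m)}\le m^r$ and $p(r)\le e^{O(\sqrt r)}$, where $m\gamma_t^2=e^{-2c}n$ collapses everything to $e^{-\frac12 r\log r+O(r)}$. For $r$ comparable to $n$ you use \Cref{lem: upper bound of dK} together with the identity $\binom nr\prod_{j<r}(1-j/n)^{-1}=n^r/r!$.

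The only difference is the cut point. The paper bounds $\prod_{j<r}(1-j/n)^{-1}\le 10^r$ for $r\le 0.9n$ and absorbs it into the $O(r)$ term, so its first range extends to $0.9n$ and the boundary issue you flag never arises. Your $e^{O(r^2/n)}$ estimate with the cut at $n^{0.9}$ works equally well.
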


\begin{proof}
Set $r:=n-\l_1$. When $r\in[\log n,0.9n]$, we have
$$\left(1-\frac 1n\right)^{-1}\cdots\left(1-\frac{r-1}{n}\right)^{-1}\le 10^r.$$
Therefore, it follows from the first item of \Cref{lem: dimension bound} and the second item of \Cref{lem: estimate of Kostka} that
\begin{align}
\begin{split}\sum_{n-\l_1=r}d_\lambda K_{\lambda,(l^m)}|f_\nu(\lambda)|^2&\le p(r)\cdot\binom nr\sqrt{r!}\cdot m^r\cdot\left(1-\frac 1n\right)^{-2}\cdots\left(1-\frac{r-1}{n}\right)^{-2}\exp\left(-4rt/n\right)\\
&\le \frac{p(r)10^r}{\sqrt{r!}}n^rm^r\exp\left(-4rt/n\right)\\
&\lesssim \exp(\pi\sqrt{2r/3}-\frac 12 r\log r+O(r))\\
&=n^{-\omega(1)}.
\end{split}
\end{align}
Summing over all $r\in[\log n,0.9n]$ gives $n^{-\omega(1)}$.
When $r\in[0.9n,n]$, by \Cref{lem: upper bound of dK} we have
\begin{align}
\begin{split}\sum_{n-\l_1=r}d_\lambda K_{\lambda,(l^m)}|f_\nu(\lambda)|^2&\le \binom nr^2\cdot m^r\cdot\left(1-\frac 1n\right)^{-2}\cdots\left(1-\frac{r-1}{n}\right)^{-2}\exp\left(-4rt/n\right)\\
&\le \frac{1}{(r!)^2}n^{2r}m^r\exp\left(-4rt/n\right)\\
&=\frac{n^r}{(r!)^2}\exp(-2cr)\\
&=n^{-\omega(1)}.
\end{split}
\end{align} 
Summing over all $r\in[0.9n,n]$ gives $n^{-\omega(1)}$. This completes the proof.
\end{proof}

\subsection{Proof of \Cref{thm: restate of pnkt and nu}}

We already have the necessary preparation to prove \Cref{thm: restate of pnkt and nu}. 

\begin{proof}[Proof of \Cref{thm: restate of pnkt and nu}]

For all $\l\vdash n$, denote 
$$f_P(\l):=\left(\frac 1n+\frac{n-1}{n}\frac{\chi_\lambda(\tau)}{d_\lambda}\right)^t,$$ 
so that $\widehat {P_n^{*t}}(\l)=f_P(\l)\Id_{d_\lambda}$. Let $f_\nu(\l)$ be defined the same way as in \Cref{thm: character estimate for nu}. Using the Cauchy-Schwarz inequality followed by \Cref{prop: quotient L2 formula}, we have that (the dagger superscript refers to the conjugate transpose matrix)
\begin{align}
\begin{split}
4\cdot d_{TV}(\sim_l\backslash P_n^{*t},\sim_l\backslash\nu_n^t)^2&\le \frac{n!}{(l!)^m}\sum_{\tilde\sigma\in\sim_l\backslash\S_n}\left(\sum_{\varsigma\in\tilde\sigma}(P_n^{*t}(\varsigma)-\nu_n^t(\varsigma))\right)^2\\
&=\sum_{\l\vdash n}d_\l K_{\lambda,(l^m)}|f_P(\l)-f_\nu(\l)|^2\\
&\le\sum_{\l\in\mathcal{L}}d_\l K_{\lambda,(l^m)}|f_P(\l)-f_\nu(\l)|^2+2\sum_{\l\notin\mathcal{L}}\left(|f_P(\l)|^2+|f_\nu(\l)|^2\right)\\
&=n^{-\omega(1)}+\sum_{\l\in\mathcal{L}}d_\l K_{\lambda,(l^m)}|f_P(\l)-f_\nu(\l)|^2.\\
\end{split}
\end{align}
Here, the final line follows from the bounds  given in \Cref{thm: removal for P} and \Cref{thm: removal for nu}. Now, we only need to prove
$$\sum_{\l\in\mathcal{L}}d_\l K_{\lambda,(l^m)}|f_P(\l)-f_\nu(\l)|^2\le n^{-2+o(1)}.$$ 
Applying the estimates given in \Cref{prop: f_P long first row} and \Cref{thm: character estimate for nu}, we have
$$|f_P(\l)-f_\nu(\l)|=n^{-1+o(1)}\exp(-2rt/n)$$
for all $\l\in\mathcal{L}$. Therefore, we have 
\begin{align}
\begin{split}
\sum_{\l\in\mathcal{L}}d_\l K_{\lambda,(l^m)}|f_P(\l)-f_\nu(\l)|^2&=\sum_{r=0}^{\log n}\sum_{n-\l_1=r}d_\l K_{\lambda,(l^m)}|f_P(\l)-f_\nu(\l)|^2\\
&\le n^{-2+o(1)}\cdot\sum_{r=0}^{\log n} p(r)\cdot\frac{n^r}{\sqrt{r!}}m^r\exp(-4rt/n)\\
&=n^{-2+o(1)}\cdot\sum_{r=0}^{\log n}\exp(\pi\sqrt{2r/3})\cdot\frac{\exp(-2cr)}{\sqrt{r!}}\\
&=n^{-2+o(1)}.
\end{split}
\end{align}
This completes the proof.
\end{proof}

\section{From tractable measure to total variation distance}\label{sec: From tractable measure to total variation distance}

In this section, we prove the following theorem, which elaborates \Cref{thm: limiting profile_main thm} in detail.

\begin{thm}\label{thm: restate of limiting profile}
Let $c\in\R$ be an absolute constant. Let $(n_N)_{N\ge 1},(m_N)_{N\ge 1}$ and $(l_N)_{N\ge 1}$ be sequences of positive integers, such that 
\begin{enumerate}
\item $m_N,l_N\ge 2$ for all $N\ge 1$, and $\lim_{N\rightarrow\infty}l_N=\infty$.
\item $n_N=m_Nl_N$ for all $N\ge 1$.
\end{enumerate}
For all $N\ge 1$, let $t_N=\frac{n_N}{2}(\log n_N-\frac 12\log l_N+c)$. Let $\sim_{l_N}\backslash P_{n_N}^{*t_N}$ be defined as in \Cref{defi: Pnk and its convolution}, and let $\sim_{l_N}\backslash U_{\S_{n_N}}$ be defined as in \Cref{defi: quotient of space}. Then, we have
$$\lim_{N\rightarrow\infty}d_{TV}(\sim_{l_N}\backslash P_{n_N}^{*t_N},\sim_{l_N}\backslash U_{\S_{n_N}})=2\Phi\left(\frac{\exp(-c)}{2}\right)-1$$
when $\lim_{N\rightarrow\infty}m_N=\infty$, and 
$$\lim_{N\rightarrow\infty}d_{TV}(\sim_{l_N}\backslash P_{n_N}^{*t_N},\sim_{l_N}\backslash U_{\S_{n_N}})=2\Phi\left(\sqrt{1-\frac 1m}\frac{\exp(-c)}{2}\right)-1$$
when $m_N=m\ge 2$ is fixed. 
\end{thm}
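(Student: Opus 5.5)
By \Cref{thm: restate of pnkt and nu} and the triangle inequality, it suffices to compute $\lim d_{TV}(\sim_l\backslash\nu_n^t,\sim_l\backslash U_{\S_n})$, where $\gamma_t=e^{-c}\sqrt l\to\infty$. The plan is to describe the quotient image of $\nu_n^t$ explicitly, reduce the total variation distance to a likelihood-ratio statistic, and then prove a Gaussian limit for that statistic.

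\textbf{Step 1: the quotient of $\nu_n^t$ as a mixture.} A sample of $\nu_n^t$ is a uniform permutation of $[n]\setminus S_t$ that fixes $S_t$ pointwise. In the quotient, this means the cards at positions in $S_t$ keep their original types, while the remaining cards are uniformly shuffled. For an arrangement $x\in\sim_l\backslash\S_n$, write $F(x)$ for the set of positions $j$ where $x$ carries its ``home'' type, i.e.\ $j\in T_{i,l}$ carries type $i$. The conditional law given $S_t=S$ is uniform on the arrangements with $S\subseteq F(x)$. Therefore the Radon--Nikodym derivative with respect to uniform is
\[
\frac{d(\sim_l\backslash\nu_n^t)}{d(\sim_l\backslash U)}(x)=\mathbf E_{M}\Bigl[\binom{|F(x)|}{M}\binom nM^{-1}\Big/\mathbf P_U\bigl(S\subseteq F\bigr)\Bigr].
\]
Here $\mathbf P_U(S\subseteq F)$ depends only on the type counts $s_i=|S\cap T_{i,l}|$, through $\prod_i l^{\underline{s_i}}\,/\,n^{\underline M}$, and $\binom{|F|}{M}$ should also be refined by type counts. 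So the density is a function of the vector $(F_1,\dots,F_m)$, where $F_i=|F(x)\cap T_{i,l}|$.

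\textbf{Step 2: reduction to a single statistic.} Since $M\approx\gamma_t\sim\sqrt l\ll l$, a Poissonization expansion should show that the density is $\exp\bigl(\gamma_t\,\frac{W-\mathbf E W}{l}-\tfrac12\gamma_t^2\sigma^2/l^2+o(1)\bigr)$ in probability. Here $W=|F(x)|=\sum_i F_i$ is the quotient fixed-point count. Each $F_i$ is hypergeometric with mean $l/m$, so $W$ has mean $l$. Its variance under uniform is $l(1-1/m)(1+o(1))$ in the regime $l\to\infty$, because the finite-population correction gives the factor $(1-1/m)$ on the diagonal. To obtain a Gaussian law for $W$, I would represent $W=\sum_j \mathbf 1\{\text{type at }j=\text{home}(j)\}$ for a uniform permutation. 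This is the Hoeffding statistic $\sum_j a_{j,\pi(j)}$ with $a_{j,k}=\mathbf 1\{k\equiv j \bmod m\}$. Hoeffding's combinatorial CLT, whose Lindeberg-type condition holds since the variance is of order $l\to\infty$, then gives $(W-l)/\sqrt{l(1-1/m)}\Rightarrow\mathcal N(0,1)$ for fixed $m$. For $m\to\infty$ the variance is $l(1+o(1))$.

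\textbf{Step 3: identifying the limit.} Under $\nu$, the additional $\approx\gamma_t$ forced fixed points shift the mean of $W$ by $\gamma_t(1-1/m)$ to first order, while leaving the variance unchanged to leading order. In standardized units this is a shift of $\gamma_t\sqrt{1-1/m}/\sqrt l=e^{-c}\sqrt{1-1/m}$. The total variation distance between $\mathcal N(0,1)$ and $\mathcal N(\delta,1)$ is $2\Phi(\delta/2)-1$. This yields both formulas, since $\sqrt{1-1/m}\to1$ when $m\to\infty$.

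\textbf{Main obstacle.} The hard step is making Step 2 rigorous. One must show that total variation is asymptotically determined by $W$ alone, i.e.\ that the refined type-count dependence is negligible. One must also control the fluctuation of $M$ around $\gamma_t$, and the tails where $W$ deviates by $\gg\sqrt l$. I would handle this as follows. First, condition on $M$ and use concentration of $M$ within $\gamma_t^{1/2+\epsilon}$. Second, prove that under both measures the conditional law of $(F_i)$ given $W$ agrees up to $o(1)$ in total variation, by a local exchangeability argument. Third, use Chebyshev-type tail bounds on $W$ to truncate. Finally, apply the CLT to $W$ under $\nu$ directly. Under $\nu$, conditional on $S_t$, $W=|S_t|+W'$, where $W'$ is again a Hoeffding statistic on the complement. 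This gives the shifted Gaussian limit without computing the density explicitly.
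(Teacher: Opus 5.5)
\textbf{Verdict: genuine gap.} Your architecture is the paper's. After \Cref{thm: restate of pnkt and nu}, the density of $\sim_l\backslash\nu_n^t$ with respect to uniform depends only on $\diag=(F_1,\dots,F_m)$; this is the content of \Cref{prop: transfer to diag}. Also, $W=|F_{n,l}|$ obeys Hoeffding's CLT under both measures, with mean shift $(1-\frac1m)e^{-c}\sqrt l$ and unchanged leading-order variance. But the step you flag as the main obstacle is never carried out, and your plan for it does not close the argument.

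First, suppose $(W-l)/\sqrt{(1-1/m)l}$ converges in law to $\mathcal N(0,1)$ under $\tau$ and to $\mathcal N(\delta,1)$ under $\sigma$. This does not imply $d_{TV}(W(\sigma),W(\tau))\to 2\Phi(\delta/2)-1$. Total variation between integer-valued laws is not continuous under weak convergence, and Hoeffding's theorem is not a local limit theorem. So ``apply the CLT to $W$ under $\nu$ directly \ldots\ without computing the density'' cannot be the final step; you need information about the likelihood ratio.

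Second, ``local exchangeability'' supplies no reason why the conditional laws of $(F_i)$ given $W$ should agree under the two measures. They agree only because the density is, on a high-probability set, a function of $W$ up to a factor $1+o(1)$. That is precisely the density computation you hoped to avoid.

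That computation is the heart of the paper's proof. Your Step 1 display cannot factor out $\mathbf P_U(S\subseteq F)$, since it depends on the type counts of $S$. The correct form, from \Cref{lem: distribution of X}, is as follows: conditional on $|S_t|=k$, the likelihood ratio at $\diag=a$ equals $k!\sum_{r_1+\cdots+r_m=k}\prod_i\binom{a_{ii}}{r_i}(l)_{r_i}^{-1}=(W)_k\,\mathbf E_r\bigl[\prod_i (l)_{r_i}^{-1}\bigr]$. Here $r$ is multivariate hypergeometric on blocks of sizes $a_{ii}$.

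Since $k\asymp\sqrt l$, uniformly in $r$ one has $\prod_i(l)_{r_i}=l^k\exp\bigl(-\sum_i r_i^2/(2l)+o(1)\bigr)$. So you need concentration of $\sum_i r_i^2/l$:
\begin{itemize}
\item For fixed $m$ it is close to $k^2/(ml)$. This requires restricting to the bulk $|a_{ii}-l/m|\le l^{2/3}$.
\item For $m\to\infty$ it is $o(1)$. This requires proving $\sum_i X_{ii}^2=o(l^2)$ with high probability.
\end{itemize}
These are \Cref{lem: concentration of St_fixed m}, \Cref{lem: tail bound square sum_growing m} and \Cref{lem: concentration of St_growing m}. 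Atypical $r$ are controlled by $\prod_i(l)_{r_i}\ge (l)_k$.

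The result is $L(a)=\frac{(W)_k}{l^k}e^{k^2/(2ml)}(1+o(1))$. Hence $L\gtrless 1$ according as $W\gtrless l+\frac12(1-\frac1m)k$, up to margins of order $c_0\sqrt l$ that are sent to zero at the end. The paper uses exactly this sign information to write the total variation as a difference of half-line probabilities of $W$, where weak convergence does suffice.

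Alternatively, once this estimate is in hand, your Step 2 form finishes more cheaply. On the bulk, $L=\exp(\delta Z_\tau-\delta^2/2+o(1))$, and $d_{TV}=\mathbf E_\tau[(1-L)_+]$ has a bounded integrand. So you need the CLT only under the uniform measure, not under $\sigma$.
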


As in \Cref{sec: Approximating the shuffling with an explicitly tractable measure}, for notational convenience, we suppress the subscript $N$ for the rest of this section.

\begin{defi}
Let $n=ml$. We define the \emph{fixed point set} $F_{n,l}:\S_n\rightarrow 2^{[n]}$ as follows. For $\sigma\in\S_n$, let
$$F_{n,l}(\sigma):=\{i\in [n]:\ \sigma(i)\equiv i \pmod m\}.$$
The function $F_{n,l}$ naturally descends to a well-defined function on $\sim_l\backslash\S_n$, which we still denote by $F_{n,l}$. 
\end{defi}

For \(1\le i,j\le m\) and $\sigma\in\S_n$, define 
\[
X_{ij}=X_{ij}(\sigma):=\bigl|\{a\in T_{i,l}:\sigma(a)\in T_{j,l}\}\bigr|.
\]
Roughly speaking, $X_{ij}$ records how many cards of type $i$ are sent by the shuffle to the positions occupied by cards of type $j$. Then, we must have $\sum_{i=1}^m X_{ii}(\sigma)=F_{n,l}(\sigma)$ for all $\sigma\in\S_n$. We also write
$$X(\sigma):=(X_{ij}(\sigma))_{1\le i,j\le m}\in\Mat_m(\Z_{\ge 0}),\quad\diag(\sigma)=(X_{11}(\sigma),\ldots,X_{mm}(\sigma))\in\Z_{\ge 0}^m,\quad\forall\sigma\in\S_n.$$
It is clear that every row and column of $X(\sigma)$ has sum $l$. Similarly to $F_{n,l}(\cdot)$, the functions $\diag(\cdot)$ and $X(\cdot)$ also descend to  $\sim_l\backslash\S_n$, where we will use the same symbol for these functions. 

When the construction of the subset $S_t\subset [n]$ is clear from the text, we write 
$$R_i:=|S_t\cap T_{i,l}|,\quad\forall 1\le i\le m,$$
so that $\sum_{i=1}^m R_i=|S_t|$.

In order to simplify the computations, we introduce the following truncated version $\mu_n^t$ of $\nu_n^t$.

\begin{defi}\label{defi: construction of mu}
Suppose $t\in\Z$. Set $t':=t-\lfloor\frac{n\log n}{2}\rfloor$, and $\gamma_t:=\exp(-2t'/n)$, which is the same as in \Cref{defi: construction of nu}. Let $\mu_n^t$ denote the measure on $\S_n$ defined by the following sampling process: first, sample $M_t\in\{0,1,\ldots,n\}$ according to the distribution 
$$\mathbf{P}[M_t=k]=\begin{cases}\mathbf{P}(\Pois(\gamma_t)=k)/\mathbf{P}(\gamma_t-l^{1/3}\le\Pois(\gamma_t)\le \gamma_t+l^{1/3})&\gamma_t-l^{1/3}\le k\le \gamma_t+l^{1/3}\\
0 & \text{else}
\end{cases},$$ 
then sample a uniformly random subset $S_t$ of size $M_t$ in $[n]$. Finally, sample a uniformly random element of $\S_{[n]\backslash S_t}$ and view it as an element of $\S_n$ by fixing all of the elements in $S_t$.
\end{defi}

\begin{prop}\label{prop: transfer to diag}
Let $n=ml$, such that $m,l\ge 2$. Suppose 
$t=\frac n2\left(\log n-\frac 12\log l+c\right)$,  where $c\in \R$ is an absolute constant. Then, we have 
$$d_{TV}(\sim_l\backslash P_{n}^{*t},\sim_l\backslash U_{\S_n})=d_{TV}(\diag(\sigma),\diag(\tau))+o(1).$$
Here, $\tau\in\S_n$ be distributed with respect to $U_{\S_n}$, and $\sigma\in\S_n$ be distributed with respect to $\mu_n^t$.
\end{prop}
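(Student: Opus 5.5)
The plan is to pass from $P_n^{*t}$ to $\mu_n^t$ in two approximation steps, and then show that on the quotient $\sim_l\backslash\S_n$ the density of $\sim_l\backslash\mu_n^t$ with respect to the uniform measure depends only on $\diag(\cdot)$. Total variation then collapses onto the statistic $\diag$. Throughout I use $l=\omega(1)$, as in the setting of \Cref{thm: restate of limiting profile}; the $o(1)$ genuinely needs it.

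\textbf{Step 1 (approximation).} By \Cref{thm: pnkt and nu}, $d_{TV}(\sim_l\backslash P_n^{*t},\sim_l\backslash\nu_n^t)\le n^{-1+o(1)}=o(1)$. Next, $\nu_n^t$ and $\mu_n^t$ are obtained from the same kernel (given $M_t=k$, choose $S_t$ and a uniform permutation of $[n]\setminus S_t$). They differ only in the law of $M_t$, which is $\Pois(\gamma_t)$ conditioned on $\{\le n\}$, respectively on $\{|\cdot-\gamma_t|\le l^{1/3}\}$. Hence their total variation distance is at most
\[
d_{TV}(\mathcal L_\nu(M_t),\mathcal L_\mu(M_t))\le 2\,\mathbf P\bigl[|\Pois(\gamma_t)-\gamma_t|>l^{1/3}\bigr]+n^{-\omega(1)}.
\]
Since $\gamma_t=e^{-c}\sqrt l$, Chebyshev bounds the probability by $e^{-c}l^{1/2}/l^{2/3}=o(1)$. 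The same bound holds after projecting to the quotient, so by the triangle inequality
\[
d_{TV}(\sim_l\backslash P_n^{*t},\sim_l\backslash U_{\S_n})=d_{TV}(\sim_l\backslash\mu_n^t,\sim_l\backslash U_{\S_n})+o(1).
\]

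\textbf{Step 2 (sufficiency of $\diag$).} Identify a left coset $H\sigma$, with $H=(\S_l)^m$, with the type function $\kappa_\sigma:i\mapsto (\sigma(i)\bmod m)$. Each type value is attained exactly $l$ times, and $H\sigma=H\sigma'$ iff $\kappa_\sigma=\kappa_{\sigma'}$. Fix a set $S$ with $|S|=k$, and let $\pi$ be uniform on $\S_{[n]\setminus S}$ (extended by fixing $S$). Then $\pi$ lies in $H\sigma$ iff two things hold:
\begin{enumerate}
\item $S\subset F_{n,l}(\sigma)$;
\item $\pi$ restricted to $[n]\setminus S$ realizes the prescribed types.
\end{enumerate}
Writing $R_j=|S\cap T_{j,l}|$, the number of such $\pi$ is $\prod_j(l-R_j)!$. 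Therefore
\[
\mu_n^t(H\sigma)=\sum_k \mathbf P[M_t=k]\binom nk^{-1}\sum_{\substack{S\subset F_{n,l}(\sigma)\\|S|=k}}\frac{\prod_{j}(l-R_j)!}{(n-k)!}.
\]
The points of $F_{n,l}(\sigma)\cap T_{j,l}$ are exactly the $X_{jj}(\sigma)$ type-$j$ cards sent to type-$j$ positions. So the inner sum equals $\sum_{(R_j)}\prod_j\binom{X_{jj}(\sigma)}{R_j}(l-R_j)!$, which depends on $\sigma$ only through $\diag(\sigma)$. Since the uniform measure gives every coset the same mass, the density $g:=d(\sim_l\backslash\mu_n^t)/d(\sim_l\backslash U_{\S_n})$ factors as $g=\tilde g\circ\diag$.

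\textbf{Step 3 (conclusion).} Whenever a density factors through a statistic $D$, one has $d_{TV}(\mu,U)=\frac12\mathbf E_U|g-1|=\frac12\mathbf E_U|\tilde g(D)-1|=d_{TV}(D_*\mu,D_*U)$. Here the last equality holds because $\tilde g(D)$ is exactly the density of $D_*\mu$ with respect to $D_*U$. Applying this with $D=\diag$, which is well defined on the quotient, gives $d_{TV}(\sim_l\backslash\mu_n^t,\sim_l\backslash U_{\S_n})=d_{TV}(\diag(\sigma),\diag(\tau))$, and together with Step 1 this proves the proposition.

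I expect the main care to be needed in Step 2: getting the coset-membership condition and the count $\prod_j(l-R_j)!$ right, given the left-coset convention of \Cref{defi: quotient of space}. If the convention turned out to act on the other side, I would redo the count with $\sigma^{-1}$. The fixed-point condition is symmetric under that change, so the same sufficiency conclusion should follow.
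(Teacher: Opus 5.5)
Your proposal is correct and follows the paper's route. You first pass from $P_n^{*t}$ to $\nu_n^t$ via \Cref{thm: pnkt and nu} and then to $\mu_n^t$ by Poisson concentration at $\gamma_t=e^{-c}\sqrt l$; you then collapse total variation onto $\diag$, because the law of $\sim_l\backslash\mu_n^t$ is uniform on the fibers of $\diag$, equivalently its density against the uniform measure factors through $\diag$. Your explicit coset count $\prod_j(l-R_j)!$ actually proves that fiber-uniformity for $\sigma$, which the paper only asserts. You are also right that the $o(1)$ needs $l=\omega(1)$; this holds in the setting of \Cref{thm: restate of limiting profile}, where the proposition is applied, although the proposition's own hypotheses omit it.
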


\begin{proof}
Let $\tau\in\S_n$ be distributed with respect to $U_{\S_n}$, and $\sigma\in\S_n$ be distributed with respect to $\mu_n^t$. Since $\gamma_t=\exp(-c)\sqrt{l}$, it follows naturally from the scaling local limit of poisson distribution that
$$d_{TV}(\sim_l\backslash \mu_n^t,\sim_l\backslash \nu_n^t)\le d_{TV}(\mu_n^t,\nu_n^t)=o(1).$$
Therefore, following \Cref{thm: pnkt and nu}, it suffices to prove 
$$d_{TV}(\sim_l\backslash \mu_n^t,\sim_l\backslash U_{\S_n})=d_{TV}(\diag(\sigma),\diag(\tau)).$$
Observe that for all fixed $(a_{11},\ldots,a_{mm})\in\Z_{\ge 0}^m$ such that $\mathbf{P}(\diag(\tau)=(a_{11},\ldots,a_{mm}))>0$, the conditional law of $\sim_l\backslash\tau$ under the event $\diag(\tau)=(a_{11},\ldots,a_{mm})$ is uniform on the set 
$$\{\sim_l\backslash\tau\in\sim_l\backslash\S_n:\diag(\tau)=(a_{11},\ldots,a_{mm})\}.$$
Also, the same uniform assertion holds for the random permutation $\sigma$. Since the conditional laws of $\sim_l\backslash\tau$ and $\sim_l\backslash\sigma$ coincide on every fiber of $\diag(\cdot)$, the total variation distance between their laws is completely determined by the masses assigned to these fibers. Equivalently, passing to the quotient space determined by $\diag(\cdot)$ does not change the total variation distance, and hence
$$
d_{TV}(\sim_l\backslash \mu_n^t,\sim_l\backslash U_{\S_n})
=
d_{TV}(\diag(\sigma),\diag(\tau)).
$$
This proves the proposition. 
\end{proof}

\begin{lemma}\label{lem: distribution of X}
Let $n=ml$, such that $m,l\ge 2$. Suppose the entries of $A=(a_{ij})\in\Mat_m(\Z)$ are nonnegative, and all the rows and columns have sum $l$. Let $\tau$ be distributed with respect to $U_{\S_n}$. Then, we have
$$\mathbf{P}(X(\tau)=A)=\frac{(l!)^{2m}}{n!\cdot\prod_{1\le i,j\le m}a_{ij}!}.$$
Moreover, suppose 
$t=\frac n2\left(\log n-\frac 12\log l+c\right)$,  where $c\in \R$ is an absolute constant. Let $\sigma$ be distributed with respect to $\mu_n^t$. Then, for all $k\ge 0$, we have
$$\mathbf{P}(X(\sigma)=A\mid |S_t|=k)=\sum_{r_1+\cdots+r_m=k}\frac{k!(l!)^m\prod_{i=1}^m(l-r_i)!}{n!\prod_{i\ne j}a_{ij}!}\prod_{i=1}^m\binom{a_{ii}}{r_i}.$$
\end{lemma}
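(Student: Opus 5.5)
Both formulas come from direct counting, with the multiset of positions $T_{j,l}$ playing the role of card type $j$.

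For the uniform part, $\mathbf P(X(\tau)=A)$ equals the number of $\sigma\in\S_n$ with $X(\sigma)=A$, divided by $n!$. To build such a $\sigma$ we proceed in two stages. First, for each $i$, partition $T_{i,l}$ into blocks $B_{i1},\ldots,B_{im}$ with $|B_{ij}|=a_{ij}$; here $B_{ij}$ is the set of $a\in T_{i,l}$ that are sent into $T_{j,l}$. The number of ways is $\prod_i l!/\prod_j a_{ij}!$. Second, for each $j$, choose a bijection from $\bigsqcup_i B_{ij}$ onto $T_{j,l}$. This set has size $\sum_i a_{ij}=l$ because the column sums equal $l$, so there are $l!$ choices for each $j$. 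Multiplying gives $(l!)^{2m}/\prod_{i,j}a_{ij}!$ permutations, and dividing by $n!$ gives the first formula.

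For $\sigma\sim\mu_n^t$ conditioned on $|S_t|=k$, the set $S_t$ is a uniform $k$-subset. We first condition further on $R_i=|S_t\cap T_{i,l}|=r_i$. This has probability $\prod_i\binom{l}{r_i}/\binom nk$. Given $S_t$, $\sigma$ fixes $S_t$ pointwise and is uniform on $\S_{[n]\setminus S_t}$. The restricted permutation $\pi\in\S_{n-k}$ sends elements of $T_{i,l}\setminus S_t$, which has $l-r_i$ elements, to positions, and we need $X(\sigma)=A$. Points of $S_t$ contribute to the diagonal, so we need $X(\pi)=A-\diag(r)$ relative to the blocks $T_{i,l}\setminus S_t$. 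In that residual matrix row $i$ and column $i$ both sum to $l-r_i$, and the diagonal entries are $a_{ii}-r_i$.

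Repeating the two-stage count with unequal block sizes gives
\[
\#\{\pi: X(\pi)=A-\diag(r)\}=\frac{\prod_i (l-r_i)!^2}{\prod_{i\ne j}a_{ij}!\,\prod_i (a_{ii}-r_i)!}.
\]
This count is zero unless $r_i\le a_{ii}$ for all $i$, which matches $\binom{a_{ii}}{r_i}=0$ in the target formula. Dividing by $(n-k)!$ and multiplying by the subset probability, the summand for fixed $r$ becomes
\[
\frac{k!(n-k)!}{n!}\prod_i\frac{l!}{r_i!(l-r_i)!}\cdot\frac{\prod_i(l-r_i)!^2}{(n-k)!\prod_{i\ne j}a_{ij}!\prod_i(a_{ii}-r_i)!}.
\]
Collecting factors, and using $\frac{1}{r_i!(a_{ii}-r_i)!}=\binom{a_{ii}}{r_i}/a_{ii}!$, this should simplify to
\[
\frac{k!(l!)^m\prod_i(l-r_i)!}{n!\prod_{i\ne j}a_{ij}!}\prod_i\frac{\binom{a_{ii}}{r_i}}{a_{ii}!}.
\]
Summing over all $r$ with $\sum_i r_i=k$ then gives the conditional law.

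The main obstacle is the bookkeeping in this last step. As computed, a leftover factor $\prod_i a_{ii}!^{-1}$ appears that is absent from the displayed formula. I would recheck the counting and recheck the normalization. If the discrepancy persists, I would read the stated formula as containing that factor, or as counting quotient classes with a different convention. In either case the derivation above is the intended route.
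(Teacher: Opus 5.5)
Your argument is the paper's argument: split by $r_i=|S_t\cap T_{i,l}|$, count permutations of $[n]\setminus S_t$ realizing the reduced matrix $A-\diag(r)$, and normalize by $\binom nk(n-k)!=n!/k!$. Your two-stage count in the uniform case is a compressed form of the paper's domain/target/bijection count.

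Your algebra is correct, and the leftover factor is not a bookkeeping slip on your side. The displayed formula is wrong, and the correct statement is
\[
\mathbf P(X(\sigma)=A\mid |S_t|=k)=\sum_{r_1+\cdots+r_m=k}\frac{k!\,(l!)^m\prod_{i=1}^m(l-r_i)!}{n!\prod_{1\le i,j\le m}a_{ij}!}\prod_{i=1}^m\binom{a_{ii}}{r_i}.
\]
That is, the denominator must be $\prod_{1\le i,j\le m}a_{ij}!$, not $\prod_{i\ne j}a_{ij}!$.

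The paper's proof reaches exactly your intermediate expression and then asserts that it ``simplifies exactly'' to the displayed one. The identity it invokes, $\frac{k!}{\prod_i r_i!}\prod_i\frac{a_{ii}!}{(a_{ii}-r_i)!}=k!\prod_i\binom{a_{ii}}{r_i}$, requires a factor $\prod_i a_{ii}!$ in the numerator that is not present.

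A small case confirms this. Take $m=l=2$, $k=1$, $A=2I$. Then $X(\sigma)=2I$ holds iff the other element of the fixed point's block is fixed by the uniform permutation of the remaining three points, so the probability is $1/3$. The displayed formula gives $4/3$. Likewise, at $k=0$ the displayed formula exceeds the uniform-case formula by $\prod_i a_{ii}!$. Passing to the quotient cannot rescue it: $X$ is constant on cosets, so its law is the same on $\S_n$ and on $\sim_l\backslash\S_n$.

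So drop the hedging and state the corrected formula. It is also the version the paper actually relies on later. With $\prod_{i,j}a_{ij}!$ in the denominator,
\[
\frac{\mathbf P(X(\sigma)=A\mid |S_t|=k)}{\mathbf P(X(\tau)=A)}=k!\sum_r\prod_i (l)_{r_i}^{-1}\binom{a_{ii}}{r_i},
\]
which depends only on the diagonal of $A$. This is precisely the quantity compared with $1$ in the proof of the main theorem. The displayed version would insert an extra factor $\prod_i a_{ii}!$ there.
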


\begin{proof}
We first prove the formula for $\tau$. For each fixed $i$, partition the set $T_{i,l}$ into $m$ labeled pieces
\[
T_{i,l}=B_{i1}\sqcup\cdots\sqcup B_{im},
\qquad |B_{ij}|=a_{ij}.
\]
The number of ways to do this is again
\[
\frac{l!}{\prod_{j=1}^m a_{ij}!}.
\]
Multiplying over all $i$, the number of ways to decompose all domain blocks is
\[
\prod_{i=1}^m \frac{l!}{\prod_{j=1}^m a_{ij}!}
=\frac{(l!)^m}{\prod_{1\le i,j\le m} a_{ij}!}.
\]

Similarly, for each fixed $j$, partition the target block $T_{j,l}$ into $m$ labeled pieces
\[
T_{j,l}=C_{1j}\sqcup\cdots\sqcup C_{mj},
\qquad |C_{ij}|=a_{ij}.
\]
The number of ways to do this is
\[
\frac{l!}{\prod_{i=1}^m a_{ij}!},
\]
and hence the total number of ways to decompose all target blocks is
\[
\prod_{j=1}^m \frac{l!}{\prod_{i=1}^m a_{ij}!}
=\frac{(l!)^m}{\prod_{1\le i,j\le m} a_{ij}!}.
\]

Once these decompositions are fixed, for each pair $(i,j)$ we must choose a bijection
\[
B_{ij}\to C_{ij},
\]
and the number of such bijections is $a_{ij}!$. Therefore the total number of permutations
$\pi\in \S_n$ with $X(\pi)=A$ equals
\[
\frac{(l!)^m}{\prod_{i,j} a_{ij}!}\cdot
\frac{(l!)^m}{\prod_{i,j} a_{ij}!}\cdot
\prod_{i,j} a_{ij}!
=
\frac{(l!)^{2m}}{\prod_{i,j} a_{ij}!}.
\]
Since $\tau$ is uniform on $\S_n$, we conclude that
\[
\mathbf P(X(\tau)=A)
=
\frac{1}{n!}\cdot \frac{(l!)^{2m}}{\prod_{1\le i,j\le m} a_{ij}!}.
\]

We now prove the second formula. Consider nonnegative integers $r_1,\ldots,r_m$ with $r_1+\cdots+r_m=k$. Suppose the subset $S_t\subset[n]$ is already chosen, and that 
$$R_i:=|S_t\cap T_{i,l}|=r_i,\quad\forall1\le i\le m.$$
We now count the number of permutations $\sigma\in \S_{[n]\backslash S_t}$ such that $X(\sigma)=A$. Necessarily, among the $a_{ii}$ points contributing to $X_{ii}(\sigma)$, exactly $r_i$ of them are
the fixed points in $T_{i,l}$. Hence there are
$\binom{a_{ii}}{r_i}$ ways to choose which diagonal matches correspond to actual fixed points in block $T_{i,l}$. After these $r_i$ fixed points are chosen for each $i$, there remain $l-r_i$ non-fixed points
in the domain block $T_{i,l}$ and also $l-r_i$ non-fixed points in the target block $T_{i,l}$.
These remaining points must be matched according to the matrix obtained from $A$ by removing
$r_i$ from the $(i,i)$ entry for each $i$. Thus the effective matrix of non-fixed moves is
\[
A'=(a'_{ij}),\qquad
a'_{ij}=
\begin{cases}
a_{ii}-r_i,& i=j,\\
a_{ij},& i\ne j.
\end{cases}
\]
Applying the same counting argument as in the first part to this reduced matrix, the number of
such permutations is
\[
\left(\prod_{i=1}^m \binom{a_{ii}}{r_i}\right)
\cdot
\frac{(l!)^m\prod_{i=1}^m (l-r_i)!}
{\prod_{i\ne j} a_{ij}!\prod_{i=1}^m (a_{ii}-r_i)!}.
\]
Using
\[
\binom{a_{ii}}{r_i}=\frac{a_{ii}!}{r_i!(a_{ii}-r_i)!},
\]
one may also view this as first choosing the fixed points and then bijecting the remaining pieces;
however, the above form is the one that will match the desired probability formula after dividing
by the total number of possibilities.

Now, given $|S_t|=k$, the total number of possible outcomes in the construction of $\sigma$ is
\[
\binom{n}{k}(n-k)! = \frac{n!}{k!},
\]
since we choose the fixed set $S_t$ and then a permutation of its complement.
On the other hand, once $(r_1,\ldots,r_m)$ is fixed, the number of ways to choose the subset
$S_t$ so that $|S_t\cap T_{i,l}|=r_i$ for all $i$ is $\prod_{i=1}^m \binom{l}{r_i}$. For each such choice, the number of permutations of the complement realizing the matrix $A$
is exactly
\[
\frac{\prod_{i=1}^m (l-r_i)!^2}
{\prod_{i\ne j} a_{ij}!\prod_{i=1}^m (a_{ii}-r_i)!}.
\]
Therefore, summing over all $(r_1,\ldots,r_m)$ with total $k$, we get
\[
\mathbf P(X(\sigma)=A\mid |S_t|=k)
=
\frac{1}{\binom{n}{k}(n-k)!}
\sum_{r_1+\cdots+r_m=k}
\left(\prod_{i=1}^m \binom{l}{r_i}\right)
\frac{\prod_{i=1}^m (l-r_i)!^2}
{\prod_{i\ne j} a_{ij}!\prod_{i=1}^m (a_{ii}-r_i)!}.
\]
Since
\[
\binom{n}{k}(n-k)! = \frac{n!}{k!},
\qquad
\binom{l}{r_i}(l-r_i)! = \frac{l!}{r_i!},
\qquad
\frac{k!}{\prod_i r_i!}\prod_{i=1}^m \frac{a_{ii}!}{(a_{ii}-r_i)!}
=
k!\prod_{i=1}^m \binom{a_{ii}}{r_i},
\]
this simplifies exactly to
\[
\mathbf P(X(\sigma)=A\mid |S_t|=k)
=
\sum_{r_1+\cdots+r_m=k}
\frac{k!(l!)^m\prod_{i=1}^m(l-r_i)!}{n!\prod_{i\ne j}a_{ij}!}
\prod_{i=1}^m\binom{a_{ii}}{r_i}.
\]
This proves the lemma.
\end{proof}

In the following, we split the proof of \Cref{thm: restate of limiting profile} into the two cases: the fixed $m$ regime and the growing regime where $m=\omega(1)$. %We will write $\mathbf 1:=(1,\ldots,1)$ for the $m$-dimensional vector whose all entries are $1$.

\subsection{Proof for fixed $m$}

In this subsection, we let $m\ge 2$ be a fixed integer. 

% \begin{lemma}%\label{lem: tail bound S_t fixed m}
% Let $n=ml$, such that $m\ge 2$ is fixed and $l=\omega(1)$. Suppose $
% t=\frac n2\left(\log n-\frac12\log l+c\right)$, where $c\in \mathbb R$ is an absolute constant. Sample $S_t\subset[n]$ the same way as in
% \Cref{defi: construction of mu}. Then, for all $|k-\exp(-c)\sqrt{l}|<l^{1/3}$, we have
% \[
% \mathbf P\!\left(\max_{1\le i\le m}\left|R_i-\frac{k}{m}\right|>l^{1/3}\,\middle|\,|S_t|=k\right)=O(l^{-1/6}).
% \]
% \end{lemma}

% \begin{proof}
% Fix $1\le i\le m$. Conditional on the event $|S_t|=k$, the random variable $R_i$ has the
% hypergeometric distribution obtained by choosing $k$ points uniformly from $[n]$, among which
% exactly $l$ belong to $T_{i,l}$. Therefore
% \[
% \mathbb E[R_i\mid |S_t|=k]=k\frac{l}{n}=\frac{k}{m}, \quad\mathrm{Var}(R_i\mid |S_t|=k)
% =
% k\frac{l}{n}\left(1-\frac{l}{n}\right)\frac{n-k}{n-1}
% \le \frac{k}{m}.
% \]
% Thus
% \[
% \mathbf P\!\left(\left|R_i-\frac{|S_t|}{m}\right|>l^{1/3}\,\middle|\,|S_t|=k\right)
% \le
% \frac{\mathrm{Var}(R_i\mid |S_t|=k)}{l^{2/3}}
% \le
% \frac{k}{m\,l^{2/3}}=
% O(l^{-1/6})
% \]
% using the fact $k=O(l^{1/2})$. Since $m$ is fixed, a union bound over $1\le i\le m$ yields
% \[
% \mathbf P\!\left(\max_{1\le i\le m}\left|R_i-\frac{|S_t|}{m}\right|>l^{1/3}\right)=O(l^{-1/6}).
% \]
% This completes the proof.
% \end{proof}

\begin{lemma}\label{lem: tail bound and expectation fixed point fixed m}
Let $n=ml$, such that $m\ge 2$ is fixed and $l=\omega(1)$. Suppose $
t=\frac n2\left(\log n-\frac12\log l+c\right)$, where $c\in \R$ is an absolute constant. Let $\tau\in\S_n$ be distributed with respect to $U_{\S_n}$, and let
$\sigma\in\S_n$ be distributed with respect to $\mu_n^t$. Then, we have
\[
\E|F_{n,l}(\tau)|=l,\quad\mathbf P\!\left(\max_{1\le i\le m}\left|X_{ii}(\tau)-\frac lm\right|>l^{2/3}\right)=O(l^{-1/3}),
\]
and 
\[
\E|F_{n,l}(\sigma)|=l+\Bigl(1-\frac1m\Bigr)\exp(-c)\sqrt l+O(l^{1/3}),\quad\mathbf P\!\left(\max_{1\le i\le m}\left|X_{ii}(\sigma)-\frac lm\right|>l^{2/3}\right)=O(l^{-1/3}).
\]
\end{lemma}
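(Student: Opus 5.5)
The plan is to compute first and second moments of the diagonal counts $X_{ii}$ directly by indicator decompositions and then apply Chebyshev's inequality together with a union bound over the $m$ (fixed) indices.

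\emph{Uniform case.} Write $X_{ii}(\tau)=\sum_{a\in T_{i,l}}\mathbf 1[\tau(a)\in T_{i,l}]$. Each indicator has probability $l/n=1/m$, so $\E X_{ii}(\tau)=l/m$ and $\E|F_{n,l}(\tau)|=\sum_i l/m=l$. For the variance, pairs $a\ne b$ in $T_{i,l}$ contribute $\mathbf P(\tau(a),\tau(b)\in T_{i,l})=\frac{l(l-1)}{n(n-1)}$, which gives $\Var X_{ii}(\tau)=O(l)$ (indeed the hypergeometric variance $\frac lm(1-\frac1m)\frac{n-l}{n-1}$). Chebyshev then yields $\mathbf P(|X_{ii}(\tau)-l/m|>l^{2/3})\le O(l)/l^{4/3}=O(l^{-1/3})$, and a union bound over the fixed number $m$ of indices completes this part.

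\emph{Case of $\mu_n^t$.} Condition on $S_t$ with $|S_t|=k\in[\gamma_t-l^{1/3},\gamma_t+l^{1/3}]$ and $R_i=|S_t\cap T_{i,l}|$. Then $X_{ii}(\sigma)=R_i+Y_i$, where $Y_i$ counts points $a\in T_{i,l}\setminus S_t$ whose image under the uniform permutation $\pi$ of $[n]\setminus S_t$ lies in $T_{i,l}\setminus S_t$. We have
\[
\E[Y_i\mid S_t]=\frac{(l-R_i)^2}{n-k}.
\]
Summing over $i$ and using $\E[R_i\mid k]=k/m$ together with the hypergeometric second moment $\E[R_i^2\mid k]=O(k^2/m^2+k)$, we get
\[
\sum_i\E\left[\frac{(l-R_i)^2}{n-k}\,\middle|\,k\right]=\frac{ml^2-2lk+O(k^2+k)}{n-k}=l-\frac{2k}{m}+\frac{k}{m}+O(k^2/n+1/m)\cdot O(1).
\]
Here $\frac{ml^2}{n-k}=l(1+k/n+O(k^2/n^2))=l+k/m+O(k^2/(ml))$ and $\frac{2lk}{n-k}=\frac{2k}{m}+O(k^2/n)$. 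Adding $\E\sum_iR_i=k$ gives
\[
\E\bigl[|F_{n,l}(\sigma)|\,\big|\,k\bigr]=l+k\Bigl(1-\frac1m\Bigr)+O(1),
\]
since $k=O(\sqrt l)$ makes $k^2/n=O(1)$. Averaging over $k=\gamma_t+O(l^{1/3})=\exp(-c)\sqrt l+O(l^{1/3})$ gives the claimed expectation.

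\emph{Concentration under $\mu_n^t$.} $R_i\le k=O(\sqrt l)\ll l^{2/3}$ deterministically. Moreover $Y_i$, conditional on $S_t$, has mean $\frac{(l-R_i)^2}{n-k}=\frac lm+O(\sqrt l)$ and, by the same pair-indicator computation (or the hypergeometric variance bound) on $\S_{[n]\setminus S_t}$, variance $O(l)$. Thus $|X_{ii}(\sigma)-l/m|\le|Y_i-\E[Y_i\mid S_t]|+O(\sqrt l)$, and Chebyshev with threshold $l^{2/3}/2$ plus a union bound over $i$ gives $O(l^{-1/3})$.

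The main obstacle is purely bookkeeping: verifying that the $O(k^2/n)$ and $O(1/m)$-type corrections in the expectation are absorbed in $O(l^{1/3})$. This holds because $k^2/n=O(l/n)=O(1)$ and the window $|k-\gamma_t|\le l^{1/3}$ is exactly the stated error.
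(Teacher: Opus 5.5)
Your proposal is correct and follows essentially the same route as the paper: hypergeometric moments with Chebyshev and a union bound for $\tau$, and for $\sigma$ the decomposition $X_{ii}(\sigma)=R_i+Y_i$ conditional on $S_t$, with $\E[Y_i\mid S_t]=(l-R_i)^2/(n-k)$ summed to $l+(1-\frac1m)k+O(1)$. The only minor differences are these. The paper uses the deterministic bound $\sum_i r_i^2\le k^2$ for every $(r_1,\dots,r_m)$ instead of averaging $R_i$. You also explicitly absorb the $O(\sqrt l)$ gap between the conditional mean of $X_{ii}(\sigma)$ and $l/m$ before applying Chebyshev, a step the paper leaves implicit.
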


\begin{proof}
We first prove the assertion for \(\tau\). Observe that
\[
\mathbb E[X_{ii}(\tau)]=\frac{l\cdot l}{n}=\frac lm,\quad \mathrm{Var}(X_{ii}(\tau))
=
l\cdot \frac{l}{n}\left(1-\frac{l}{n}\right)\frac{n-l}{n-1}
\le l.
\]
Therefore, we have $\E|F_{n,l}(\tau)|=\sum_{i=1}^m\mathbb E[X_{ii}(\tau)]=l$. Also, by Chebyshev's inequality,
\[
\mathbf P\!\left(\left|X_{ii}(\tau)-\frac lm\right|>l^{2/3}\right)
\le \frac{l}{l^{4/3}}=O(l^{-1/3}).
\]
Since \(m\) is fixed, taking a union bound over \(1\le i\le m\) yields
\[
\mathbf P\!\left(\max_{1\le i\le m}\left|X_{ii}(\tau)-\frac lm\right|>l^{2/3}\right)=O(l^{-1/3}).
\]

We next prove the assertion for \(\sigma\). Fix $k$ and $r_1,\ldots,r_m$ such that
$$
|k-\exp(-c)\sqrt{l}|<l^{1/3},\quad r_1+\cdots+r_m=k.
$$
Condition on \(|S_t|=k\) and \((R_1,\dots,R_m)=(r_1,\ldots,r_m)\). Then we have
\[
X_{ii}(\sigma)=r_i+Y_i,
\]
where \(Y_i=Y_i(\sigma)\) counts how many of the remaining \(l-r_i\) points in \(T_{i,l}\) are sent into the
remaining \(l-r_i\) target positions in \(T_{i,l}\). Since on \([n]\setminus S_t\) the permutation is
uniform, we have
\begin{equation*}%\label{eq: conditional fixed point over k and r}
\mathbb E[X_{ii}(\sigma)\mid |S_t|=k,(R_1,\dots,R_m)=(r_1,\ldots,r_m)]=
r_i+\frac{(l-r_i)^2}{n-k}
\end{equation*}
and
\[
\mathrm{Var}(X_{ii}(\sigma)\mid |S_t|=k,(R_1,\dots,R_m)=(r_1,\ldots,r_m))
=O(l).
\]
Therefore, we have
\begin{align}
\begin{split}
\E[|F_{n,l}(\sigma)||S_t|=k,(R_1,\dots,R_m)=(r_1,\ldots,r_m)]&=\sum_{i=1}^m \left(r_i+\frac{(l-r_i)^2}{n-k}\right)\\
&=k+\frac{nl-2kl}{n-k}+\sum_{i=1}^m\frac{r_i^2}{n-k}\\
&=l+(1-\frac 1m)k+O(1).
\end{split}
\end{align}
Since we always have $|k-\exp(-c)\sqrt{l}|<l^{1/3}$, we deduce that $\E|F_{n,l}(\sigma)|=l+\Bigl(1-\frac1m\Bigr)\exp(-c)\sqrt l+O(l^{1/3})$.
Also, following Chebyshev's inequality, for all $1\le i\le m$,
\begin{equation*}%\label{eq: tail for Xii}
\mathbf P\!\left(
\left|X_{ii}(\sigma)-\frac lm\right|> l^{2/3}
\Bigg| |S_t|=k,(R_1,\dots,R_m)=(r_1,\ldots,r_m)\right)=O\left(\frac{l}{l^{4/3}}\right)=O(l^{-1/3}).
\end{equation*}
Due to the uniformity above, we can remove the condition for $|S_t|$ and $R_1,\ldots,R_m$ and then sum over all $1\le i\le m$, which gives
\[
\mathbf P\!\left(
\max_{1\le i\le m}\left|X_{ii}(\sigma)-\frac lm\right|>l^{2/3}
\right)=O(l^{-1/3}).
\]
This proves the lemma.
\end{proof}

The following lemma gives the central limit theorem for the number of fixed points, which is a specific example of combinatorial central limit theorem from Hoeffding \cite{hoeffding1951combinatorial}.

\begin{lemma}\label{lem: fixed point CLT_fixed m}
Let $n=ml$, such that $m\ge 2$ is fixed and $l=\omega(1)$. Suppose $
t=\frac n2\left(\log n-\frac12\log l+c\right)$, where $c\in \R$ is an absolute constant. Let $\tau\in\S_n$ be distributed with respect to $U_{\S_n}$, and let
$\sigma\in\S_n$ be distributed with respect to $\mu_n^t$. Then, we have
$$
\frac{|F_{n,l}(\tau)|-l}{\sqrt{(1-1/m)l}}
\xrightarrow{d}\mathcal N(0,1),\quad\frac{|F_{n,l}(\sigma)|-l-(1-1/m)\exp(-c)\sqrt l}{\sqrt{(1-1/m)l}}
\xrightarrow{d}\mathcal N(0,1).
$$
\end{lemma}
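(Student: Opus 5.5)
The plan is to write $|F_{n,l}|$ as a Hoeffding combinatorial statistic and apply Hoeffding's combinatorial central limit theorem \cite{hoeffding1951combinatorial}: if $\pi$ is uniform on $\S_N$ and $C=(c(a,b))_{a,b\in[N]}$ is a deterministic array, then $\sum_{a}c(a,\pi(a))$ is asymptotically normal provided
$$\max_{a,b}|d(a,b)|^2\Big/\Bigl(\tfrac1N\sum_{a,b}d(a,b)^2\Bigr)\to 0,$$
where $d(a,b)=c(a,b)-\bar c_{a\cdot}-\bar c_{\cdot b}+\bar c_{\cdot\cdot}$ is the doubly centered array.

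\textbf{The uniform permutation $\tau$.} Take $N=n$ and $c(a,b)=\bbone[a\equiv b\pmod m]$, so that $|F_{n,l}(\tau)|=\sum_a c(a,\tau(a))$. Every row and column of this array has mean $1/m$, hence
$$d(a,b)=\bbone[a\equiv b]-\tfrac1m,\qquad |d(a,b)|\le 1.$$
The sum $\tfrac1n\sum_{a,b}d^2$ equals $n\cdot\tfrac1m(1-\tfrac1m)$, which grows like $l$. So Hoeffding's condition holds as $l\to\infty$. The Hoeffding variance is
$$\frac1{n-1}\sum_{a,b} d^2=\frac{n^2(1-1/m)/m}{n-1}=(1-1/m)\,l\,(1+O(1/n)),$$
and the mean is $l$, as in \Cref{lem: tail bound and expectation fixed point fixed m}. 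This gives the first convergence. The same computation works for $m$ growing, though here $m$ is fixed.

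\textbf{The measure $\sigma\sim\mu_n^t$.} I would condition on $|S_t|=k$ and on $S_t$ itself, with $R_i=r_i$. Then
$$|F_{n,l}(\sigma)|=k+\sum_{a\notin S_t}c(a,\pi(a)),$$
where $\pi$ is uniform on $[n]\setminus S_t$ with $N=n-k$ points and $c$ is the restricted indicator array. Now the row and column means are $(l-r_i)/(n-k)$ instead of exactly $1/m$. Since $k=O(\sqrt l)$, these means are $1/m+O(l^{-1/2})$, so $d$ is still bounded. The variance is
$$(1-1/m)\,l\,\bigl(1+O(l^{-1/2})\bigr),$$
which follows directly from the second moment, or from the explicit Hoeffding variance formula with the perturbed block sizes.

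The conditional mean is $l+(1-\tfrac1m)k+\sum_i r_i^2/(n-k)+O(1)$, as already computed in the proof of \Cref{lem: tail bound and expectation fixed point fixed m}. The term $\sum_i r_i^2/(n-k)$ is $O(k^2/n)=O(1)$.

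Hoeffding's theorem then gives, uniformly over admissible $(k,r)$,
$$\frac{|F_{n,l}(\sigma)|-l-(1-1/m)k}{\sqrt{(1-1/m)l}}\xrightarrow{d}\mathcal N(0,1).$$
To get uniformity I would use the Berry--Esseen version of the combinatorial CLT (Bolthausen), whose error is $O(\max|d|^3/\sqrt{N\cdot\mathrm{var}^3}\cdot N)=O(l^{-1/2})$, uniformly in the conditioning.

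Finally, $|k-\exp(-c)\sqrt l|<l^{1/3}=o(\sqrt l)$ on the support of $\mu_n^t$. So replacing $(1-1/m)k$ by $(1-1/m)\exp(-c)\sqrt l$ changes the normalized statistic by $o(1)$. Averaging the uniform conditional convergence over $(k,S_t)$ then yields the second claim.

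\textbf{Main obstacle.} The main obstacle is making the convergence uniform in the conditioning, with the slightly non-uniform block sizes $l-r_i$. I would first try the Bolthausen bound. If that cannot be quoted cleanly, the fallback is a subsequence argument: conditional characteristic functions converge for any admissible sequence $(k_N,r_N)$, because Hoeffding's condition depends only on $N$ and the variance, both of which are controlled uniformly. Dominated convergence over the mixture then gives the unconditional statement.
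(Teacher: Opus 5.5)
Your proposal is correct and follows essentially the same route as the paper. Both apply Hoeffding's combinatorial CLT to $\tau$ with $d(a,b)=\mathbf 1[a\equiv b]-\frac1m$ and variance $\frac{nl(1-1/m)}{n-1}$. Both then condition on $S_t$ (the paper conditions on $|S_t|=k$ and $(R_1,\dots,R_m)$, which is equivalent) and apply the same theorem to the uniform permutation of the remaining $n-k$ points.

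Your extra step, making the conditional CLT uniform over the conditioning via Bolthausen or a subsequence argument, fills in a point the paper leaves implicit. Note that Bolthausen's bound reads $\frac{C}{N\sigma^3}\sum_{a,b}|d(a,b)|^3\le C\max|d|/\sigma=O(l^{-1/2})$, not the expression you wrote, though your conclusion is unaffected.
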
 

\begin{proof}
We first treat the uniform permutation $\tau$. For $1\le i,j\le n$, define
\[
c(i,j):=\mathbf 1_{\{i\equiv j \,(\mathrm{mod}\ m)\}}.
\]
Then
\[
|F_{n,l}(\tau)|=\sum_{i=1}^n c(i,\tau(i)).
\]
Thus $|F_{n,l}(\tau)|$ is a permutation statistic of the form considered by Hoeffding
\cite[Theorems 2 and 3]{hoeffding1951combinatorial}. For every fixed $i$, there are exactly $l=n/m$ values of $j$ such that $i\equiv j \pmod m$.
Hence
\[
\frac1n\sum_{j=1}^n c(i,j)=\frac1m,
\qquad
\frac1n\sum_{i=1}^n c(i,j)=\frac1m,
\qquad
\frac1{n^2}\sum_{i,j=1}^n c(i,j)=\frac1m.
\]
Therefore the centered array $d(i,j)$ in Hoeffding's notation is simply
\[
|d(i,j)|=\left|c(i,j)-\frac1m\right|\le 1.
\]
By Hoeffding's variance formula \cite[(10)]{hoeffding1951combinatorial},
\[
\Var(|F_{n,l}(\tau)|)
=
\frac1{n-1}\sum_{i,j=1}^n d(i,j)^2=\frac{n\,l(1-1/m)}{n-1}
=
(1-1/m)l+o(l),
\]
and therefore
\[
\frac{\max_{i,j}|d(i,j)|}{\sqrt{\Var(|F_{n,l}(\tau)|)}}\le\frac{1}{\sqrt{\Var(|F_{n,l}(\tau)|)}}=o(1)
\]
since $m$ is fixed and $l=\omega(1)$. Therefore, Hoeffding's combinatorial central limit theorem
\cite[Theorem 3 together with (12)]{hoeffding1951combinatorial} applies, and yields
\[
\frac{|F_{n,l}(\tau)|-l}{\sqrt{(1-1/m)l}}
\xrightarrow{d}\mathcal N(0,1).
\]
Here, we use the fact $\E|F_{n,l}(\tau)|=l$ from \Cref{lem: tail bound and expectation fixed point fixed m}.

We now turn to $\sigma$. Fix $k$ and $r_1,\ldots,r_m$ such that
$$
|k-\exp(-c)\sqrt{l}|<l^{1/3},\quad r_1+\cdots+r_m=k.
$$
Condition on \(|S_t|=k\) and \((R_1,\dots,R_m)=(r_1,\ldots,r_m)\).
Then
\[
|F_{n,l}(\sigma)|=k+Z,
\]
where $Z$ is the number of indices in $[n]\setminus S_t$ whose image under the random
permutation of $[n]\setminus S_t$ stays in the same card type. After relabeling the set
$[n]\setminus S_t$, this is again a permutation statistic of Hoeffding type:
\[
Z=\sum_{x} c_{k,R}(x,\pi(x)),
\]
where $\pi$ is uniform on the remaining $n-k$ points and
\[
c_{k,R}(x,y)=\mathbf 1_{\{\text{$x$ and $y$ belong to the same remaining type block}\}}.
\]
Applying Hoeffding's variance formula again, we have
\[
\Var(Z\mid k,R_1,\dots,R_m)
=
\frac1{n-k-1}\sum_{x,y} d_{k,R}(x,y)^2,
\]
where $d_{k,R}$ is the centered array attached to $c_{k,R}$. A routine computation shows that
\[
\Var(Z\mid k,R_1,\dots,R_m)
=
(1-1/m)l+o(l),
\]
and that
\[
\frac{\max_{x,y}|d_{k,R}(x,y)|}{\sqrt{\Var(Z\mid k,R_1,\dots,R_m)}}=o(1).
\]
Therefore, Hoeffding's theorem \cite[Theorem 3 together with (12)]{hoeffding1951combinatorial}
again yields
\[
\frac{|F_{n,l}(\sigma)|-l-(1-1/m)\exp(-c)\sqrt l}{\sqrt{(1-1/m)l}}
\xrightarrow{d}\mathcal N(0,1).
\]
Here, we use the fact $\E|F_{n,l}(\sigma)|=l+\Bigl(1-\frac1m\Bigr)\exp(-c)\sqrt l+O(l^{1/3})$ from \Cref{lem: tail bound and expectation fixed point fixed m}. This proves the lemma.
\end{proof}

\begin{lemma}\label{lem: concentration of St_fixed m}
Let $a_{11},\ldots,a_{mm},k$ be fixed, such that
$$\frac lm-l^{2/3}\le a_{11},\ldots,a_{mm}\le \frac lm+l^{2/3},$$
$$\exp(-c)\sqrt {l}-l^{1/3}\le k\le\exp(-c)\sqrt {l}+l^{1/3}.$$ 
Then, we have
$$(1-O(l^{-1/6}))\binom{\sum_{i=1}^m a_{ii}}{k}\le\sum_{\substack{r_1+\cdots+r_m=k\\|r_i-k/m|<l^{1/3},\forall i}}\prod_{i=1}^m\binom{a_{ii}}{r_i}\le\binom{\sum_{i=1}^m a_{ii}}{k}.$$
\end{lemma}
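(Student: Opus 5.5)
The plan is to read the sum probabilistically through the Vandermonde identity, recognize it as a multivariate hypergeometric probability, and bound that probability by Chebyshev plus a union bound over the $m$ blocks (with $m$ fixed).

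Set $A:=\sum_{i=1}^m a_{ii}$. Vandermonde's identity gives
\[
\sum_{r_1+\cdots+r_m=k}\prod_{i=1}^m\binom{a_{ii}}{r_i}=\binom{A}{k},
\]
where the sum runs over all nonnegative $(r_1,\ldots,r_m)$ with total $k$. Restricting to the subset of tuples with $|r_i-k/m|<l^{1/3}$ drops only nonnegative terms. This immediately gives the upper bound.

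For the lower bound, divide by $\binom{A}{k}$. The restricted sum over $\binom{A}{k}$ is exactly $\mathbf P(|R_i-k/m|<l^{1/3}\ \forall i)$, where $(R_1,\ldots,R_m)$ is the multivariate hypergeometric vector: draw a uniform $k$-subset from a population of size $A$ split into classes of sizes $a_{11},\ldots,a_{mm}$. So it suffices to show
\[
\mathbf P\Bigl(\max_i|R_i-k/m|\ge l^{1/3}\Bigr)=O(l^{-1/6}).
\]
Since $m$ is fixed, a union bound reduces this to a single coordinate.

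For one coordinate, $R_i$ is hypergeometric with
\[
\E R_i=\frac{k a_{ii}}{A},\qquad \Var R_i\le \frac{k a_{ii}}{A}\le k.
\]
The hypotheses give $a_{ii}=l/m+O(l^{2/3})$ and $A=l+O(l^{2/3})$, hence $a_{ii}/A=1/m+O(l^{-1/3})$. Since $k=O(\sqrt l)$, this yields
\[
\bigl|\E R_i-k/m\bigr|=O(l^{1/2}\cdot l^{-1/3})=O(l^{1/6}),
\]
which is $o(l^{1/3})$. So for large $l$ the event $|R_i-k/m|\ge l^{1/3}$ is contained in $|R_i-\E R_i|\ge \tfrac12 l^{1/3}$. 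Chebyshev then bounds its probability by
\[
\frac{4k}{l^{2/3}}=O\bigl(l^{1/2-2/3}\bigr)=O(l^{-1/6}).
\]
Summing over the $m$ indices gives the lower bound with error $1-O(l^{-1/6})$. For small $l$ the statement holds trivially after enlarging the implicit constant.

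The main obstacle is only bookkeeping: checking that the mean shift caused by $a_{ii}$ deviating from $l/m$ by up to $l^{2/3}$ stays below the window $l^{1/3}$. That works because $k\asymp\sqrt l$ multiplies the relative error $O(l^{-1/3})$.
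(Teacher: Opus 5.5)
Your proposal is correct and follows essentially the same route as the paper: Vandermonde's identity gives the upper bound, and for the lower bound the restricted sum divided by $\binom{A}{k}$ is read as a multivariate hypergeometric probability. Each coordinate is then controlled by checking that the mean shift $k\,|a_{ii}/A-1/m|=O(l^{1/6})$ stays below $\tfrac12 l^{1/3}$, applying Chebyshev with variance at most $k\,a_{ii}/A=O(\sqrt l)$, and taking a union bound over the fixed $m$ blocks.
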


\begin{proof}
By Vandermonde's identity,
\[
\sum_{r_1+\cdots+r_m=k}\prod_{i=1}^m \binom{a_{ii}}{r_i}
=
\binom{\sum_{i=1}^m a_{ii}}{k}.
\]
Therefore the upper bound is immediate. It remains to prove the lower bound. Consider the following experiment: choose uniformly a
\(k\)-element subset of a set of size \(\sum_{i=1}^m a_{ii}\), which is partitioned into \(m\) disjoint
blocks of sizes \(a_{11},\dots,a_{mm}\). Let \(R_i\) denote the number of chosen elements from the
\(i\)-th block. Then \((R_1,\dots,R_m)\) has the multivariate hypergeometric distribution, and for
every \((r_1,\dots,r_m)\in\Z_{\ge0}^m\) with \(r_1+\cdots+r_m=k\),
\[
\mathbf P\bigl((R_1,\dots,R_m)=(r_1,\dots,r_m)\bigr)
=
\frac{\prod_{i=1}^m \binom{a_{ii}}{r_i}}{\binom{\sum_{i=1}^m a_{ii}}{k}}.
\]
Hence
\[
\frac{1}{\binom{\sum_{i=1}^m a_{ii}}{k}}
\sum_{\substack{r_1+\cdots+r_m=k\\ |r_i-k/m|<l^{1/3},\forall i}}
\prod_{i=1}^m \binom{a_{ii}}{r_i}
=
\mathbf P\Bigl(\max_{1\le i\le m}|R_i-k/m|<l^{1/3}\Bigr).
\]
So it suffices to show
\[
\mathbf P\Bigl(\max_{1\le i\le m}|R_i-k/m|\ge l^{1/3}\Bigr)=O(l^{-1/6}).
\]

Fix \(1\le i\le m\). Since \(R_i\) is hypergeometric, we have
\[
\mathbf E[R_i]=k\frac{a_{ii}}{\sum_{j=1}^m a_{jj}},
\qquad
\mathrm{Var}(R_i)
=
k\frac{a_{ii}}{\sum_{j=1}^m a_{jj}}
\left(1-\frac{a_{ii}}{\sum_{j=1}^m a_{jj}}\right)
\frac{\sum_{j=1}^m a_{jj}-k}{\sum_{j=1}^m a_{jj}-1}
\le
k\frac{a_{ii}}{\sum_{j=1}^m a_{jj}}.
\]
Now
\[
a_{ii}=\frac lm+O(l^{2/3}),
\qquad
\sum_{j=1}^m a_{jj}=l+O(l^{2/3}),
\]
so
\[
\frac{a_{ii}}{\sum_{j=1}^m a_{jj}}
=
\frac1m+O(l^{-1/3}).
\]
Since \(k=\exp(-c)\sqrt l+O(l^{1/3})\), it follows that
\[
\mathbf E[R_i]
=
\frac{k}{m}+O(kl^{-1/3})
=
\frac{k}{m}+O(l^{1/6}),
\]
and also
\[
\mathrm{Var}(R_i)\le k\frac{a_{ii}}{\sum_{j=1}^m a_{jj}}=O(\sqrt l).
\]

Therefore, for \(l\) sufficiently large,
\[
\left|\mathbf E[R_i]-\frac{k}{m}\right|\le \frac12 l^{1/3}.
\]
Hence
\[
\left\{\left|R_i-\frac{k}{m}\right|\ge l^{1/3}\right\}
\subseteq
\left\{\left|R_i-\mathbf E[R_i]\right|\ge \frac12 l^{1/3}\right\},
\]
and Chebyshev's inequality gives
\[
\mathbf P\left(\left|R_i-\frac{k}{m}\right|\ge l^{1/3}\right)
\le
\mathbf P\left(\left|R_i-\mathbf E[R_i]\right|\ge \frac12 l^{1/3}\right)
\le
\frac{4\,\mathrm{Var}(R_i)}{l^{2/3}}
=
O(l^{-1/6}).
\]
Since \(m\) is fixed, a union bound yields
\[
\mathbf P\Bigl(\max_{1\le i\le m}|R_i-k/m|\ge l^{1/3}\Bigr)=O(l^{-1/6}).
\]
Combining this with the previous Vandermonde's identity, we conclude that
\[
\sum_{\substack{r_1+\cdots+r_m=k\\ |r_i-k/m|<l^{1/3},\forall i}}
\prod_{i=1}^m \binom{a_{ii}}{r_i}
\ge
(1-O(l^{-1/6}))
\binom{\sum_{i=1}^m a_{ii}}{k},
\]
which proves the lemma.
\end{proof}

\begin{proof}[Proof of \Cref{thm: restate of limiting profile}, fixed $m$ case] 
Let $\tau\in\S_n$ be distributed with respect to $U_{\S_n}$, and let
$\sigma\in\S_n$ be distributed with respect to $\mu_n^t$. By \Cref{prop: transfer to diag}, it suffices to prove
$$d_{TV}(\diag(\sigma),\diag(\tau))=2\Phi\left(\sqrt{1-\frac 1m}\frac{\exp(-c)}{2}\right)-1+o(1).$$
Let $c_0>0$ be another absolute constant, and write
$$k_{c_0,c}:=(1/2+c_0)(1-1/m)\exp(-c)\sqrt l,\quad k_{-c_0,c}:=(1/2-c_0)(1-1/m)\exp(-c)\sqrt l.$$
We claim that the following holds when $l$ is sufficiently large. For all sequence $(a_{11},\ldots,a_{mm})$ such that
$\max_{1\le i\le m}|a_{ii}-l/m|\le l^{2/3}$, one has 
\begin{equation}\label{eq: large fixed point_fixed m}
\sum_{i=1}^m a_{ii}\ge l+k_{c_0,c}\text{ implies }\mathbf{P}(\diag(\sigma)=(a_{11},\ldots,a_{mm}))\ge \mathbf{P}(\diag(\tau)=(a_{11},\ldots,a_{mm})),
\end{equation}
\begin{equation}\label{eq: small fixed point_fixed m}
\sum_{i=1}^m a_{ii}\le l+k_{-c_0,c}\text{ implies }\mathbf{P}(\diag(\sigma)=(a_{11},\ldots,a_{mm}))\le\mathbf{P}(\diag(\tau)=(a_{11},\ldots,a_{mm})).
\end{equation}
We first deal with \eqref{eq: large fixed point_fixed m}. It suffices to prove that for all $|k-\exp(-c)\sqrt{l}|\le l^{1/3}$ and $\sum_{i=1}^m a_{ii}\ge l+k_{c_0,c}$ with $\max_{1\le i\le m}|a_{ii}-l/m|\le l^{2/3}$, one has
$$\mathbf{P}(\diag(\sigma)=(a_{11},\ldots,a_{mm})\mid |S_t|=k)\ge\mathbf{P}(\diag(\tau)=(a_{11},\ldots,a_{mm})).$$
Following the explicit expression given in \Cref{lem: distribution of X} and removing the overlapping terms, this is equivalent to
$$k!\sum_{r_1+\cdots+r_m=k}\prod_{i=1}^m(l)_{r_i}^{-1}\binom{a_{ii}}{r_i}\ge1.$$
Here, $(l)_{r_i}:=l(l-1)\cdots(l-r_i+1)$ denotes the falling factorial. In particular, when $|r_i-k/m|<l^{1/3}$ for all $1\le i\le m$, we have
\begin{align}\label{eq: estimate of falling factorials_fixed m}
\begin{split}
\prod_{i=1}^m (l)_{r_i}&=l^k\prod_{i=1}^m (1-1/l)\cdots(1-(r_i-1)/l)\\
&=l^k\exp\left(-\sum_{i=1}^m\left(1/l+\cdots+(r_i-1)/l+O(l^{-1/2})\right)\right)\\
&=l^k\exp\left(-\frac {\sum_{i=1}^m r_i^2}{2l}+O(l^{-1/2})\right)\\
&=l^k\exp\left(-\frac{k^2}{2ml}+O(l^{-1/3})\right)\\
&=l^k\exp\left(-\exp(-2c)/2m+O(l^{-1/6})\right).
\end{split}
\end{align}
Combining the above estimate with \Cref{lem: concentration of St_fixed m}, we have
\begin{align}
\begin{split}
k!\sum_{r_1+\cdots+r_m=k}\prod_{i=1}^m(l)_{r_i}^{-1}\binom{a_{ii}}{r_i}&\ge k!\sum_{\substack{r_1+\cdots+r_m=k\\|r_i-k/m|<l^{1/3},\forall i}}\prod_{i=1}^m(l)_{r_i}^{-1}\binom{a_{ii}}{r_i}\\
&\ge k!\cdot l^{-k}(1+O(l^{-1/6}))\exp\left(\exp(-2c)/2m\right)\binom{\sum_{i=1}^m a_{ii}}{k}\\
%&=(1+O(l^{-1/6}))l^{-k}\exp\left(\exp(-2c)/2m\right)\left(\sum_{i=1}^m a_{ii}\right)_k\\
&\ge(1+O(l^{-1/6}))\exp\left(\exp(-2c)/2m\right)\cdot\prod_{i=0}^{k-1}\left(1+\frac{k_{c_0,c}-i}{l}\right)\\
&=(1+O(l^{-1/6}))\exp(c_0(1-1/m)\exp(-2c))\\
&\ge 1,
\end{split}
\end{align}
which proves \eqref{eq: large fixed point_fixed m}. 

We next deal with \eqref{eq: small fixed point_fixed m}. It suffices to prove that for all $|k-\exp(-c)\sqrt{l}|\le l^{1/3}$ and $\sum_{i=1}^m a_{ii}\le l+k_{c_0,c}$ with $\max_{1\le i\le m}|a_{ii}-l/m|\le l^{2/3}$, one has
$$\mathbf{P}(\diag(\sigma)=(a_{11},\ldots,a_{mm})\mid |S_t|=k)\le\mathbf{P}(\diag(\tau)=(a_{11},\ldots,a_{mm})).$$
Following the explicit expression given in \Cref{lem: distribution of X} and removing the overlapping terms, this is equivalent to
$$k!\sum_{r_1+\cdots+r_m=k}\prod_{i=1}^m(l)_{r_i}^{-1}\binom{a_{ii}}{r_i}\le 1.$$
Observe the order of magnitudes
$$\prod_{i=1}^m(l)_{r_i}^{-1}\le(l)_k^{-1}=O(l^{-k}),\quad k!\sum_{r_1+\cdots+r_m=k}\binom{a_{ii}}{r_i}=\left(\sum_{i=1}^m a_{ii}\right)_k=O(l^k).$$
Therefore, by \Cref{lem: concentration of St_fixed m}, the contribution from the tail part (i.e., $\max_{1\le i\le m}|r_i-k/m|\ge l^{1/3}$) is $O(l^{-1/6})$. Thus, following the estimate in \eqref{eq: estimate of falling factorials_fixed m}, we have
\begin{align}
\begin{split}
k!\sum_{r_1+\cdots+r_m=k}\prod_{i=1}^m(l)_{r_i}^{-1}\binom{a_{ii}}{r_i}&= k!\sum_{\substack{r_1+\cdots+r_m=k\\|r_i-k/m|<l^{1/3},\forall i}}\prod_{i=1}^m(l)_{r_i}^{-1}\binom{a_{ii}}{r_i}+O(l^{-1/6})\\
&\le k!\cdot l^{-k}\exp\left(\exp(-2c)/2m\right)\binom{\sum_{i=1}^m a_{ii}}{k}+O(l^{-1/6})\\
&\le\exp\left(\exp(-2c)/2m\right)\cdot\prod_{i=0}^{k-1}\left(1+\frac{k_{-c_0,c}-i}{l}\right)+O(l^{-1/6})\\
&=(1+O(l^{-1/6}))\exp(-c_0(1-1/m)\exp(-2c))+O(l^{-1/6})\\
&\le 1,
\end{split}
\end{align}
which proves \eqref{eq: small fixed point_fixed m}. 

Based on our preparation \eqref{eq: large fixed point_fixed m} and \eqref{eq: small fixed point_fixed m}, we now decompose $d_{TV}(\diag(\sigma),\diag(\tau))$ as the summation of three regimes:
\begin{align}\label{eq: decompose dTV into three regime_fixed m}
\begin{split}
&d_{TV}(\diag(\sigma),\diag(\tau))\\
&=\frac 12\sum_{a_{11},\ldots,a_{mm}}|\mathbf{P}(\diag(\sigma)=(a_{11},\ldots,a_{mm}))-\mathbf{P}(\diag(\tau)=(a_{11},\ldots,a_{mm}))|\\
&=\frac 12\sum_{\sum_{i=1}^m a_{ii}\ge l+k_{c_0,c}}|\mathbf{P}(\diag(\sigma)=(a_{11},\ldots,a_{mm}))-\mathbf{P}(\diag(\tau)=(a_{11},\ldots,a_{mm}))|\\
&+\frac 12\sum_{\sum_{i=1}^m a_{ii}\le l+k_{-c_0,c}}|\mathbf{P}(\diag(\sigma)=(a_{11},\ldots,a_{mm}))-\mathbf{P}(\diag(\tau)=(a_{11},\ldots,a_{mm}))|\\
&+\frac 12\sum_{l+k_{-c_0,c}<\sum_{i=1}^m a_{ii}<l+k_{c_0,c}}|\mathbf{P}(\diag(\sigma)=(a_{11},\ldots,a_{mm}))-\mathbf{P}(\diag(\tau)=(a_{11},\ldots,a_{mm}))|.
\end{split}
\end{align}
For the regime $\sum_{i=1}^m a_{ii}\ge l+k_{c_0,c}$, combining \eqref{eq: large fixed point_fixed m} with the bound of the probability of the tail event $\max_{1\le i\le m}|X_{ii}-l/m|\ge l^{2/3}$ in \Cref{lem: tail bound and expectation fixed point fixed m}, we have
\begin{align}\label{eq: large fixed point dTV_fixed m}
\begin{split}
&\sum_{\sum_{i=1}^m a_{ii}\ge l+k_{c_0,c}}|\mathbf{P}(\diag(\sigma)=(a_{11},\ldots,a_{mm}))-\mathbf{P}(\diag(\tau)=(a_{11},\ldots,a_{mm}))|\\
&=\mathbf{P}(|F_{n,l}(\sigma)|\ge l+k_{c_0,c})-\mathbf{P}(|F_{n,l}(\tau)|\ge l+k_{c_0,c})+o(1)\\
&=1-\Phi\left(-(1-c_0)\sqrt{1-\frac 1m}\frac{\exp(-c)}{2}\right)-1+\Phi\left((1+c_0)\sqrt{1-\frac 1m}\frac{\exp(-c)}{2}\right)+o(1)\\
&=\Phi\left((1-c_0)\sqrt{1-\frac 1m}\frac{\exp(-c)}{2}\right)+\Phi\left((1+c_0)\sqrt{1-\frac 1m}\frac{\exp(-c)}{2}\right)-1+o(1).
\end{split}
\end{align}
Here, the third line comes from the central limit theorem provided in \Cref{lem: fixed point CLT_fixed m}. For the regime $\sum_{i=1}^m a_{ii}\le l+k_{-c_0,c}$, combining \eqref{eq: small fixed point_fixed m} with the bound of the probability of the tail event $\max_{1\le i\le m}|X_{ii}-l/m|\ge l^{2/3}$ in \Cref{lem: tail bound and expectation fixed point fixed m}, we have
\begin{align}\label{eq: small fixed point dTV_fixed m}
\begin{split}
&\sum_{\sum_{i=1}^m a_{ii}\le l+k_{-c_0,c}}|\mathbf{P}(\diag(\sigma)=(a_{11},\ldots,a_{mm}))-\mathbf{P}(\diag(\tau)=(a_{11},\ldots,a_{mm}))|\\
&=\mathbf{P}(|F_{n,l}(\tau)|\le l+k_{-c_0,c})-\mathbf{P}(|F_{n,l}(\sigma)|\le l+k_{-c_0,c})+o(1)\\
&=\Phi\left((1-c_0)\sqrt{1-\frac 1m}\frac{\exp(-c)}{2}\right)+\Phi\left((1+c_0)\sqrt{1-\frac 1m}\frac{\exp(-c)}{2}\right)-1+o(1).
\end{split}
\end{align}
Here, the third line comes from \Cref{lem: fixed point CLT_fixed m}. For the regime $l+k_{-c_0,c}<\sum_{i=1}^m a_{ii}<l+k_{c_0,c}$, we use the crude bound
\begin{align}\label{eq: middle fixed point dTV_fixed m}
\begin{split}
&\sum_{l+k_{-c_0,c}<\sum_{i=1}^m a_{ii}<l+k_{c_0,c}}|\mathbf{P}(\diag(\sigma)=(a_{11},\ldots,a_{mm}))-\mathbf{P}(\diag(\tau)=(a_{11},\ldots,a_{mm}))|\\
&\le\mathbf{P}(l+k_{-c_0,c}<|F_{n,l}(\sigma)|< l+k_{c_0,c})+\mathbf{P}(l+k_{-c_0,c}<|F_{n,l}(\tau)|< l+k_{c_0,c})\\
&=2\Phi\left((1+c_0)\sqrt{1-\frac 1m}\frac{\exp(-c)}{2}\right)-2\Phi\left((1-c_0)\sqrt{1-\frac 1m}\frac{\exp(-c)}{2}\right)+o(1).
\end{split}
\end{align}
Here, the third line comes from \Cref{lem: fixed point CLT_fixed m}. The estimate \eqref{eq: large fixed point dTV_fixed m}, \eqref{eq: small fixed point dTV_fixed m}, \eqref{eq: middle fixed point dTV_fixed m} and the decomposition \eqref{eq: decompose dTV into three regime_fixed m} together imply
\begin{multline}
\left|d_{TV}(\diag(\sigma),\diag(\tau))-
\left(\Phi\left((1-c_0)\sqrt{1-\frac 1m}\frac{\exp(-c)}{2}\right)+\Phi\left((1+c_0)\sqrt{1-\frac 1m}\frac{\exp(-c)}{2}\right)-1\right)\right|\\
\le2\Phi\left((1+c_0)\sqrt{1-\frac 1m}\frac{\exp(-c)}{2}\right)-2\Phi\left((1-c_0)\sqrt{1-\frac 1m}\frac{\exp(-c)}{2}\right)+o(1).
\end{multline}
Since the above holds for arbitrary $c_0>0$, taking $c_0\rightarrow 0$ completes the proof.
\end{proof}

\subsection{Proof for $m=\omega(1)$}

In this subsection, we let $m=\omega(1)$ goes to infinity. The following lemma gives a tail bound for \(\diag(\cdot)\) under \(U_{\S_n}\) and \(\mu_n^t\). In particular, it shows that \(\sum_{i=1}^m X_{ii}^2\) is, with high probability, much smaller than \(l^2\).

\begin{lemma}\label{lem: tail bound square sum_growing m}
Let \(n=ml\), such that \(m=\omega(1)\) and \(l=\omega(1)\). Suppose
\[
t=\frac n2\left(\log n-\frac12\log l+c\right),
\]
where \(c\in\mathbb R\) is an absolute constant. Let \(\tau\in \S_n\) be distributed
with respect to \(U_{\S_n}\), and let \(\sigma\in \S_n\) be distributed with respect to \(\mu_n^t\).
Then, we have
\[
\mathbf P\left(\sum_{i=1}^m X_{ii}(\tau)^2>l^2\sqrt{\frac 1m+\frac 1l}\right)=O\left(
\sqrt{\frac1m+\frac1l}\right),
\]
and
\[
\mathbf P\left(\sum_{i=1}^m X_{ii}(\sigma)^2>l^2\sqrt{\frac 1m+\frac 1l}\right)=O\left(
\sqrt{\frac1m+\frac1l}\right).
\]
\end{lemma}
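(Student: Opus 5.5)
The plan is to bound $\E\sum_{i=1}^m X_{ii}^2$ by $O(l^2(\frac1m+\frac1l))$ for both $\tau$ and $\sigma$, and then apply Markov's inequality with threshold $l^2\sqrt{\frac1m+\frac1l}$. This gives a probability bound of $O\bigl((\frac1m+\frac1l)/\sqrt{\frac1m+\frac1l}\bigr)=O\bigl(\sqrt{\frac1m+\frac1l}\bigr)$, which is exactly the claimed rate. So everything reduces to second-moment computations for the diagonal counts.

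For $\tau$ uniform, $X_{ii}(\tau)$ is hypergeometric: $l$ draws from a population of $n$ containing $l$ successes. As in the proof of \Cref{lem: tail bound and expectation fixed point fixed m}, $\E X_{ii}(\tau)=l/m$ and $\Var X_{ii}(\tau)\le l/m$. Hence
\[
\E X_{ii}(\tau)^2\le \frac{l^2}{m^2}+\frac lm,
\qquad
\E\sum_{i=1}^m X_{ii}(\tau)^2\le \frac{l^2}{m}+l=l^2\Bigl(\frac1m+\frac1l\Bigr).
\]
Markov's inequality then gives the first claim.

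For $\sigma\sim\mu_n^t$, condition on $|S_t|=k$ and $(R_1,\ldots,R_m)=(r_1,\ldots,r_m)$, with $|k-\exp(-c)\sqrt l|\le l^{1/3}$. As in \Cref{lem: tail bound and expectation fixed point fixed m}, write $X_{ii}(\sigma)=r_i+Y_i$. Here $Y_i$ is hypergeometric: $l-r_i$ draws from $n-k$ points with $l-r_i$ successes. Therefore
\[
\E[Y_i\mid\cdot]=\frac{(l-r_i)^2}{n-k}\le\frac{2l}{m},
\qquad
\Var(Y_i\mid\cdot)\le \frac{2l}{m},
\]
using $n-k\ge n/2$. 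This gives
\[
\E[X_{ii}^2\mid\cdot]\le 2r_i^2+2\E[Y_i^2\mid\cdot]\le 2r_i^2+O\Bigl(\frac{l^2}{m^2}+\frac lm\Bigr).
\]
Summing over $i$, the $Y$-part contributes $O(l^2/m+l)$. For the fixed-point part, use the crude bound $\sum_i r_i^2\le(\sum_i r_i)^2=k^2=O(l)$. Hence $\E[\sum_i X_{ii}(\sigma)^2\mid\cdot]=O(l^2(\frac1m+\frac1l))$ uniformly in the conditioning. Averaging over $k$ and $R$ gives the same bound unconditionally, and Markov's inequality concludes.

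The only step needing care is controlling the fixed-point contribution $\sum_i r_i^2$, together with the uniformity of the conditional bounds over the truncated range of $k$ built into \Cref{defi: construction of mu}. The bound $k^2\le (\sqrt l+l^{1/3})^2 e^{\max(0,-2c)}$ disposes of the first, since it is absorbed into the $O(l)$ term. Everything else is a standard hypergeometric variance estimate, so I do not expect a real obstacle.
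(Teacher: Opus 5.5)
Your proposal is correct and follows the paper's proof essentially line by line. It uses the same hypergeometric second-moment bound for $\tau$, the same decomposition $X_{ii}(\sigma)=R_i+Y_i$ with $\sum_i R_i^2\le |S_t|^2=O(l)$ and $n-|S_t|\ge n/2$ controlling the $Y_i$-part, and then Markov's inequality; conditioning on $(|S_t|,R_1,\ldots,R_m)$ instead of on $S_t$ itself is an inessential difference.
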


\begin{proof}
We first treat \(\tau\). For each \(1\le i\le m\), the random variable \(X_{ii}(\tau)\) has the
hypergeometric distribution corresponding to choosing \(l\) positions out of \(n=ml\), among
which exactly \(l\) belong to \(T_{i,l}\). Hence
\[
\mathbf E[X_{ii}(\tau)]=\frac{l^2}{n}=\frac lm,
\qquad
\mathrm{Var}(X_{ii}(\tau))
=
l\cdot \frac ln\left(1-\frac ln\right)\frac{n-l}{n-1}
\le \frac lm.
\]
Therefore
\[
\mathbf E[X_{ii}(\tau)^2]
=
\mathrm{Var}(X_{ii}(\tau))+\mathbf E[X_{ii}(\tau)]^2
\le
\frac lm+\frac{l^2}{m^2}.
\]
Summing over \(1\le i\le m\), we obtain
\[
\mathbf E\left[\sum_{i=1}^m X_{ii}(\tau)^2\right]
\le
l+\frac{l^2}{m}
=
l^2\left(\frac1m+\frac1l\right).
\]
Now Markov's inequality gives
\[
\mathbf P\left(\sum_{i=1}^m X_{ii}(\tau)^2>l^2\sqrt{\frac1m+\frac1l}\right)
\le
\frac{\mathbf E[\sum_{i=1}^m X_{ii}(\tau)^2]}{l^2\sqrt{\frac1m+\frac1l}}=O\left(
\sqrt{\frac1m+\frac1l}\right).
\]

We next treat \(\sigma\). By the definition of \(\mu_n^t\), we always have
\[
\bigl||S_t|-\exp(-c)\sqrt l\bigr|\le l^{1/3}.
\]
Condition on the set \(S_t\), we write $
R_i:=|S_t\cap T_{i,l}|$ for all $1\le i\le m$. Then
\[
X_{ii}(\sigma)=R_i+Y_i,
\]
where \(Y_i\) is the number of points in \(T_{i,l}\setminus S_t\) that are mapped by the uniform
permutation on \([n]\setminus S_t\) back into \(T_{i,l}\setminus S_t\). Conditional on \(S_t\), the
random variable \(Y_i\) is hypergeometric, so
\[
\mathbf E[Y_i\mid S_t]
=
\frac{(l-R_i)^2}{n-|S_t|},
\qquad
\mathrm{Var}(Y_i\mid S_t)\le \frac{(l-R_i)^2}{n-|S_t|}.
\]
Hence
\begin{align}
\begin{split}
\mathbf E[X_{ii}(\sigma)^2\mid S_t]
&\le
2R_i^2+2\mathbf E[Y_i^2\mid S_t]\\
&=
2R_i^2+2\mathrm{Var}(Y_i\mid S_t)+2\mathbf E[Y_i\mid S_t]^2\\
&\le
2R_i^2+2\frac{(l-R_i)^2}{n-|S_t|}+2\left(\frac{(l-R_i)^2}{n-|S_t|}\right)^2.
\end{split}
\end{align}
Summing over \(1\le i\le m\), we get
\begin{align}
\begin{split}
\mathbf E\left[\sum_{i=1}^m X_{ii}(\sigma)^2\ \middle|\ S_t\right]
&\le
2\sum_{i=1}^m R_i^2+2\sum_{i=1}^m\frac{(l-R_i)^2}{n-|S_t|}+2\sum_{i=1}^m\left(\frac{(l-R_i)^2}{n-|S_t|}\right)^2\\
&\le 2|S_t|^2+2\frac{ml^2}{n/2}+2\frac{ml^4}{(n/2)^2}\\
&=O\left(l^2\left(\frac 1m+\frac 1l\right)\right).
\end{split}
\end{align}
Therefore, we can remove the condition for $S_t$ and directly write $\mathbf E\left[\sum_{i=1}^m X_{ii}(\sigma)^2\right]=O\left(l^2\left(\frac 1m+\frac 1l\right)\right)$. Now, Markov's inequality yields
\[
\mathbf P\left(\sum_{i=1}^m X_{ii}(\sigma)^2>l^2\sqrt{\frac1m+\frac1l}\right)
\le
\frac{\mathbf E[\sum_{i=1}^m X_{ii}(\tau)^2]}{l^2\sqrt{\frac1m+\frac1l}}=O\left(
\sqrt{\frac1m+\frac1l}\right).
\]
This proves the lemma.
\end{proof}

The following lemma is the analogue of \Cref{lem: fixed point CLT_fixed m} in the regime \(m=\omega(1)\).

\begin{lemma}\label{lem: fixed point CLT_growing m}
Let $n=ml$, such that $m=\omega(1)$ and $l=\omega(1)$. Suppose $
t=\frac n2\left(\log n-\frac12\log l+c\right)$, where $c\in \R$ is an absolute constant. Let $\tau\in\S_n$ be distributed with respect to $U_{\S_n}$, and let
$\sigma\in\S_n$ be distributed with respect to $\mu_n^t$. Then, we have
$$
\frac{|F_{n,l}(\tau)|-l}{\sqrt{l}}
\xrightarrow{d}\mathcal N(0,1),\quad\frac{|F_{n,l}(\sigma)|-l-\exp(-c)\sqrt l}{\sqrt{l}}
\xrightarrow{d}\mathcal N(0,1).
$$
\end{lemma}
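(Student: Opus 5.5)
We follow the proof of \Cref{lem: fixed point CLT_growing m}'s fixed-$m$ analogue, \Cref{lem: fixed point CLT_fixed m}, again using Hoeffding's combinatorial central limit theorem. The only changes are the variance, now $l(1+o(1))$ instead of $(1-1/m)l$, and the mean shift, now $\exp(-c)\sqrt l$.

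For the uniform permutation $\tau$, we write $|F_{n,l}(\tau)|=\sum_{i=1}^n c(i,\tau(i))$ with $c(i,j)=\mathbf 1_{\{i\equiv j\ (\mathrm{mod}\ m)\}}$. The centered array is $d(i,j)=c(i,j)-1/m$, so $|d(i,j)|\le 1$. Hoeffding's variance formula gives
\[
\Var(|F_{n,l}(\tau)|)=\frac{1}{n-1}\sum_{i,j}d(i,j)^2=\frac{n\,l(1-1/m)}{n-1}=l(1+o(1)),
\]
since $m=\omega(1)$. The mean is $l$, because each $X_{ii}(\tau)$ has mean $l/m$. The Lindeberg-type condition $\max|d|/\sqrt{\Var}\le l^{-1/2}(1+o(1))\to 0$ holds, so \cite[Theorem 3]{hoeffding1951combinatorial} yields $(|F_{n,l}(\tau)|-l)/\sqrt l\xrightarrow{d}\mathcal N(0,1)$.

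For $\sigma$, we condition on $|S_t|=k$ with $|k-\exp(-c)\sqrt l|\le l^{1/3}$ and on $(R_1,\dots,R_m)=(r_1,\dots,r_m)$. Then $|F_{n,l}(\sigma)|=k+Z$, where $Z=\sum_x c_{k,R}(x,\pi(x))$, $\pi$ is uniform on the $n-k$ remaining points, and the blocks have sizes $l-r_i$. We compute the conditional mean and variance as follows.

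\begin{itemize}
\item[(i)] The conditional mean is $\E[Z\mid k,R]=\sum_i (l-r_i)^2/(n-k)=l-2k/m+\sum_i r_i^2/(n-k)+O(k/m)$. Since $\sum_i r_i^2\le k^2=O(l)$ and $n-k\ge n/2$, this equals $l+O(\sqrt l/m+1/m)$. Hence $\E[|F_{n,l}(\sigma)|\mid k,R]=l+k+o(\sqrt l)=l+\exp(-c)\sqrt l+o(\sqrt l)$, uniformly over the conditioning. This is where $m=\omega(1)$ kills the factor $(1-1/m)$.
\item[(ii)] For the variance, Hoeffding's formula gives $\frac{1}{n-k-1}\sum_{x,y}d_{k,R}(x,y)^2$. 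The centered array is $d_{k,R}(x,y)=c_{k,R}(x,y)-b_x/N-b_y/N+\sum_i b_i^2/N^2$, where $N=n-k$ and $b_x$ is the size of the block containing $x$. Expanding gives $\sum d^2=\sum_i b_i^2-\left(\sum_i b_i^2\right)^2/N^2\cdot(\ldots)$. A cleaner route is to note that the variance equals $\sum_i b_i^2/N\cdot(1+O(\max_i b_i/N))$ up to lower order. Since $b_i=l-r_i$ with $r_i\le k=O(\sqrt l)$, we get $\sum_i b_i^2/N=l(1+o(1))$ and $\max_i b_i/N\le 1/m\to 0$, so the conditional variance is $l(1+o(1))$ uniformly. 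I will do this expansion carefully; it is routine but must be uniform in $(k,R)$.
\item[(iii)] $\max|d_{k,R}|\le 2$, so the ratio $\max|d_{k,R}|/\sqrt{\Var}\to 0$.
\end{itemize}

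Hoeffding's theorem then gives, for each admissible conditioning, $(Z-\E[Z\mid k,R])/\sqrt l\xrightarrow{d}\mathcal N(0,1)$. The main obstacle is passing from this conditional convergence to unconditional convergence, since Hoeffding's theorem is stated along a single sequence rather than uniformly over a family. I would handle this by contradiction and subsequences. If the unconditional distribution functions failed to converge at some point $x$, one could pick, for each $N$, a conditioning $(k_N,R_N)$ that is nearly worst at $x$. The resulting sequence of arrays still satisfies Hoeffding's hypotheses by (ii) and (iii), so along it the conditional distribution functions converge to $\Phi(x)$, which is a contradiction. Since the centering shifts in (i) are $o(\sqrt l)$ uniformly, the mixture over $(k,R)$ converges, and $(|F_{n,l}(\sigma)|-l-\exp(-c)\sqrt l)/\sqrt l\xrightarrow{d}\mathcal N(0,1)$.
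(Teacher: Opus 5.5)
Your proposal is correct and follows essentially the same route as the paper: Hoeffding's combinatorial CLT for $\tau$; for $\sigma$, conditioning on $|S_t|=k$ and $(R_1,\dots,R_m)$, writing $|F_{n,l}(\sigma)|=k+Z$ with $Z$ a Hoeffding statistic, and using $m=\omega(1)$ to turn $1-\frac1m$ into $1+o(1)$ in both the mean shift and the variance. Your uniform conditional variance bound (with $S_p:=\sum_i b_i^p$, one has exactly $\sum_{x,y}d_{k,R}^2=S_2-2S_3/N+S_2^2/N^2$) and your subsequence argument for de-conditioning supply details the paper leaves implicit; the only slip is that $\max_i b_i/N\le l/(n-k)$ rather than $\le 1/m$, which is harmless.
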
 

\begin{proof}
For the uniform permutation \(\tau\), exactly as before, we have
\[
|F_{n,l}(\tau)|=\sum_{i=1}^n c(i,\tau(i)),
\qquad
c(i,j):=\mathbf 1_{i\equiv j\ (\mathrm{mod}\ m)}.
\]
Hence Hoeffding's theorem applies once we know the variance asymptotics. The same computation
as in the fixed \(m\) case gives
\[
\mathrm{Var}(|F_{n,l}(\tau)|)=\Bigl(1-\frac1m\Bigr)l+o(l)=l+o(l)
\]
since now \(m=\omega(1)\). Thus the same combinatorial central limit theorem yields
\[
\frac{|F_{n,l}(\tau)|-l}{\sqrt l}\xrightarrow{d}\mathcal N(0,1).
\]

For \(\sigma\), we again condition on \(|S_t|=k\) and \((R_1,\ldots,R_m)=(r_1,\ldots,r_m)\). Exactly as in \Cref{lem: fixed point CLT_fixed m}, one writes
\[
|F_{n,l}(\sigma)|=k+Z,
\]
where \(Z\) is a Hoeffding-type permutation statistic on the remaining \(n-k\) points. The same
argument as before gives
\[
\mathbb E|F_{n,l}(\sigma)|
=
l+\Bigl(1-\frac1m\Bigr)\exp(-c)\sqrt l+O(l^{1/3})=l+\exp(-c)\sqrt l+o(l^{1/2}),
\]
which follows \Cref{lem: tail bound and expectation fixed point fixed m} and
\[
\mathrm{Var}(Z\mid k,R_1,\ldots,R_m)=\Bigl(1-\frac1m\Bigr)l+o(l)=l+o(l).
\]
Here, because \(m=\omega(1)\), the factor \(1-\frac1m\) can be replaced by \(1+o(1)\). Hence the
same conditional Hoeffding central limit theorem implies
\[
\frac{|F_{n,l}(\sigma)|-l-\exp(-c)\sqrt l}{\sqrt l}\xrightarrow{d}\mathcal N(0,1).
\]

This proves the lemma.
\end{proof}

\begin{lemma}\label{lem: concentration of St_growing m}
Let \(a_{11},\dots,a_{mm},k\) be fixed such that
\[
\left|\sum_{i=1}^m a_{ii}-l\right|\le l^{2/3},
\qquad
\sum_{i=1}^m a_{ii}^2\le l^2\sqrt{\frac1m+\frac1l},
\qquad
\left|k-\exp(-c)\sqrt l\right|\le l^{1/3}.
\]
Then, we have
\[
\left(1-O\left(\left(\frac1m+\frac1l\right)^{1/4}\right)\right)
\binom{\sum_{i=1}^m a_{ii}}{k}\le\sum_{\substack{r_1+\cdots+r_m=k\\ \sum_{i=1}^m r_i^2\le
l\left(\frac1m+\frac1l\right)^{1/4}}}
\prod_{i=1}^m \binom{a_{ii}}{r_i}
\le\binom{\sum_{i=1}^m a_{ii}}{k}.
\]
\end{lemma}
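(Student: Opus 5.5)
The plan mirrors the proof of \Cref{lem: concentration of St_fixed m}. The upper bound is immediate from Vandermonde's identity, since the restricted sum is a subsum of
\[
\sum_{r_1+\cdots+r_m=k}\prod_{i=1}^m\binom{a_{ii}}{r_i}=\binom{\sum_{i=1}^m a_{ii}}{k}.
\]
For the lower bound we interpret the ratio probabilistically. Write $A:=\sum_{i=1}^m a_{ii}$ and choose a uniform $k$-subset of a set of size $A$ partitioned into blocks of sizes $a_{11},\ldots,a_{mm}$. Let $(R_1,\ldots,R_m)$ be the multivariate hypergeometric occupation counts. Then the restricted sum divided by $\binom{A}{k}$ equals
\[
\mathbf P\Bigl(\sum_{i=1}^m R_i^2\le l\,\epsilon\Bigr),\qquad \epsilon:=\Bigl(\frac1m+\frac1l\Bigr)^{1/4}.
\]
So it suffices to show $\mathbf P(\sum_i R_i^2>l\epsilon)=O(\epsilon)$.

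Since a union bound over $m=\omega(1)$ blocks is no longer available, we control $\sum_i R_i^2$ in expectation and apply Markov's inequality. For each $i$ we have
\[
\mathbf E[R_i]=k\frac{a_{ii}}{A},\qquad \mathrm{Var}(R_i)\le k\frac{a_{ii}}{A},
\]
so
\[
\mathbf E[R_i^2]\le k\frac{a_{ii}}{A}+k^2\frac{a_{ii}^2}{A^2}.
\]
Summing over $i$ gives
\[
\mathbf E\Bigl[\sum_i R_i^2\Bigr]\le k+\frac{k^2}{A^2}\sum_i a_{ii}^2.
\]

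We now insert the hypotheses. From $|A-l|\le l^{2/3}$ we get $A=l(1+o(1))$. From $|k-\exp(-c)\sqrt l|\le l^{1/3}$ we get $k=O(\sqrt l)$, hence $k^2=O(l)$. Finally $\sum_i a_{ii}^2\le l^2\epsilon^2$. Together these give
\[
\mathbf E\Bigl[\sum_i R_i^2\Bigr]\le O(\sqrt l)+O\bigl(l\epsilon^2\bigr).
\]

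Markov's inequality at threshold $l\epsilon$ then yields
\[
\mathbf P\Bigl(\sum_i R_i^2>l\epsilon\Bigr)=O\bigl(l^{-1/2}\epsilon^{-1}\bigr)+O(\epsilon).
\]
The first term is $O(\epsilon)$, because $l^{-1/2}\le(1/l)^{1/2}\le \epsilon^2$. So the total is $O(\epsilon)$, which gives the claimed $1-O((1/m+1/l)^{1/4})$ lower bound.

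The main point to check is this exponent bookkeeping. The $\sqrt{1/m+1/l}$ hypothesis on $\sum a_{ii}^2$ must combine with $k^2=O(l)$ to leave exactly one power of $\epsilon$ after Markov, and the diagonal-variance term $k=O(\sqrt l)$ must also be absorbed. Both work as computed above.
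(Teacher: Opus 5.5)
Your proposal is correct and follows essentially the same route as the paper: Vandermonde's identity for the upper bound, the multivariate hypergeometric interpretation, a first-moment bound $\mathbf E[\sum_i R_i^2]\le k+k^2\sum_i a_{ii}^2/A^2$ (with $A=\sum_i a_{ii}$), and Markov's inequality at threshold $l(1/m+1/l)^{1/4}$ with the same exponent bookkeeping. The only cosmetic difference is that you bound $\mathbf E[R_i^2]$ by variance plus squared mean, while the paper uses the factorial moment $\mathbf E[R_i(R_i-1)]\le k(k-1)a_{ii}^2/A^2$; both give the same estimate.
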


\begin{proof}
By Vandermonde's identity,
\[
\sum_{r_1+\cdots+r_m=k}\prod_{i=1}^m \binom{a_{ii}}{r_i}
=
\binom{\sum_{i=1}^ma_{ii}}{k}.
\]
Hence the upper bound is immediate. It remains to prove the lower bound. Considering the same experiment as in the proof of \Cref{lem: concentration of St_fixed m}: choose uniformly a
\(k\)-element subset of a set of size \(\sum_{i=1}^m a_{ii}\), which is partitioned into \(m\) disjoint
blocks of sizes \(a_{11},\dots,a_{mm}\). Again, we let \(R_i\) denote the number of chosen elements from the
\(i\)-th block. For each \(1\le i\le m\), since \(R_i\) is hypergeometric, we have
\[
\mathbb E[R_i]=k\frac{a_{ii}}{\sum_{i=1}^m a_{ii}},
\qquad
\mathbb E[R_i(R_i-1)]
=
k(k-1)\frac{a_{ii}(a_{ii}-1)}{\sum_{i=1}^m a_{ii}(\sum_{i=1}^m a_{ii}-1)}\le k(k-1)\frac{a_{ii}^2}{(\sum_{i=1}^m a_{ii})^2}.
\]
Therefore
\begin{align}
\begin{split}
\mathbb E\left[\sum_{i=1}^m R_i^2\right]
&=
\sum_{i=1}^m \mathbb E[R_i(R_i-1)]
+
\sum_{i=1}^m \mathbb E[R_i]\\
&=
k(k-1)\frac{\sum_{i=1}^m a_{ii}^2}{(\sum_{i=1}^m a_{ii})^2}+k\\
&\le
(\exp(-2c)+o(1))\,l\sqrt{\frac1m+\frac1l}
+\exp(-c)\sqrt l+O(l^{1/3}).
\end{split}
\end{align}
Applying Markov's inequality,
\begin{align}
\begin{split}
\mathbf P\left(\sum_{i=1}^m R_i^2>
l\left(\frac1m+\frac1l\right)^{1/4}\right)
&\le
\frac{\mathbb E[\sum_{i=1}^m R_i^2]}
{l\left(\frac1m+\frac1l\right)^{1/4}}\\
&\le (\exp(-2c)+o(1))
\left(\frac1m+\frac1l\right)^{1/4}
+
\frac{\exp(-c)+o(1)}{\sqrt l\,\left(\frac1m+\frac1l\right)^{1/4}}\\
&=O\left(\left(\frac1m+\frac1l\right)^{1/4}\right).
\end{split}
\end{align}
Combining this with the previous Vandermonde's identity, we conclude that
\[
\sum_{\substack{r_1+\cdots+r_m=k\\ \sum_{i=1}^m r_i^2\le
l\left(\frac1m+\frac1l\right)^{1/4}}}
\prod_{i=1}^m \binom{a_{ii}}{r_i}
\ge
\left(1-O\left(\left(\frac1m+\frac1l\right)^{1/4}\right)\right)
\binom{\sum_{i=1}^m a_{ii}}{k},
\]
which proves the lemma.
\end{proof}

\begin{proof}[Proof of \Cref{thm: restate of limiting profile}, $m=\omega(1)$ case] 
Let $\tau\in\S_n$ be distributed with respect to $U_{\S_n}$, and let
$\sigma\in\S_n$ be distributed with respect to $\mu_n^t$. By \Cref{prop: transfer to diag}, it suffices to prove
$$d_{TV}(\diag(\sigma),\diag(\tau))=2\Phi\left(\frac{\exp(-c)}{2}\right)-1+o(1).$$
Let $c_0>0$ be another absolute constant, and write
$$k_{c_0,c}:=(1/2+c_0)\exp(-c)\sqrt l,\quad k_{-c_0,c}:=(1/2-c_0)\exp(-c)\sqrt l.$$
We claim that the following holds when $l$ is sufficiently large. For all sequence $(a_{11},\ldots,a_{mm})$ such that
\begin{equation}\label{eq: concentrated regime_growing m}
\left|\sum_{i=1}^m a_{ii}-l\right|\le l^{2/3},
\qquad
\sum_{i=1}^m a_{ii}^2\le l^2\sqrt{\frac1m+\frac1l},
\end{equation}
one has 
\begin{equation}\label{eq: large fixed point_growing m}
\sum_{i=1}^m a_{ii}\ge l+k_{c_0,c}\text{ implies }\mathbf{P}(\diag(\sigma)=(a_{11},\ldots,a_{mm}))\ge \mathbf{P}(\diag(\tau)=(a_{11},\ldots,a_{mm})),
\end{equation}
\begin{equation}\label{eq: small fixed point_growing m}
\sum_{i=1}^m a_{ii}\le l+k_{-c_0,c}\text{ implies }\mathbf{P}(\diag(\sigma)=(a_{11},\ldots,a_{mm}))\le\mathbf{P}(\diag(\tau)=(a_{11},\ldots,a_{mm})).
\end{equation}
We first deal with \eqref{eq: large fixed point_growing m}. It suffices to prove that for all $|k-\exp(-c)\sqrt{l}|\le l^{1/3}$ and $\sum_{i=1}^m a_{ii}\ge l+k_{c_0,c}$ satisfying \eqref{eq: concentrated regime_growing m}, one has
$$\mathbf{P}(\diag(\sigma)=(a_{11},\ldots,a_{mm})\mid |S_t|=k)\ge\mathbf{P}(\diag(\tau)=(a_{11},\ldots,a_{mm})).$$
Following the explicit expression given in \Cref{lem: distribution of X} and removing the overlapping terms, this is equivalent to
$$k!\sum_{r_1+\cdots+r_m=k}\prod_{i=1}^m(l)_{r_i}^{-1}\binom{a_{ii}}{r_i}\ge1.$$
Here, $(l)_{r_i}:=l(l-1)\cdots(l-r_i+1)$ denotes the falling factorial. In particular, when $\sum_{i=1}^m r_i^2\le
l\left(\frac1m+\frac1l\right)^{1/4}$, we have
\begin{align}\label{eq: estimate of falling factorials_growing m}
\begin{split}
\prod_{i=1}^m (l)_{r_i}&=l^k\prod_{i=1}^m (1-1/l)\cdots(1-(r_i-1)/l)\\
&=l^k\exp\left(-O\left(\sum_{i=1}^m\left(1/l+\cdots+(r_i-1)/l\right)\right)\right)\\
&=l^k\exp\left(-O\left(\sum_{i=1}^m r_i^2/l\right)\right)\\
&=l^k(1-o(1)).
\end{split}
\end{align}
Combining the above estimate with \Cref{lem: concentration of St_growing m}, we have
\begin{align}
\begin{split}
k!\sum_{r_1+\cdots+r_m=k}\prod_{i=1}^m(l)_{r_i}^{-1}\binom{a_{ii}}{r_i}&\ge k!\sum_{\substack{r_1+\cdots+r_m=k\\ \sum_{i=1}^m r_i^2\le
l\left(\frac1m+\frac1l\right)^{1/4}}}\prod_{i=1}^m(l)_{r_i}^{-1}\binom{a_{ii}}{r_i}\\
&\ge k!\cdot l^{-k}(1-o(1))\exp\binom{\sum_{i=1}^m a_{ii}}{k}\\
&\ge(1-o(1))\cdot\prod_{i=0}^{k-1}\left(1+\frac{k_{c_0,c}-i}{l}\right)\\
&=(1-o(1))\exp(c_0\exp(-2c))\\
&\ge 1,
\end{split}
\end{align}
which proves \eqref{eq: large fixed point_growing m}. 

We next deal with \eqref{eq: small fixed point_growing m}. It suffices to prove that for all $|k-\exp(-c)\sqrt{l}|\le l^{1/3}$ and $\sum_{i=1}^m a_{ii}\le l+k_{c_0,c}$ satisfying \eqref{eq: concentrated regime_growing m}, one has
$$\mathbf{P}(\diag(\sigma)=(a_{11},\ldots,a_{mm})\mid |S_t|=k)\le\mathbf{P}(\diag(\tau)=(a_{11},\ldots,a_{mm})).$$
Following the explicit expression given in \Cref{lem: distribution of X} and removing the overlapping terms, this is equivalent to
$$k!\sum_{r_1+\cdots+r_m=k}\prod_{i=1}^m(l)_{r_i}^{-1}\binom{a_{ii}}{r_i}\le 1.$$
Observe the order of magnitudes
$$\prod_{i=1}^m(l)_{r_i}^{-1}\le(l)_k^{-1}=O(l^{-k}),\quad k!\sum_{r_1+\cdots+r_m=k}\binom{a_{ii}}{r_i}=\left(\sum_{i=1}^m a_{ii}\right)_k=O(l^k).$$ 
Therefore, by \Cref{lem: concentration of St_growing m}, the contribution from the tail part (i.e., $\sum_{i=1}^m r_i^2>
l\left(\frac1m+\frac1l\right)^{1/4}$) is $O\left(\left(\frac1m+\frac1l\right)^{1/4}\right)$. Thus, following the estimate in \eqref{eq: estimate of falling factorials_growing m}, we have
\begin{align}
\begin{split}
k!\sum_{r_1+\cdots+r_m=k}\prod_{i=1}^m(l)_{r_i}^{-1}\binom{a_{ii}}{r_i}&= k!\sum_{\substack{r_1+\cdots+r_m=k\\\sum_{i=1}^m r_i^2\le
l\left(\frac1m+\frac1l\right)^{1/4}}}\prod_{i=1}^m(l)_{r_i}^{-1}\binom{a_{ii}}{r_i}+O\left(\left(\frac1m+\frac1l\right)^{1/4}\right)\\
&\le k!\cdot l^{-k}\binom{\sum_{i=1}^m a_{ii}}{k}+O\left(\left(\frac1m+\frac1l\right)^{1/4}\right)\\
&\le\prod_{i=0}^{k-1}\left(1+\frac{k_{-c_0,c}-i}{l}\right)+O\left(\left(\frac1m+\frac1l\right)^{1/4}\right)\\
&=(1+o(1))\exp(-c_0\exp(-2c))+O\left(\left(\frac1m+\frac1l\right)^{1/4}\right)\\
&\le 1,
\end{split}
\end{align}
which proves \eqref{eq: small fixed point_growing m}. 

Based on our preparation \eqref{eq: large fixed point_growing m}, \eqref{eq: small fixed point_growing m} and the fact that
$$\mathbf{P}(|F_{n,l}(\tau)-l|>l^{2/3})=o(1),\quad\mathbf{P}(|F_{n,l}(\sigma)-l|>l^{2/3})=o(1),$$
which is ensured by \Cref{lem: fixed point CLT_growing m}, we now decompose $d_{TV}(\diag(\sigma),\diag(\tau))$ as the following:
\begin{align}\label{eq: decompose dTV into three regime_growing m}
\begin{split}
&d_{TV}(\diag(\sigma),\diag(\tau))\\
&=\frac 12\sum_{l+k_{c_0,c}\le\sum_{i=1}^m a_{ii}\le l+l^{2/3}}\mathbf{P}(\diag(\sigma)=(a_{11},\ldots,a_{mm}))-\mathbf{P}(\diag(\tau)=(a_{11},\ldots,a_{mm}))|\\
&+\frac 12\sum_{l-l^{2/3}\le\sum_{i=1}^m a_{ii}\le l+k_{-c_0,c}}|\mathbf{P}(\diag(\sigma)=(a_{11},\ldots,a_{mm}))-\mathbf{P}(\diag(\tau)=(a_{11},\ldots,a_{mm}))|\\
&+\frac 12\sum_{l+k_{-c_0,c}<\sum_{i=1}^m a_{ii}<l+k_{c_0,c}}|\mathbf{P}(\diag(\sigma)=(a_{11},\ldots,a_{mm}))-\mathbf{P}(\diag(\tau)=(a_{11},\ldots,a_{mm}))|\\
&+o(1).
\end{split}
\end{align}
For the regime $l+k_{c_0,c}\le\sum_{i=1}^m a_{ii}\le l+l^{2/3}$, combining \eqref{eq: large fixed point_growing m} with the bound of the probability of the tail event $\sum_{i=1}^m X_{ii}^2>l^2\sqrt{\frac 1m+\frac 1l}$ in \Cref{lem: tail bound square sum_growing m}, we have
\begin{align}\label{eq: large fixed point dTV_growing m}
\begin{split}
&\sum_{l+k_{c_0,c}\le\sum_{i=1}^m a_{ii}\le l+l^{2/3}}|\mathbf{P}(\diag(\sigma)=(a_{11},\ldots,a_{mm}))-\mathbf{P}(\diag(\tau)=(a_{11},\ldots,a_{mm}))|\\
&=\mathbf{P}(l+k_{c_0,c}\le|F_{n,l}(\sigma)|\le l+l^{2/3})-\mathbf{P}(l+k_{c_0,c}\le|F_{n,l}(\tau)|\le l+l^{2/3})+o(1)\\
&=1-\Phi\left(-(1-c_0)\frac{\exp(-c)}{2}\right)-1+\Phi\left((1+c_0)\frac{\exp(-c)}{2}\right)+o(1)\\
&=\Phi\left((1-c_0)\frac{\exp(-c)}{2}\right)+\Phi\left((1+c_0)\frac{\exp(-c)}{2}\right)-1+o(1).
\end{split}
\end{align}
Here, the third line comes from the central limit theorem provided in \Cref{lem: fixed point CLT_growing m}. For the regime $l-l^{2/3}\le\sum_{i=1}^m a_{ii}\le l+k_{-c_0,c}$, combining \eqref{eq: small fixed point_growing m} with the bound of the probability of the tail event $\sum_{i=1}^m X_{ii}^2>l^2\sqrt{\frac 1m+\frac 1l}$ in \Cref{lem: tail bound square sum_growing m}, we have
\begin{align}\label{eq: small fixed point dTV_growing m}
\begin{split}
&\sum_{l-l^{2/3}\le\sum_{i=1}^m a_{ii}\le l+k_{-c_0,c}}|\mathbf{P}(\diag(\sigma)=(a_{11},\ldots,a_{mm}))-\mathbf{P}(\diag(\tau)=(a_{11},\ldots,a_{mm}))|\\
&=\mathbf{P}(l-l^{2/3}\le|F_{n,l}(\tau)|\le l+k_{-c_0,c})-\mathbf{P}(l-l^{2/3}\le|F_{n,l}(\sigma)|\le l+k_{-c_0,c})+o(1)\\
&=\Phi\left((1-c_0)\frac{\exp(-c)}{2}\right)+\Phi\left((1+c_0)\frac{\exp(-c)}{2}\right)-1+o(1).
\end{split}
\end{align}
Here, the third line comes from \Cref{lem: fixed point CLT_growing m}. For the regime $l+k_{-c_0,c}<\sum_{i=1}^m a_{ii}<l+k_{c_0,c}$, we use the crude bound
\begin{align}\label{eq: middle fixed point dTV_growing m}
\begin{split}
&\sum_{l+k_{-c_0,c}<\sum_{i=1}^m a_{ii}<l+k_{c_0,c}}|\mathbf{P}(\diag(\sigma)=(a_{11},\ldots,a_{mm}))-\mathbf{P}(\diag(\tau)=(a_{11},\ldots,a_{mm}))|\\
&\le\mathbf{P}(l+k_{-c_0,c}<|F_{n,l}(\sigma)|< l+k_{c_0,c})+\mathbf{P}(l+k_{-c_0,c}<|F_{n,l}(\tau)|< l+k_{c_0,c})\\
&=2\Phi\left((1+c_0)\frac{\exp(-c)}{2}\right)-2\Phi\left((1-c_0)\frac{\exp(-c)}{2}\right)+o(1).
\end{split}
\end{align}
Here, the third line comes from \Cref{lem: fixed point CLT_growing m}. The estimate \eqref{eq: large fixed point dTV_growing m}, \eqref{eq: small fixed point dTV_growing m}, \eqref{eq: middle fixed point dTV_growing m} and the decomposition \eqref{eq: decompose dTV into three regime_growing m} together imply
\begin{multline}
\left|d_{TV}(\diag(\sigma),\diag(\tau))-
\left(\Phi\left((1-c_0)\frac{\exp(-c)}{2}\right)+\Phi\left((1+c_0)\frac{\exp(-c)}{2}\right)-1\right)\right|\\
\le2\Phi\left((1+c_0)\frac{\exp(-c)}{2}\right)-2\Phi\left((1-c_0)\frac{\exp(-c)}{2}\right)+o(1).
\end{multline}
Since the above holds for arbitrary $c_0>0$, taking $c_0\rightarrow 0$ completes the proof.
\end{proof}

\begin{rmk}
One might wonder whether the total variation distance between the laws of \(\diag(\sigma)\) and \(\diag(\tau)\) is always equal to the total variation distance between the laws of \(|F_{n,l}(\sigma)|\) and \(|F_{n,l}(\tau)|\). This is, however, not true in general. Here, \(\tau\) is sampled uniformly from \(\mathfrak S_n\), whereas \(\sigma\) is sampled by first choosing a random subset of points to be fixed, and then taking a uniformly random permutation on the remaining points.

Indeed, consider the case \(m=l=3\), so that $n=9$. Let \(\tau\) be uniformly distributed over \(\mathfrak S_9\), and let \(\sigma\) be sampled by first choosing two points to fix and then taking a uniformly random permutation on the remaining seven points. Following direct calculation, one can obtain
\[
d_{TV}(\diag(\sigma),\diag(\tau))=\frac{103}{252}\approx 0.409,
\]
whereas
\[
d_{TV}(|F_{9,3}(\sigma)|,|F_{9,3}(\tau)|)=\frac{337}{840}\approx 0.401.
\]
Therefore, the two total variation distances do not coincide.
\end{rmk}

\bibliographystyle{plain}
\bibliography{references.bib}

\end{document}